\documentclass[11pt]{amsart}

\usepackage{amsfonts, amstext, amsmath, amsthm, amscd, amssymb} 
\usepackage{mathtools}

\usepackage{float} 
\usepackage{graphics}
\usepackage{caption}
\usepackage{subcaption}
\usepackage{import}

\usepackage{physics}  
\usepackage{microtype}

\usepackage[hidelinks,pagebackref,pdftex]{hyperref}

\usepackage[dvipsnames]{xcolor}
\definecolor{MyCyan}{HTML}{00F9DE}
\usepackage{enumitem} 
\setlist{nolistsep,leftmargin=*}
\usepackage{transparent} 
\usepackage{mathrsfs}

\usepackage{marginnote}
\long\def\@savemarbox#1#2{\global\setbox#1\vtop{\hsize\marginparwidth 
  \@parboxrestore\tiny\raggedright #2}}
\newcommand{\ZZ}{\mathbb{Z}}  % Integers
\newcommand{\calP}{\mathcal{P}}
\newcommand{\calL}{\mathcal{L}}

\renewcommand{\setminus}{{\smallsetminus}}

\newcommand{\from}{\colon\thinspace} % as in f \from X \to Y

\newtheorem{theorem}{Theorem}[section]
\newtheorem*{theorem*}{Theorem}
\newtheorem{proposition}[theorem]{Proposition}
\newtheorem{lemma}[theorem]{Lemma}
\newtheorem{corollary}[theorem]{Corollary}

\newtheorem*{namedtheorem}{\theoremname}
\newcommand{\theoremname}{testing}
\newenvironment{named}[1]{\renewcommand{\theoremname}{#1}\begin{namedtheorem}}{\end{namedtheorem}}

\theoremstyle{definition}
\newtheorem{definition}[theorem]{Definition}

\newcommand{\refthm}[1]{Theorem~\ref{Thm:#1}}

\newcommand{\refprop}[1]{Proposition~\ref{Prop:#1}}
\newcommand{\refcor}[1]{Corollary~\ref{Cor:#1}}

\newcommand{\refdef}[1]{Definition~\ref{Def:#1}}
\newcommand{\refsec}[1]{Section~\ref{Sec:#1}}
\newcommand{\reffig}[1]{Figure~\ref{Fig:#1}}

\title{The Lorenz braid index and hyperbolic volume} 

\author{Thiago de Paiva}
\address[]{Beijing International Center for Mathematical Research, Peking University, Beijing 100871, China P.R.}
\email[]{thhiagodepaiva@gmail.com}

\author[Connie Hui]{Connie On Yu Hui} 
\address[]{Sydney Mathematical Research Institute (SMRI), The University of Sydney, NSW 2006, Australia } 
\email[]{connieonyuhui@gmail.com | connie.hui@sydney.edu.au} 

\author{Jos\'{e} Andr\'{e}s Rodr\'{\i}guez Migueles}
\address[]{Centro de Investigaci\'on en Matem\'aticas, GTO 36023, Mexico}
\email[] {jose.migueles@cimat.mx} 

\subjclass[2020]{57K10, 57K32 (primary); 37D40, 37E35 (secondary).}  

\begin{document} 
\begin{abstract}
A result of Futer, Kalfagianni, and Purcell implies that an upper volume bound for all link complements in the 3-sphere cannot depend solely on the braid index. In this paper, we introduce the Lorenz braid index and generalise the bunch algorithm to provide a general upper volume bound for all link complements in the 3-sphere. Such an upper bound is a quadratic polynomial in the Lorenz braid index. In addition, we construct an explicit family of hyperbolic Lorenz knots for which the classical braid index and the Seifert genus both tend
to infinity, while the Lorenz braid index remains bounded.
\end{abstract} 

\maketitle 
 
\section{Introduction}
A central theme in 3-manifold topology is the search for relationships between topological and geometric invariants of
3-manifolds such as knot or link complements. For a hyperbolic link \(L \subset S^3\),
a fundamental
geometric invariant is the hyperbolic volume of its complement. On the other
hand, many natural measures of link complexity are diagrammatic or topological, e.g. crossing number, twist number, bridge number, braid index, and genus. A widely-studied problem is to find out which of these quantities estimate hyperbolic volume.  

Some known upper or lower bounds for hyperbolic volumes depend only on the twist number of a given link diagram. For instance, Agol and D.~Thurston showed that the volume of a hyperbolic 
link can be bounded above linearly in terms of the twist number of a diagram, by improving a result of  Lackenby~\cite{Lackenby}. Moreover, Futer, Kalfagianni, and  Purcell gave a lower bound linearly in terms of the twist number of a prime, twist–reduced diagram for any hyperbolic link with at least 7 crossings in each twist region \cite{FKPDehnFillingJones}. Nevertheless, for general knots, the twist number does not provide two–sided bounds on the volume \cite{FKPDiagrammaticBounds}. Related diagrammatic estimates have been obtained for several structured families of links, including highly twisted links~\cite{PurcellHighlyTwisted}. Such bounds, however,
depend on a chosen diagram. 
By contrast, the braid index is an intrinsic invariant of the link. It is therefore natural to ask whether braid index carries meaningful geometric information.

The work of Purcell and Zupan~\cite{PurcellZupan} shows that classical
topological complexity need not force hyperbolic volume to grow. More
precisely, for every \(g\geq0\), they constructed a sequence of knots with
uniformly bounded volumes and genus-\(g\) bridge number tending to infinity.
Note that genus-$0$ bridge number is the classical bridge number, and the classical bridge number is always smaller than or equal to the braid index of the same link (consider the minimal braid diagram of a link, the number of bridges is smaller than or equal to the number of braids). Thus, the classical braid indices of these examples also tend to infinity,
while their volumes remain uniformly bounded. 

On the other hand, the general upper bound for volumes of all hyperbolic link complements cannot depend solely on the classical braid index because there exist a sequence of hyperbolic links with braid index equal to three such that their hyperbolic volumes grow to infinity~(see~\cite[Theorem~5.6]{Futer-Kalfagianni-Purcell:CuspAreasFareyMfds} and \cite[Theorem~1]{Birman-Menasco:ANoteOnClosed3Braids}). 

Thus, the classical braid index does not always give good estimate for hyperbolic volumes:
large braid index does not necessarily give us
large volume; and large volume does not necessarily give us large braid index. However, this does not preclude the possibility that other braid-theoretic quantities can give us good estimate.

In this paper, we introduce the \textit{Lorenz braid index}, which can provide a general upper bound for the volumes of (the hyperbolic pieces of) each link complements in the $3$-sphere~$S^3$. 

Recall that the braid group
\(B_m\) is generated by the standard generators
\[
\sigma_1,\ldots,\sigma_{m-1},
\]
where \(\sigma_i\) represents a positive crossing between the \(i\)-th and
\((i+1)\)-th strands. A braid is positive if it is represented by a word in
these generators with only positive exponents. By the theorem of Birman and
Kofman~\cite{Birman-Kofman:NewTwistOnLorenzLinks}, Lorenz links are precisely T-links, that is,
closures of positive braids of the form
\[
(\sigma_1\cdots\sigma_{r_1-1})^{s_1}
\cdots
(\sigma_1\cdots\sigma_{r_k-1})^{s_k},
\]
where
\[
2\leq r_1<\cdots<r_k
\qquad\text{and}\qquad
s_1,\ldots,s_k>0.
\]

Generalised T-links extend this class by allowing the last exponent to be negative or positive, together with a possible
shift of the strands. They form a universal class: every link in \(S^3\)
admits a representation as a generalised T-link~\cite{dePaiva-Hui-Rodriguez:GeneralisedTLinks}. Such a representation
has the form
\[
L=
T((r_1,s_1),\ldots,(r_k,s_k),(r_{k+1},s_{k+1}),d).
\]
For any integer \(n\) satisfying
\[
s_{k+1}\geq nr_{k+1},
\]
we remove \(n\) full twists from the final block and obtain the Lorenz link
\[
L_n^+
=
T((r_1,s_1),\ldots,(r_k,s_k),
(r_{k+1},s_{k+1}-nr_{k+1})),
\]
where the final block is omitted if \(s_{k+1}-nr_{k+1}=0\). We call $L_n^+$ an 
\textit{associated Lorenz link} of the chosen generalised T-link
representation $L$. 

We denote by \(\operatorname{br}(J)\) the braid index of a link \(J\), that
is, the minimum number of strands among all closed braid representatives of
\(J\). For an arbitrary link this invariant is often difficult to compute. However, for Lorenz links, the braid index is the same as its trip number in the classical Lorenz template (see \cite{Franks-Williams:BraidsAndJonesPolynomial, Birman-Williams:KnottedPeriodicOrbitsInDynamicalSystemsI, Birman-Kofman:NewTwistOnLorenzLinks}), which is intuitively speaking, the number of trips that the link travels in the left and right ears of the Lorenz template (\reffig{LorenzTemplate}, left).  The trip number can be easily retrieved from the code word of the Lorenz link embedded in the Lorenz template. See \reffig{LorenzTemplate}, right.

\begin{figure}
% left:
%% Creator: Inkscape 1.1.2 (b8e25be833, 2022-02-05), www.inkscape.org
%% PDF/EPS/PS + LaTeX output extension by Johan Engelen, 2010
%% Accompanies image file 'LorenzTemplate.pdf' (pdf, eps, ps)
%%
%% To include the image in your LaTeX document, write
%%   \input{<filename>.pdf_tex}
%%  instead of
%%   \includegraphics{<filename>.pdf}
%% To scale the image, write
%%   \def\svgwidth{<desired width>}
%%   \input{<filename>.pdf_tex}
%%  instead of
%%   \includegraphics[width=<desired width>]{<filename>.pdf}
%%
%% Images with a different path to the parent latex file can
%% be accessed with the `import' package (which may need to be
%% installed) using
%%   \usepackage{import}
%% in the preamble, and then including the image with
%%   \import{<path to file>}{<filename>.pdf_tex}
%% Alternatively, one can specify
%%   \graphicspath{{<path to file>/}}
%% 
%% For more information, please see info/svg-inkscape on CTAN:
%%   http://tug.ctan.org/tex-archive/info/svg-inkscape
%%
\begingroup%
  \makeatletter%
  \providecommand\color[2][]{%
    \errmessage{(Inkscape) Color is used for the text in Inkscape, but the package 'color.sty' is not loaded}%
    \renewcommand\color[2][]{}%
  }%
  \providecommand\transparent[1]{%
    \errmessage{(Inkscape) Transparency is used (non-zero) for the text in Inkscape, but the package 'transparent.sty' is not loaded}%
    \renewcommand\transparent[1]{}%
  }%
  \providecommand\rotatebox[2]{#2}%
  \newcommand*\fsize{\dimexpr\f@size pt\relax}%
  \newcommand*\lineheight[1]{\fontsize{\fsize}{#1\fsize}\selectfont}%
  \ifx\svgwidth\undefined%
    \setlength{\unitlength}{141.44883994bp}%
    \ifx\svgscale\undefined%
      \relax%
    \else%
      \setlength{\unitlength}{\unitlength * \real{\svgscale}}%
    \fi%
  \else%
    \setlength{\unitlength}{\svgwidth}%
  \fi%
  \global\let\svgwidth\undefined%
  \global\let\svgscale\undefined%
  \makeatother%
  \begin{picture}(1,0.46887706)%
    \lineheight{1}%
    \setlength\tabcolsep{0pt}%
    \put(0,0){\includegraphics[width=\unitlength,page=1]{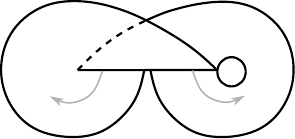}}%
    \put(0.24443091,0.07011143){\color[rgb]{0.6,0.6,0.6}\makebox(0,0)[lt]{\lineheight{1.25}\smash{\begin{tabular}[t]{l}$x$\end{tabular}}}}%
    \put(0.68548454,0.07368241){\color[rgb]{0.6,0.6,0.6}\makebox(0,0)[lt]{\lineheight{1.25}\smash{\begin{tabular}[t]{l}$y$\end{tabular}}}}%
    \put(0,0){\includegraphics[width=\unitlength,page=2]{LorenzTemplate.pdf}}%
  \end{picture}%
\endgroup%

\hspace{16mm} 
% right: 
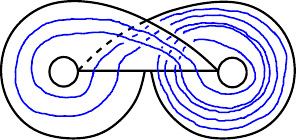

\caption{Left: Lorenz template. Right: The Lorenz link represented by the code word $x^1 y^2 x^1 y^3$, which has trip number two. Note that $x^1 y^2$ can be seen as one ``trip'' and $x^1 y^3$ can be seen as another ``trip''.
\label{Fig:LorenzTemplate}}
\end{figure}

Apart from showing that Lorenz links are precisely T-links in~\cite{Birman-Kofman:NewTwistOnLorenzLinks}, Birman and Kofman also provided a formula for computing braid index of T-links from the $T$-link parameters in~\cite[Corollary~8]{Birman-Kofman:NewTwistOnLorenzLinks}. 

Since the braid index of Lorenz links and T-links are easily computable, this is another reason why we defined the associated Lorenz links of generalised T-links to provide upper volume bounds for all link complements in $S^3$. For each admissible choice of \(n\), the link \(L_n^+\) is a classical Lorenz link (or equivalently, T-link), and therefore
\(\operatorname{br}(L_n^+)\) is a computable quantity determined by the
associated positive T-link parameters. We define the \emph{Lorenz braid index} of
\(L\), denoted by \(\operatorname{lbi}(L)\), to be the minimum of these
braid indices over all generalised T-link representations of \(L\) and
all admissible choices of \(n\). Thus \(\operatorname{lbi}(L)\) is an invariant of the link \(L\), rather
than a complexity attached to a particular representative. Our main result
shows that there is an upper bound for the sum of volumes of all hyperbolic pieces of $S^3\setminus L$ that is quadratic in terms of the Lorenz braid index: 

\begin{named}{\refthm{VolumeLorenzBraidIndex}} 
Let \(L\subset S^3\) be a link, and let \(\operatorname{Vol}(L)\) denote the
sum of the volumes of the hyperbolic pieces in the JSJ decomposition of
\(S^3\setminus L\). Then
\[
\operatorname{Vol}(L)
\leq
12v_{\mathrm{tet}}
\bigl(
\operatorname{lbi}(L)^2
+
3\operatorname{lbi}(L)
\bigr),
\]
where \(v_{\mathrm{tet}}\) denotes the volume of the regular ideal
tetrahedron.
\end{named} 

The Lorenz braid index is a braid-theoretic complexity different from the classical braid index. The classical braid index measures the minimum number of strands needed to realise a link as a closed braid.
The Lorenz braid index instead measures the braid index of the associated
positive Lorenz link obtained from a generalised T-link representation,
after removing the relevant full twists.  

This distinction is geometrically meaningful. Large amounts of twisting in a fixed region can often be realised by Dehn filling on a fixed parent link, and Dehn filling does not increase simplicial volume. Thus, such twisting may increase classical braid-theoretic complexity without contributing to volume growth. The Lorenz braid index is designed to measure the remaining Lorenz complexity after this twisting has been separated off, and our main theorem shows that this quantity is sufficient to bound the total hyperbolic volume from above.

To show that this distinction is significant, we construct an explicit family of hyperbolic Lorenz knots for which the classical braid index and the Seifert genus both tend to infinity, while the Lorenz braid index remains uniformly bounded. By the volume bound above, the volumes of these knot complements are therefore uniformly bounded.

\begin{named}{Theorem~\ref{thm:bounded-volume-unbounded-braid}} 
There exists a sequence of hyperbolic Lorenz knots \(K_j\subset S^3\) such
that
\[
\operatorname{br}(K_j)\to\infty
\qquad\text{and}\qquad
g(K_j)\to\infty,
\]
while the Lorenz braid indices \(\operatorname{lbi}(K_j)\) are uniformly
bounded. Consequently, the volumes of \(S^3\setminus K_j\) are uniformly
bounded.
\end{named} 

This family gives a concrete Lorenz-knot realisation of the principle
that classical topological complexity may grow without forcing volume
growth, in the spirit of the independence between volume and genus-\(g\)
bridge numbers established by Purcell and Zupan~\cite{PurcellZupan}. In our
examples, the classical braid index and the Seifert genus tend to infinity,
whereas the associated Lorenz links have uniformly bounded braid index.
Thus, in this case, the Lorenz braid index detects the bounded geometric complexity
responsible for the volume bound, while the classical braid index and genus
do not.  

The proof of the main volume bound has two main ingredients. The first is a
generalisation of the bunch algorithm~\cite{Hui-Rodriguez:BunchesUpperVolume} of the last two authors from Lorenz templates to Lorenz-like templates. This construction associates to any link embedded in such a template a parent manifold whose simplicial volume is bounded above quadratically in terms of the trip number. Since the original link complement is obtained from this parent manifold by Dehn filling, and Dehn filling does not increase simplicial volume, this gives a quadratic upper bound for the sum of the volumes of the hyperbolic pieces of the original link complement.

The second ingredient is the passage from generalised T-links to their
associated Lorenz links. If the generalised T-link has negative twist(s) in the last block, we can obtain an associated Lorenz link by adding some positive full twists. While the braid index of the generalised T-links may be hard to compute, any of its associated Lorenz link has braid index that is easily computable. Minimising this braid index over all generalised T-link representations
and all admissible choices of full twists gives the Lorenz braid index. By combining the two ingredients, we obtain the stated quadratic volume bound.

The paper is organised as follows. In
Section~\ref{Sec:Bunch&Templates}, we generalised the bunch algorithm in~\cite{Hui-Rodriguez:BunchesUpperVolume} for constructing links embedded in Lorenz-like templates. In Section~\ref{Sec:VolBdForAllLinks}, we use the generalised bunch algorithm to construct parent manifolds for links embedded in Lorenz-like templates and prove a volume bound in terms of trip number. In Section~\ref{Sec:GeneralisedTLinks}, we relate this template-theoretic quantity to generalised T-links: we define associated Lorenz links, introduce the Lorenz braid index, and prove that it agrees with the minimal trip number. This gives the main volume bound in terms of the Lorenz braid index. Finally, in Section~\ref{Sec:BoundedVolumeUnboundedBraid}, we construct a sequence of hyperbolic Lorenz knots with unbounded classical braid index and Seifert genus, but bounded Lorenz braid index and volume.

\subsection*{Acknowledgements}
We thank Michel Boileau for helpful conversations.

\section{The generalised bunch algorithm and Lorenz-like templates} \label{Sec:Bunch&Templates}  

A \emph{template} is a compact branched surface with nonempty boundary.  Such a two-dimensional object is associated with a semiflow constructed locally from the joining and splitting charts as shown in \cite[Figure~2.4]{Ghrist-Holmes-Sullivan:KnotsALinksInThreeDimFlows}.  A fundamental example of templates is the \emph{Lorenz template}, which is a geometric model introduced by Guckenheimer and Williams for studying Lorenz flow~\cite{Guckenheimer-Williams:StructuralStabilityOfLorenzAttractors,Williams:StructureOfLorenzAttractors}.  Tucker justified this model later in~\cite{Tucker:Smale14thProblem}. Lorenz links can thus be viewed as links embedded in the Lorenz template, which can be studied using symbolic dynamics (see \cite{Williams:StructureOfLorenzAttractors, Birman-Williams:KnottedPeriodicOrbitsInDynamicalSystemsI}). 

%In \cite{Hui-Rodriguez:BunchesUpperVolume}, the authors developed the bunch algorithm for constructing  Lorenz links in the Lorenz template when a code word is given. 

In \cite{Hui-Rodriguez:BunchesUpperVolume}, the bunch algorithm was developed to construct Lorenz links in the Lorenz template when a code word is provided. The bunch algorithm provided the an efficient way of building a parent manifold and finding an upper volume bound for Lorenz link complements. Such an upper bound is independent of the word exponents and is quadratic in the trip number, which is the same as the braid index of the Lorenz link \cite{Franks-Williams:BraidsAndJonesPolynomial, Birman-Williams:KnottedPeriodicOrbitsInDynamicalSystemsI}. It turns out that the bunch algorithm can be generalised to infinitely many \emph{universal} templates, which are templates embedded in the $3$-sphere that contain a representative of each ambient isotopy class of links in the $3$-sphere (see also \cite[Definition 3.2.15]{Ghrist-Holmes-Sullivan:KnotsALinksInThreeDimFlows}).  We will explain more in the following. 

\begin{definition}\label{Def:LOrenzLikeTemplate}
    Let $u$, $v$ be integers. The \emph{$(u,v)$-Lorenz-like template $\mathscr{L}(u,v)$} is a template obtained by cutting along the branch line of the Lorenz template, and then gluing the left strip back with \(u\) half twists, and gluing the right strip back with \(v\) half twists.  \reffig{LorenzLikeTemplate} shows an example of Lorenz-like template with positive half twists. Negative half twists can be obtained by switching the crossings. 

    A link embedded in $\mathscr{L}(u,v)$ is called a \emph{$(u,v)$-Lorenz-like link}, or simply \emph{Lorenz-like link} when the context is clear.  
\end{definition} 

\begin{figure}
    \centering
    %% Creator: Inkscape 1.1.2 (b8e25be833, 2022-02-05), www.inkscape.org
%% PDF/EPS/PS + LaTeX output extension by Johan Engelen, 2010
%% Accompanies image file 'LorenzLikeTemplate.pdf' (pdf, eps, ps)
%%
%% To include the image in your LaTeX document, write
%%   \input{<filename>.pdf_tex}
%%  instead of
%%   \includegraphics{<filename>.pdf}
%% To scale the image, write
%%   \def\svgwidth{<desired width>}
%%   \input{<filename>.pdf_tex}
%%  instead of
%%   \includegraphics[width=<desired width>]{<filename>.pdf}
%%
%% Images with a different path to the parent latex file can
%% be accessed with the `import' package (which may need to be
%% installed) using
%%   \usepackage{import}
%% in the preamble, and then including the image with
%%   \import{<path to file>}{<filename>.pdf_tex}
%% Alternatively, one can specify
%%   \graphicspath{{<path to file>/}}
%% 
%% For more information, please see info/svg-inkscape on CTAN:
%%   http://tug.ctan.org/tex-archive/info/svg-inkscape
%%
\begingroup%
  \makeatletter%
  \providecommand\color[2][]{%
    \errmessage{(Inkscape) Color is used for the text in Inkscape, but the package 'color.sty' is not loaded}%
    \renewcommand\color[2][]{}%
  }%
  \providecommand\transparent[1]{%
    \errmessage{(Inkscape) Transparency is used (non-zero) for the text in Inkscape, but the package 'transparent.sty' is not loaded}%
    \renewcommand\transparent[1]{}%
  }%
  \providecommand\rotatebox[2]{#2}%
  \newcommand*\fsize{\dimexpr\f@size pt\relax}%
  \newcommand*\lineheight[1]{\fontsize{\fsize}{#1\fsize}\selectfont}%
  \ifx\svgwidth\undefined%
    \setlength{\unitlength}{144.61663982bp}%
    \ifx\svgscale\undefined%
      \relax%
    \else%
      \setlength{\unitlength}{\unitlength * \real{\svgscale}}%
    \fi%
  \else%
    \setlength{\unitlength}{\svgwidth}%
  \fi%
  \global\let\svgwidth\undefined%
  \global\let\svgscale\undefined%
  \makeatother%
  \begin{picture}(1,0.7315248)%
    \lineheight{1}%
    \setlength\tabcolsep{0pt}%
    \put(0,0){\includegraphics[width=\unitlength,page=1]{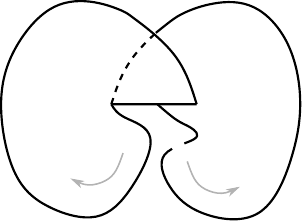}}%
    \put(0.3075093,0.06912099){\color[rgb]{0.6,0.6,0.6}\makebox(0,0)[lt]{\lineheight{1.25}\smash{\begin{tabular}[t]{l}$x$\end{tabular}}}}%
    \put(0.67403551,0.04283614){\color[rgb]{0.6,0.6,0.6}\makebox(0,0)[lt]{\lineheight{1.25}\smash{\begin{tabular}[t]{l}$y$\end{tabular}}}}%
    \put(0,0){\includegraphics[width=\unitlength,page=2]{LorenzLikeTemplate.pdf}}%
  \end{picture}%
\endgroup%
    \caption{Lorenz-like template $\mathscr{L}(1,2)$}
    \label{Fig:LorenzLikeTemplate}
\end{figure} 

\begin{proposition}[\cite{Ghrist:BranchedTwoMfdsSupportingAllLinks} and Proposition~3.2.16 in \cite{Ghrist-Holmes-Sullivan:KnotsALinksInThreeDimFlows}]\label{Prop:UniversalTemplates}
If $v$ is a negative integer, then the Lorenz-like template $\mathscr{L}(0,v)$ is universal. 
\end{proposition}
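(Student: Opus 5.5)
The plan is to reduce the statement to Ghrist's theorem that the template $\mathscr{L}(0,-1)$ is universal, and then to pass from $v=-1$ to an arbitrary negative $v$ by showing that $\mathscr{L}(0,-1)$ embeds as a subtemplate of $\mathscr{L}(0,v)$. Universality passes to any template that contains a universal subtemplate, provided the embedding is compatible with the ambient embeddings in $S^3$. The reason is that every link type is then realised by periodic orbits of the subtemplate, and these are periodic orbits of the larger template.

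\textbf{Base case $v=-1$.} I would follow Ghrist's argument in \cite{Ghrist:BranchedTwoMfdsSupportingAllLinks}. First, write an arbitrary link as a closed braid. Next, use the fact that every braid word is conjugate to one in which each negative crossing $\sigma_i^{-1}$ is produced by the half twist of the right strip. Concretely, $\mathscr{L}(0,-1)$ contains a subtemplate consisting of a positive Lorenz portion (which carries all positive braids, since Lorenz links are exactly the T-links) together with a "negative crossing" portion carried along the twisted strip. One then shows that suitable words in the symbolic dynamics, meaning words with controlled visits to the right strip, realise both positive and negative generators. The key tool is renormalisation: one exhibits a subtemplate of $\mathscr{L}(0,-1)$ that is isotopic to a copy of $\mathscr{L}(0,-1)$ with extra twisting, so that arbitrarily many crossings of either sign can be inserted.

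\textbf{General $v<0$.} I would look for an explicit subtemplate of $\mathscr{L}(0,v)$ made of orbits that pass through the right strip in a specified pattern, and show that it is ambient-isotopic to $\mathscr{L}(0,-1)$, or to some $\mathscr{L}(0,v')$ with $v'$ odd and negative. Orbits with code $y^k$ traverse the right strip $k$ times, and each traversal picks up the $v$ half twists. By choosing the branch of the subtemplate that passes through the strip once, and possibly folding it, the effective twisting can be adjusted. This would let me reduce by induction on $|v|$, for example by showing that $\mathscr{L}(0,v)$ contains $\mathscr{L}(0,v+2)$ or a universal template already known from the literature. Alternatively, I would simply cite Proposition~3.2.16 of \cite{Ghrist-Holmes-Sullivan:KnotsALinksInThreeDimFlows}, which treats all negative $v$ directly by the same renormalisation.

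\textbf{Main obstacle.} The hardest step is the construction in the base case: finding the self-similar subtemplate and checking, by isotopy of the branched surface, that it realises a negative crossing between adjacent strands without disturbing the other strands. I would first draw the subtemplate generated by the words $x$, $xy$ and $y$, and then verify the required isotopy pictorially.
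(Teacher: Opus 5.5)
The paper gives no argument for this proposition: it quotes Ghrist and Proposition~3.2.16 of Ghrist--Holmes--Sullivan. So your fallback of citing that result is exactly the paper's route. Your self-contained outline, however, has genuine gaps. In the base case you say the Lorenz part of $\mathscr{L}(0,-1)$ carries all positive braids because Lorenz links are T-links. This is false. T-links are a proper subclass of closures of positive braids: Lorenz knots are prime (Williams), whereas the closure of the positive braid $\sigma_1^3\sigma_2^3$ is the granny knot. So positive crossings do not come for free, and you cannot simply add the negative ones afterwards. Your claim that every braid word is conjugate to one whose negative crossings are all ``produced by the half twist of the right strip'' is not a known fact, and you do not justify it. Realising arbitrary words in the $\sigma_i^{\pm1}$ is precisely Ghrist's renormalisation construction, which you leave to be checked pictorially. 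In effect your base case is still a citation.

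The passage from general $v$ to $v=-1$ has a more concrete defect. An induction via $\mathscr{L}(0,v)\supset\mathscr{L}(0,v+2)$ preserves the parity of $v$. For even $v$ it therefore terminates at $\mathscr{L}(0,0)$, and even $v$ is the only case the paper uses, namely $\mathscr{L}(0,2m)$ with $m<0$. But $\mathscr{L}(0,0)$ is the Lorenz template, which is not universal: it carries only closures of positive braids, which are fibred, so it misses, e.g., the knot $5_2$. You would need a parity-changing subtemplate of $\mathscr{L}(0,2m)$ that is ambient isotopic to $\mathscr{L}(0,-1)$, or to another template already known to be universal. ``Choosing a branch and possibly folding it'' does not produce one. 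The twisting of a renormalised strip is not determined by counting traversals of the right strip times $v$: the way the sub-strips cross near the branch line also contributes and must be computed. The left strip of the subtemplate may also acquire twisting, so you need not land in a template of the form $\mathscr{L}(0,\cdot)$ at all. Until such a subtemplate is exhibited, the even case rests solely on the citation.
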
 

Take $\mathscr{L}(0,-2)$ as an example. \refprop{UniversalTemplates} implies that every link in the $3$-sphere can be ambient-isotoped to a link embedded in $\mathscr{L}(0,-2)$.

Just like code words of modular links (see \cite{Birman-Williams:KnottedPeriodicOrbitsInDynamicalSystemsI} and \cite[Definition~3.1~(4)]{Hui-Rodriguez:BunchesUpperVolume} for more details), which are equivalent to Lorenz links, we have code words and labelled code words for Lorenz-like links using the symbols $x$ and $y$ as shown in \reffig{LorenzLikeTemplate}. 

Starting form here, we will use terminologies and notations similar to those in \cite{Hui-Rodriguez:BunchesUpperVolume} to state and prove the generalised bunch algorithm.  

Let $L$ be a Lorenz-like link with $c$ link components. 
We denote the corresponding $c$ labelled code words for $L$ by the following: 

\begin{align*}  \label{Words} 
\begin{dcases}
    w_1 &= x_{1}^{k_{1}} y_{1}^{l_{1}} x_{2}^{k_{2}} y_{2}^{l_{2}} \ldots x_{n_1}^{k_{n_1}} y_{n_1}^{l_{n_1}}, \\ 
w_2 &= x_{n_1+1}^{k_{n_1+1}} y_{n_1+1}^{l_{n_1+1}} x_{n_1+2}^{k_{n_1+2}} y_{n_1+2}^{l_{n_1+2}} \ldots x_{n_1+n_2}^{k_{n_1+n_2}} y_{n_1+n_2}^{l_{n_1+n_2}},  \\ 
&\ldots \\ 
w_c &= x_{\Sigma(c) + 1}^{k_{\Sigma(c) + 1}} y_{\Sigma(c) + 1}^{l_{\Sigma(c) + 1}} x_{\Sigma(c) + 2}^{k_{\Sigma(c) + 2}} y_{\Sigma(c) + 2}^{l_{\Sigma(c) + 2}} \ldots x_{\overline{n}}^{k_{\overline{n}}} y_{\overline{n}}^{l_{\overline{n}}}
\end{dcases} \tag{*}
\end{align*} 
The notation $\Sigma(c)$ represents the sum $\Sigma_{i=1}^{c-1} n_i$ of the word periods (i.e. trip numbers of the link components) for words $w_1, \ldots, w_{c-1}$ if $c>1$.  The symbol $\overline{n}$ denotes the sum of all word periods for $L$, that is, $\overline{n} = \Sigma(c+1) = \Sigma_{i=1}^{c} n_i$. Note that $\overline{n}$ is the trip number of the link $L$ in a Lorenz-like template $\mathscr L(u,v)$. For clarity purposes, from now on we will denote $\overline{n}$ by 
$$\tau_{\mathscr L(u,v)}(L)$$ unless otherwise specified. By abuse of notation, the $L$ in the bracket means a certain template representative of $L$ instead of the ambient isotopy class of $L$. 

Similar to the explanation in the caption of Figure~\ref{Fig:LorenzTemplate} (right), we can view each trip as the arc corresponding to each subword $x_i^{k_i} y_i^{l_i}$.

\begin{theorem}[Bunches in Lorenz-like links]\label{Thm:UnionBunchesOfTurns_Gen}
    Let $u$, $v$ be even integers. 
    Each $(u,v)$-Lorenz-like link $L$ with labelled code words in (\ref{Words}) is ambient isotopic to a union of bunches of turns that are ordered from left to right in the split template according to the order of turns listed in each bunch, and the union of bunches satisfy both conditions below:
\begin{enumerate}
    \item For any $i\in [1,\overline{n}]\cap\ZZ$, the $x_i^{k_i}$-arc starts from a point in the $-{k_i}^{\textup{th}}$ interval. 
    \item For any $j\in [1,\overline{n}]\cap\ZZ$, the $y_j^{l_j}$-arc starts from a point in the $+{l_j}^{\textup{th}}$ interval. 
\end{enumerate}
\end{theorem}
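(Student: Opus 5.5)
The plan is to reduce to the Lorenz case, which is the bunch theorem of \cite{Hui-Rodriguez:BunchesUpperVolume}, by showing that the even twisting on the two strips does not affect the combinatorics that the theorem records. Cut $\mathscr{L}(u,v)$ along the branch line to get the split template. It consists of the branch line, subdivided into intervals as in the Lorenz case: the negatively indexed intervals on the $x$-side and the positively indexed ones on the $y$-side. Above it sit the two strips, which are now reglued with $u$ and $v$ half twists respectively. Since $u$ and $v$ are even, each strip carries an integral number $u/2$ (resp. $v/2$) of full twists. A full twist of a strip returns every point of the branch line to its original position, so the first return map of the semiflow on the branch line is the same as for the Lorenz template. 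In particular, the $x$/$y$ symbolic dynamics is unchanged: the code words (\ref{Words}) determine the points where the orbit meets the branch line, and the order of these points along the branch line, exactly as in the Lorenz template.

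Concretely, I would fix an orientation-preserving homeomorphism from the split Lorenz template onto the split $\mathscr{L}(u,v)$. It is the identity near the branch line, and on each strip it is the map that twists the strip as it runs around. Because the twisting is by full twists, the endpoints of each strip match the Lorenz ones. This identifies arcs in the two split templates. Under this identification:
\begin{enumerate}
\item each $x_i^{k_i}$-arc of $L$ corresponds to the $x_i^{k_i}$-arc of the Lorenz link with the same labelled code words;
\item likewise each $y_j^{l_j}$-arc corresponds to the $y_j^{l_j}$-arc;
\item the starting intervals correspond, since the branch line is fixed pointwise.
\end{enumerate}

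Next I would rerun the construction of the bunch algorithm from \cite{Hui-Rodriguez:BunchesUpperVolume}. There one isotopes the Lorenz link within a neighbourhood of the template into a union of bunches of turns: the arcs $x_i^{k_i}$ and $y_j^{l_j}$ are grouped according to their exponents and ordered left to right. The isotopy slides the turns of each arc along the strip and relies only on three things: the order of the points on the branch line, which ears the arcs pass through, and the exponents. Each of these is preserved by the identification above. The only new feature is the crossings coming from the twisted strips, and each strip's full twists act as a single twist on the whole bundle of strands passing through that strip. A full twist is central in the braid group, so it commutes past all the sliding moves. Hence the same sequence of moves gives an ambient isotopy in $S^3$ from $L$ to a union of bunches in the split $\mathscr{L}(u,v)$, in which a full twist box is inserted on each strip. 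Conditions (1) and (2) then follow verbatim from the Lorenz case: an $x_i^{k_i}$-arc makes $k_i$ turns in the left ear and so starts in the $-k_i^{\textup{th}}$ interval, and symmetrically for $y$.

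I expect the main obstacle to be making the "full twists commute past the moves" step rigorous. The bunch isotopy moves arcs across one another near the branch line, while the twist region sits inside the strips. I would handle this by first pushing all twisting into a small collar of each strip, right next to where it is glued back. Inside that collar every strand of the strip passes through, so the twist is a full twist on all strands of that strip. I would then check that the moves of the Lorenz bunch algorithm can be performed away from this collar. The parity hypothesis is essential here: with odd $u$ or $v$ the strip would be reglued with a reflection, the left/right order on the branch line would reverse, and the interval indices would no longer match.
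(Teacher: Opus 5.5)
Your proposal is correct and follows the paper's own argument: since $u/2$ and $v/2$ are integers, the split template and the labelled code words coincide with those of the Lorenz template, so the statement follows from the proof of \cite[Theorem~3.7]{Hui-Rodriguez:BunchesUpperVolume}. Your collar and homeomorphism discussion spells out what the paper leaves implicit: the Lorenz bunching isotopy is supported in a neighbourhood of the template, so it carries over unchanged when the full twists are put back on the strips.
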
 

\begin{proof}
    Since $u$, $v$ are even integers, $\frac{u}{2}$ and $\frac{v}{2}$ are integers. There are $\frac{u}{2}$ full twists in the left strip of the $(u,v)$-Lorenz-like template and $\frac{v}{2}$ full twists in the right strip. 
    As the number of full twists are integer-valued, we can get a split template that is the same as that of the Lorenz template (see example in \cite[Figure~2, bottom]{Hui-Rodriguez:BunchesUpperVolume}), and the corresponding sets of code words are the same for both templates. The statement then follows from the proof of \cite[Theorem~3.7]{Hui-Rodriguez:BunchesUpperVolume}. 
\end{proof} 

Since the full twists of strips do not affect the order of bases within each bunch, the concept \emph{order within full bunches} in Lorenz link introduced in \cite{Hui-Rodriguez:BunchesUpperVolume} can thus apply to Lorenz-like links. Readers may refer to \cite[Section~3.2]{Hui-Rodriguez:BunchesUpperVolume} for more details and examples of finding order within full bunches given the code words of a Lorenz-like link. 

Following from \refthm{UnionBunchesOfTurns_Gen} and \cite[Proposition~3.8]{Hui-Rodriguez:BunchesUpperVolume}, we have the following generalised bunch algorithm for constructing the Lorenz-like link in $\mathscr{L}(u,v)$ (where $u$, $v$ are even integers) whenever code words are given:

\begin{corollary}[The generalised bunch algorithm] \label{Cor:Algorithm} 
Let $u$, $v$ be even integers. 
Let $L$ be a $(u,v)$-Lorenz-like link embedded in $\mathscr{L}(u,v)$ with the sum of all word periods denoted by $\overline{n}$. Suppose $K$ is a link component of $L$ with labelled code word $w = x_1^{k_1} y_1^{l_1} \ldots x_n^{k_n} y_n^{l_n}$. The link component $K$ can be constructed in the split template using the following steps: 
\begin{enumerate}
    \item Mark the branch line with integers $-k_{\mu}, \ldots, +l_{\lambda}$, where $k_{\mu}$ and $l_{\lambda}$ are a maximal $x$- and $y$-exponents among all words associated to $L$ respectively. 
    
    \item Determine the orders within the full bunches for the Lorenz-like link \(L\) using \cite[Proposition~3.8]{Hui-Rodriguez:BunchesUpperVolume}. In each negative (or positive  resp.) interval, draw $\overline{n}$ points with $x$-base (or $y$-base resp.) labels from left to right according to the order within the full $x$-bunch (or full $y$-bunch resp.) of the link (NOT the link component). 
    
    \item Construct $K$ by first drawing the $x_1^{k_1}$-arc, which starts from the $x_1$-point in the $-{k_1}^{\textup{th}}$ interval and ends at the $y_1$-point in the $+{l_1}^{\textup{th}}$ interval. Then, draw the $y_1^{l_1}$-arc which starts from the $y_1$-point in the $+{l_1}^{\textup{th}}$ interval and ends at the $x_2$-point in the $-{k_2}^{\textup{th}}$ interval. Continue drawing the arcs for subwords $x_2^{k_2}, y_2^{l_2}, \ldots, x_n^{k_n}, y_n^{l_n}$ with known starting and ending points to obtain $K$ in the split template. 
\end{enumerate}
    
By repeating Step (3) for each link component of $L$ and making $u$ and $v$ half twists on the left and right strips of the split template respectively, the Lorenz-like link $L$ embedded in the Lorenz-like template can be obtained. 
\end{corollary}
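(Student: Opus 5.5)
The generalised bunch algorithm in \refcor{Algorithm} should follow from \refthm{UnionBunchesOfTurns_Gen} and \cite[Proposition~3.8]{Hui-Rodriguez:BunchesUpperVolume}. The idea is to reduce entirely to the Lorenz-template case and then reinsert the twists at the end.

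First, since $u$ and $v$ are even, the left and right strips of $\mathscr{L}(u,v)$ carry $\frac{u}{2}$ and $\frac{v}{2}$ full twists. I would cut $\mathscr{L}(u,v)$ along the branch line and undo these full twists on each strip to obtain the split template. A full twist of a strip returns every point of the strip to its original position. So this split template is combinatorially identical to the split Lorenz template used in \cite{Hui-Rodriguez:BunchesUpperVolume}: the branch line, its subdivision into negative and positive intervals, and the correspondence between orbit segments and subwords $x_i^{k_i}$, $y_i^{l_i}$ are unchanged. In particular, the symbolic dynamics, and hence the set of admissible labelled code words (\ref{Words}), agree for the two templates.

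Second, I would justify the steps in order. Step~(1) is a bookkeeping choice: marking the intervals from $-k_\mu$ to $+l_\lambda$ ensures that every arc of every component has an available starting interval. For Step~(2), by \refthm{UnionBunchesOfTurns_Gen}, $L$ is isotopic in the split template to a union of bunches of turns ordered from left to right. Each interval of the branch line contains exactly one base point for each of the $\overline{n}$ trips of the whole link. The left-to-right order of these points within a full $x$-bunch or $y$-bunch is determined by the code words alone, and \cite[Proposition~3.8]{Hui-Rodriguez:BunchesUpperVolume} computes it. This uses only the combinatorics of the split template, so it applies verbatim, as the remark preceding the corollary indicates.

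Step~(3) then follows from conditions (1) and (2) of \refthm{UnionBunchesOfTurns_Gen}. The $x_i^{k_i}$-arc starts in the $-k_i$-th interval at the $x_i$-point, and it ends where the next subword's arc begins, namely at the $y_i$-point in the $+l_i$-th interval. Likewise each $y_i^{l_i}$-arc runs from there to the $x_{i+1}$-point in the $-k_{i+1}$-th interval, with indices taken cyclically within each word. Concatenating these arcs gives each component $K$, and doing so for all $c$ words gives $L$ in the split template. Finally, regluing the strips with $u$ and $v$ half twists is an ambient isotopy back to $\mathscr{L}(u,v)$ that carries the arcs along, which yields the embedded Lorenz-like link.

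The main obstacle I expect is the claim that untwisting the full twists preserves the order of bases within bunches, that is, that the left/right ordering on the branch line is unchanged. I would handle this by observing that a full twist is isotopic to the identity on the strip rel the branch line, so the first-return map to the branch line is the same as for the Lorenz template. Half twists would reverse orientation, which is exactly why evenness of $u$ and $v$ is needed.
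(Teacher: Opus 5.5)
Your proposal follows the paper's route. Evenness of $u$ and $v$ means the strips carry only full twists, so the split template and the set of labelled code words agree with those of the Lorenz template. Then \refthm{UnionBunchesOfTurns_Gen} (via the proof of \cite[Theorem~3.7]{Hui-Rodriguez:BunchesUpperVolume}) together with \cite[Proposition~3.8]{Hui-Rodriguez:BunchesUpperVolume} gives Steps (1)--(3), and the twists are reinserted at the end.

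One wording fix: reinserting the $u$ and $v$ half twists is not an ambient isotopy of $S^3$, since it generally changes the link type. It is a re-embedding of the strips that carries the drawn arcs along, which is all the corollary's last sentence claims.
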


\section{An upper bound for the volumes of all link complements in the $3$-sphere} \label{Sec:VolBdForAllLinks} 

Using the techniques developed in \refsec{Bunch&Templates} and \cite{Hui-Rodriguez:BunchesUpperVolume}, we give an upper bound for the volume sums of all hyperbolic pieces in any link complement in the $3$-sphere. Such an upper bound is quadratic in the minimal trip number (see~\refcor{UpperBdTripNumber}). 

One of the results we use for finding the upper volume bound is Theorem~1.5 in \cite{Cremaschi-RodriguezMigueles:HypOfLinkCpmInSFSpaces}, which can be stated in the following.

\begin{theorem}[Theorem~1.5 in \cite{Cremaschi-RodriguezMigueles:HypOfLinkCpmInSFSpaces}]\footnote{Abuse of notation: The symbol $\calL$ in \refthm{SelfIntersectionNumber} is considered as an embedding in the function composition and as a set otherwise. }  \label{Thm:SelfIntersectionNumber}
Let $M$ be a Seifert-fibred space over a hyperbolic $2$-orbifold $B$ with $\calP\from M\to B$ defined to be the corresponding projection map. If $\calL$ is a link embedded in $M$ such that $\calP\circ \calL$ is a collection of loops that intersect themselves only transversely and finitely many times with exactly two pre-image points for each self-intersection point, 
% If $B\setminus\calP(L)$ is a union of finitely many discs with finitely many punctures, 
then the simplicial volume of $M\setminus \calL$ satisfies the following inequality: 
\[\norm{M\setminus \calL} \leq 8\ \iota(\calP(\calL),\calP(\calL)), \] 
where $\iota(\calP(\calL),\calP(\calL))$ is the number of self-intersections of $\calP(\calL)$. 
\end{theorem}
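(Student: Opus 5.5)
The plan is to reduce to a local computation at each self-intersection point of $\calP(\calL)$. The idea is to enlarge $\calL$ by drilling extra curves until the complement splits into standard pieces whose simplicial volume we can count. We then recover $M\setminus\calL$ by Dehn filling, which does not increase simplicial volume.

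\textbf{Step 1: isotope $\calL$ into a standard position.} Let $\Gamma=\calP(\calL)\subset B$. By hypothesis $\Gamma$ is a $4$-valent graph with $\iota=\iota(\calP(\calL),\calP(\calL))$ vertices, together with possibly some embedded loops with no crossings. Let $N(\Gamma)$ be a regular neighbourhood of $\Gamma$ and $V=\calP^{-1}(N(\Gamma))$. Over each edge of $\Gamma$, isotope $\calL$ along the fibres to a horizontal arc. Over each vertex, the two preimage arcs then sit at two different fibre heights, which is possible because each self-intersection point has exactly two preimages. The complement $M\setminus V$ is Seifert fibred, and it meets $V$ along tori, namely the preimages of the boundary circles of $N(\Gamma)$. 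Gromov's subadditivity of simplicial volume under gluing along tori, together with the vanishing of simplicial volume for Seifert fibred pieces, gives $\norm{M\setminus\calL}\le\norm{V\setminus\calL}$.

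\textbf{Step 2: drill and cut into crossing blocks.} Over each complementary region of $\Gamma$ inside $N(\Gamma)$, and near each vertex, drill a few regular fibres. Then cut $V\setminus\calL$ along vertical annuli and tori lying over arcs that separate the vertices. After this, the components of $\Gamma$ without crossings and the pieces lying over edges become Seifert fibred or product pieces, so they contribute zero. The piece lying over each vertex becomes a fixed \emph{crossing block}: a solid torus $D^2\times S^1$ minus two arcs at different heights that cross in projection, together with the drilled fibres. Its interior is homeomorphic to a fixed hyperbolic link complement of Borromean type, made of two regular ideal octahedra, so its volume is $2v_{\mathrm{oct}}\approx 7.33\,v_{\mathrm{tet}}$.

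\textbf{Step 3: bound each block.} Give each crossing block an explicit ideal triangulation with at most $8$ tetrahedra, for instance by subdividing each of the two octahedra. Straightening shows that the simplicial volume of such a manifold is at most its number of tetrahedra. Alternatively, $\norm{\cdot}=\operatorname{Vol}/v_{\mathrm{tet}}\le 2v_{\mathrm{oct}}/v_{\mathrm{tet}}<8$. Either way we have the bound $8$ per vertex.

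\textbf{Step 4: assemble.} Subadditivity along tori gives $\norm{\text{drilled manifold}}\le 8\iota$. Then $M\setminus\calL$, and before it $V\setminus\calL$, are obtained by Dehn filling the drilled fibres, so their simplicial volumes are no larger. This yields $\norm{M\setminus\calL}\le 8\iota$.

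The main obstacle is Step 2: choosing the drilled fibres and the cutting surfaces so that every piece is either Seifert fibred or exactly a copy of the standard crossing block. The difficulty is that we must cut along tori, or along annuli whose boundaries are handled correctly, for subadditivity to apply. The first thing I would try is to drill one fibre over each corner of each vertex square and cut along the vertical tori around each vertex square, then check directly that each piece is the Borromean-type block.
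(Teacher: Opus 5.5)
The paper does not prove this statement; it quotes it from Cremaschi--Rodr\'{\i}guez-Migueles. So I am judging your argument on its own merits.

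\textbf{Step~1 is fine.} Gromov's gluing inequality along tori needs no incompressibility, and Seifert-fibred pieces have zero simplicial volume.

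\textbf{The gap is in Steps~2--4,} at exactly the obstacle you flag, and a cleverer choice of tori cannot fix it. A strand of $\calL$ runs over every edge of $\Gamma$. So any surface separating the regions over two adjacent vertices meets $\calL$, and no family of tori in $V\setminus\calL$ (drilled or not) cuts it into one piece per crossing. Your proposed fix shows this concretely: $\calP^{-1}(\partial Q_v)$ meets $\calL$ in four points. After drilling the four corner fibres it becomes four once-punctured vertical annuli, i.e.\ four thrice-punctured spheres. Gromov's subadditivity is only available along gluing surfaces with amenable fundamental group, and these have free fundamental group of rank two.

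For the same reason the crossing block is not a link complement: its boundary contains these four faces. So neither an ideal triangulation of the block nor the identity $\norm{N}=\mathrm{Vol}(N)/v_{\textup{tet}}$ (valid for cusped hyperbolic $N$) applies to it, and per-block bounds cannot simply be summed.

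\textbf{The missing idea} is to use hyperbolic geometry across these thrice-punctured spheres instead of simplicial-volume subadditivity. Your guess about the block is right:
\begin{itemize}
\item Gluing its opposite faces by product maps gives $(T^2\setminus\{\mathrm{pt}\})\times S^1$ minus two horizontal curves, in the two coordinate directions and at different heights.
\item That is $T^3$ minus three pairwise disjoint circles parallel to the three coordinate axes, which is the Borromean rings complement.
\item Hence the block is that complement cut along two disjoint essential thrice-punctured spheres. It is hyperbolic with totally geodesic boundary and has volume $2v_{\mathrm{oct}}$.
\end{itemize}

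\textbf{Assembling the blocks.}
\begin{itemize}
\item Take arcs that join boundary circles of $N(\Gamma)$ and cross each edge of $\Gamma$ once. They cut $N(\Gamma)$ into one quadrilateral per vertex.
\item The boundary tori of $V$ play the role of your corner fibres, so no drilling or refilling is needed.
\item Every homeomorphism of a thrice-punctured sphere is isotopic to an isometry. So the interior of the part of $V\setminus\calL$ lying over the components of $\Gamma$ with crossings is obtained by isometrically gluing $\iota$ blocks along $2\iota$ totally geodesic thrice-punctured spheres.
\item That part is therefore hyperbolic with volume $2v_{\mathrm{oct}}\,\iota$. Its simplicial volume is $2v_{\mathrm{oct}}\,\iota/v_{\textup{tet}}\approx 7.22\,\iota<8\,\iota$.
\item The crossing-free components contribute Seifert-fibred pieces, and your Step~1 then finishes the proof.
\end{itemize}
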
 

For details on simplicial volume and its relationship with hyperbolic volume, readers may refer to \cite[6.5.2]{Thurston:Geom&TopOf3Mfd}, \cite[6.5.4]{Thurston:Geom&TopOf3Mfd} and \cite[6.1.7]{Thurston:Geom&TopOf3Mfd}. A fact that we will use is: if $M$ is an oriented, finite-volume hyperbolic manifold, then $v_{\text{tet}}\norm{M} = \mathrm{Vol}(M)$, where $v_{\textup{tet}}\approx 1.01494$ denotes the volume of the regular ideal tetrahedron and $\norm{M}$ is the simplicial volume of $M$.

\begin{definition}[Parent manifold of each Lorenz-like link] \label{Def:ParentMfd}
    The \emph{parent manifold $S^3\setminus(\calL_{\textup{SF}}\cup \calL_{L})$ of the Lorenz-like link $L$} is the complement of the link $\calL_L$ in the Seifert-fibred space $S^3\setminus \calL_{\textup{SF}}$ (see \cite[Definition~4.3]{Hui-Rodriguez:BunchesUpperVolume}), where $\calL_L$ consists of the following: 
    \begin{itemize}
        \item the parent link $L_{\mathrm{p}}$ (see \cite[Definition~4.5]{Hui-Rodriguez:BunchesUpperVolume} for its construction), 
        \item the unknots $V_{a_1}, \ldots, V_{a_{\#l}}$, where each $V_j$ encircles all the blue vertical line segment(s) (as defined in Step~(3) of \cite[Definition~4.5]{Hui-Rodriguez:BunchesUpperVolume}) that is/are going to intersect the annulus $A^y_j$, and
        \item the unknots $V_L$ and $V_R$ that encircle the left and right sets of annuli respectively.  
    \end{itemize} 
\end{definition} 

See \reffig{LinkCompleInSFSpaceResized_TwoMoreUnknots} for example. 

\begin{figure}
%% Creator: Inkscape 1.1.2 (b8e25be833, 2022-02-05), www.inkscape.org
%% PDF/EPS/PS + LaTeX output extension by Johan Engelen, 2010
%% Accompanies image file 'LinkCompleInSFSpaceResized_TwoMoreUnknots.pdf' (pdf, eps, ps)
%%
%% To include the image in your LaTeX document, write
%%   \input{<filename>.pdf_tex}
%%  instead of
%%   \includegraphics{<filename>.pdf}
%% To scale the image, write
%%   \def\svgwidth{<desired width>}
%%   \input{<filename>.pdf_tex}
%%  instead of
%%   \includegraphics[width=<desired width>]{<filename>.pdf}
%%
%% Images with a different path to the parent latex file can
%% be accessed with the `import' package (which may need to be
%% installed) using
%%   \usepackage{import}
%% in the preamble, and then including the image with
%%   \import{<path to file>}{<filename>.pdf_tex}
%% Alternatively, one can specify
%%   \graphicspath{{<path to file>/}}
%% 
%% For more information, please see info/svg-inkscape on CTAN:
%%   http://tug.ctan.org/tex-archive/info/svg-inkscape
%%
\begingroup%
  \makeatletter%
  \providecommand\color[2][]{%
    \errmessage{(Inkscape) Color is used for the text in Inkscape, but the package 'color.sty' is not loaded}%
    \renewcommand\color[2][]{}%
  }%
  \providecommand\transparent[1]{%
    \errmessage{(Inkscape) Transparency is used (non-zero) for the text in Inkscape, but the package 'transparent.sty' is not loaded}%
    \renewcommand\transparent[1]{}%
  }%
  \providecommand\rotatebox[2]{#2}%
  \newcommand*\fsize{\dimexpr\f@size pt\relax}%
  \newcommand*\lineheight[1]{\fontsize{\fsize}{#1\fsize}\selectfont}%
  \ifx\svgwidth\undefined%
    \setlength{\unitlength}{374.7297126bp}%
    \ifx\svgscale\undefined%
      \relax%
    \else%
      \setlength{\unitlength}{\unitlength * \real{\svgscale}}%
    \fi%
  \else%
    \setlength{\unitlength}{\svgwidth}%
  \fi%
  \global\let\svgwidth\undefined%
  \global\let\svgscale\undefined%
  \makeatother%
  \begin{picture}(1,0.42487583)%
    \lineheight{1}%
    \setlength\tabcolsep{0pt}%
    \put(0,0){\includegraphics[width=\unitlength,page=1]{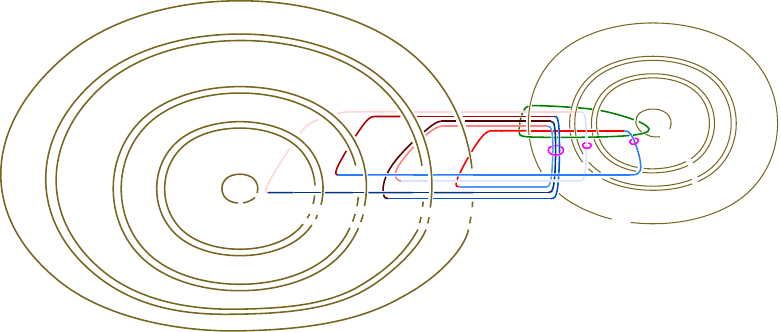}}%
    \put(0.71957315,0.21132254){\color[rgb]{1,0,1}\makebox(0,0)[lt]{\lineheight{1.25}\smash{\begin{tabular}[t]{l}\fontsize{5pt}{1em}$V_2$\end{tabular}}}}%
    \put(0.75310614,0.21933034){\color[rgb]{1,0,1}\makebox(0,0)[lt]{\lineheight{1.25}\smash{\begin{tabular}[t]{l}\fontsize{5pt}{1em}$V_3$\end{tabular}}}}%
    \put(0.81830144,0.22733654){\color[rgb]{1,0,1}\makebox(0,0)[lt]{\lineheight{1.25}\smash{\begin{tabular}[t]{l}\fontsize{5pt}{1em}$V_6$\end{tabular}}}}%
    \put(0,0){\includegraphics[width=\unitlength,page=2]{LinkCompleInSFSpaceResized_TwoMoreUnknots.pdf}}%
    \put(0.67231021,0.1001778){\color[rgb]{0.50588235,0.50588235,0.50588235}\makebox(0,0)[lt]{\lineheight{1.25}\smash{\begin{tabular}[t]{l}$U$\end{tabular}}}}%
  \end{picture}%
\endgroup%

\caption{Parent manifold $S^3\setminus (\calL_{\textup{SF}}\cup \calL_K)$ of the Lorenz-like knot $K$ with code word $x^{10}y^2x^5y^2x^7y^6x^2y^2x^5y^3$, where $\calL_{\textup{SF}}$ consists of the unknot $U$ and all annuli boundaries, and $\calL_K$ consists of the parent knot $K_{\mathrm{p}}$ (with colours red and blue), the unknots $V_L$ and $V_R$ (green), and the unknots $V_2$, $V_3$, $V_6$ (magenta).   The Seifert-fibred space is $S^3\setminus \calL_{\textup{SF}}$ with base space equal to a $14$-punctured disc bounded by $U$. 
\label{Fig:LinkCompleInSFSpaceResized_TwoMoreUnknots}}
\end{figure}

\begin{proposition} \label{Prop:DehnFillParentMfd_Gen}
    Let $u$, $v$ be even integers. Each $(u,v)$-Lorenz-like link complement $S^3\setminus L$ can be obtained by applying Dehn fillings on its parent manifold $S^3\setminus(\calL_{\textup{SF}}\cup \calL_L)$.  
\end{proposition}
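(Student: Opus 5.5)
We would prove \refprop{DehnFillParentMfd_Gen} by retracing the Dehn filling argument used for Lorenz links in \cite{Hui-Rodriguez:BunchesUpperVolume}, and isolating where the half twists of $\mathscr{L}(u,v)$ enter. The evenness of $u$ and $v$ lets us write the strip twisting as $\frac{u}{2}$ and $\frac{v}{2}$ full twists. By \refthm{UnionBunchesOfTurns_Gen} and \refcor{Algorithm}, the link $L$ is the same union of bunches of turns in the split template as the Lorenz link with the same code words, followed by these full twists on the left and right strips. So $L$ is obtained from the corresponding Lorenz link $L^0$ (same code words, in $\mathscr{L}(0,0)$) by inserting $\frac{u}{2}$ full twists in the bundle of strands through the left strip and $\frac{v}{2}$ full twists in the bundle through the right strip.

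\textbf{Step 1.} We recover $L^0$ from the parent manifold without the extra unknots. By the proof of the corresponding statement in \cite{Hui-Rodriguez:BunchesUpperVolume}, Dehn filling the components of $\calL_{\textup{SF}}$ (the unknot $U$ and the annuli boundaries), the unknots $V_{a_1},\ldots,V_{a_{\#l}}$, and the pieces of $L_{\mathrm{p}}$ that are not themselves components of the link, along suitable slopes, yields $S^3\setminus L^0$. The slopes are of the form $1/m$ on unknotted components bounding discs, so each filling inserts full twists in the strands meeting the disc; the integers $m$ are determined by the exponents $k_i$ and $l_j$. I would cite this step essentially verbatim, checking only that the construction of $L_{\mathrm{p}}$ and of the $V_j$ in \refdef{ParentMfd} agrees with \cite[Definition~4.5]{Hui-Rodriguez:BunchesUpperVolume}.

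\textbf{Step 2.} We account for $V_L$ and $V_R$. By construction $V_L$ bounds a disc $D_L$ meeting exactly the strands passing through the left set of annuli, that is, the left strip of the template; similarly $V_R$ bounds a disc $D_R$ for the right strip. Performing $-1/\frac{u}{2}$ surgery on $V_L$ is a Rolfsen twist along $D_L$, with sign chosen to match the orientation convention of \reffig{LorenzLikeTemplate}. This twist leaves $S^3$ unchanged and inserts exactly $\frac{u}{2}$ full twists into those strands. The same holds for $V_R$ with $-1/\frac{v}{2}$. When $u=0$ or $v=0$ we use the trivial slope $1/0$, which removes the component.

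\textbf{Step 3.} We check compatibility of the two steps. This is the main obstacle. We must show that the twisting disc $D_L$ can be placed in a region of the template, near the branch line on the left strip, that the disc is disjoint from the discs used in Step 1, or at least that the order of the twists does not matter. We must also show that the strands it meets are precisely the left strip strands of $L^0$ once Step 1 is done. First I would place $V_L$ and $V_R$ encircling the strips just above the branch line in the split template, outside the annuli region and away from $U$ and the $V_j$. With this placement the fillings are supported in disjoint neighbourhoods and commute. Under the identification from Step 1, the strands through $D_L$ are exactly the $\overline{n}$ $x$-arcs leaving the branch line, and similarly for $D_R$ and the $y$-arcs. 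Combining the fillings then gives $S^3\setminus L$, as required.
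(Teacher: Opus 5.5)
Your Steps~1 and~2 are the paper's argument: cite the Lorenz case (Proposition~4.8 of \cite{Hui-Rodriguez:BunchesUpperVolume}), then fill $V_L$ and $V_R$ with slopes $1/(u/2)$ and $1/(v/2)$ (your sign is only a convention). The paper does not isolate a compatibility step. Your Step~3, which is where you locate the real difficulty, resolves it incorrectly, for two reasons.

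\textbf{The placement is not yours to choose.} Definition~\ref{Def:ParentMfd} fixes $V_L$ and $V_R$ as encircling the left and right sets of annuli. This placement is also what produces the $2\cdot 2\overline{n}$ extra crossings in the proof of Theorem~\ref{Thm:UpperBdTripNumber}.

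\textbf{Your placement gives the wrong link.} The $u$ half twists of $\mathscr L(u,v)$ act on every passage of $L$ through the left strip. That is $\sum_i k_i$ strands, one for each letter $x$; likewise the right strip carries $\sum_j l_j$ strands. In the parent manifold, the repeated turns around an ear only appear after twisting along the annuli. So a disc kept disjoint from all Step~1 twisting regions meets only the $\overline n$ strands coming from $L_{\mathrm p}$, as you say yourself. A $1/m$ filling on its boundary twists only those strands. This already fails for the one-trip word $xy^2$ in $\mathscr L(0,4)$. There the Lorenz-like knot is the closure of $\sigma_1\sigma_2\sigma_1^{4}$ (two full twists on the two passages through the right strip), i.e.\ the $(2,5)$-torus knot. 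Your $D_R$ would meet a single strand of $L^0$, which is the unknot, so your $V_R$-filling does nothing and returns the unknot.

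\textbf{The fix.} Keep $V_L$ and $V_R$ where the definition puts them, and order the fillings.
\begin{enumerate}
\item Because $V_L$ and $V_R$ go around the annuli, they are disjoint from them. The discs $D_L$, $D_R$ they bound do cross each annulus in a spanning arc and are punctured by its two boundary components.
\item Perform the fillings of the Lorenz case first. The homeomorphism identifying each annulus filling with $S^3$ is an annulus twist supported near that annulus. So $V_L$ and $V_R$ are the same unknots afterwards and still bound $D_L$ and $D_R$.
\item Every turn created by twisting along $A^x_i$ (resp.\ $A^y_j$) now crosses $D_L$ (resp.\ $D_R$). These discs are therefore genuine cross-sections of the two strips of $L^0$; this is what the placement around the annuli is for.
\item Only now are the fillings on $V_L$, $V_R$ Rolfsen twists inserting exactly the $u/2$ and $v/2$ full twists of the strips.
\end{enumerate}
The order matters for the bookkeeping. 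A Rolfsen twist along $D_L$ performed first would shift the filling slopes of the annulus boundaries it meets by $\pm u/2$. That destroys the annulus-twist form of those fillings. So the disjointness you were aiming for is neither available nor needed.

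Two smaller points. In Step~1, no component of $L_{\mathrm p}$ is filled. And the parent manifold in \cite{Hui-Rodriguez:BunchesUpperVolume} also has the trefoil drilled, which is not drilled here. This is the other difference recorded in the paper's proof.
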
  

\begin{proof}
Since the generalised bunch algorithm (\refcor{Algorithm}) holds, part of the argument follows similarly as the argument in \cite[Proposition~4.8]{Hui-Rodriguez:BunchesUpperVolume}. The only differences between the parent manifold in \cite[Proposition~4.8]{Hui-Rodriguez:BunchesUpperVolume} and the parent manifold here are:
\begin{enumerate}
    \item The former leaves the trefoil knot component not Dehn-filled while the later does not drill out the trefoil knot component. 
    \item The former does not have twists on the two strips of the Lorenz template while the later may have full twists, which require extra Dehn fillings on $V_L$ and $V_R$. 
\end{enumerate}

Apart from the Dehn fillings mentioned in the argument of \cite[Proposition~4.8]{Hui-Rodriguez:BunchesUpperVolume}, we apply $(\frac{1}{u/2})$-Dehn filling on $V_L$ and apply $(\frac{1}{v/2})$-Dehn filling on $V_R$. The desired complement of the Lorenz-like link $L$ can thus be obtained.  
\end{proof}

\begin{theorem} \label{Thm:UpperBdTripNumber}
Let $u$, $v$ be even integers.  
Let $L$ be a link in the $3$-sphere embedded in a Lorenz-like template $\mathscr{L}(u, v)$ with trip number equal to $\overline{n}$. Let $\mathrm{Vol}(L)$ denote the sum of the volumes of the hyperbolic pieces of $S^3\setminus L$. Then, 
\[\mathrm{Vol}(L) \leq 12\ v_{\textup{tet}}(\overline{n}^2+3\overline{n}), \]
where $v_{\textup{tet}}\approx 1.01494$ is the volume of the regular ideal tetrahedron. 
\end{theorem}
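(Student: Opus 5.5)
The plan is to pass through the parent manifold of \refdef{ParentMfd}. By \refprop{DehnFillParentMfd_Gen}, $S^3\setminus L$ is obtained from $S^3\setminus(\calL_{\textup{SF}}\cup\calL_L)$ by Dehn filling. Simplicial volume does not increase under Dehn filling (Thurston, \cite[6.5.4]{Thurston:Geom&TopOf3Mfd}). The simplicial volume of $S^3\setminus L$ equals the sum of the simplicial volumes of its hyperbolic JSJ pieces, and each such piece satisfies $v_{\textup{tet}}\norm{\cdot}=\mathrm{Vol}$. Together these give
\[
\mathrm{Vol}(L)=v_{\textup{tet}}\norm{S^3\setminus L}\le v_{\textup{tet}}\norm{S^3\setminus(\calL_{\textup{SF}}\cup\calL_L)}.
\]
It therefore suffices to show $\norm{S^3\setminus(\calL_{\textup{SF}}\cup\calL_L)}\le 12(\overline{n}^2+3\overline{n})$.

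To bound this, I will view the parent manifold as the complement of $\calL_L$ in the Seifert-fibred space $M=S^3\setminus\calL_{\textup{SF}}$. The base orbifold $B$ of $M$ is a multiply punctured disc bounded by $U$. Its number of punctures is bounded in terms of $\overline{n}$ and is at least large enough that $B$ is hyperbolic, possibly after adding a dummy annulus, which only increases the volume we bound. I will then apply \refthm{SelfIntersectionNumber}, which gives
\[
\norm{M\setminus\calL_L}\le 8\,\iota(\calP(\calL_L),\calP(\calL_L)).
\]
The task reduces to arranging $\calL_L$ in general position with respect to the fibration and counting crossings of its projection: the projection must have only transverse double points, with exactly two preimages each, and I need $\iota\le \tfrac32(\overline{n}^2+3\overline{n})$.

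The count splits by component type, following \cite[Section~4]{Hui-Rodriguez:BunchesUpperVolume}:
\begin{enumerate}
\item The parent link $L_{\mathrm p}$ projects to a curve whose self-intersections come from pairs of trips crossing at the branch line. This should give the quadratic term, roughly $\binom{\overline{n}}{2}$-type counts times a constant. Red and blue parts are counted separately.
\item Each unknot $V_j$ meets the blue segments it encircles. There are at most $\overline{n}$ such unknots, each contributing $O(\overline{n})$ crossings, though ideally only a linear number in total.
\item The new unknots $V_L$ and $V_R$ each encircle one side of the annuli. Their projections meet $\calP(L_{\mathrm p})$ in $O(\overline{n})$ points, since each trip passes through each strip a bounded number of times. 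This produces the extra linear term compared with the Lorenz case.
\end{enumerate}
I will reuse the count of \cite[Theorem~4.9]{Hui-Rodriguez:BunchesUpperVolume} for the $L_{\mathrm p}\cup\bigcup V_j$ part. To that I will add the contributions of $V_L$ and $V_R$, and subtract the contribution of the trefoil component, which is now filled rather than drilled. Finally I will check that the total is at most $\tfrac32\overline{n}^2+\tfrac92\overline{n}$.

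The main obstacle is this bookkeeping. First, I must verify that $V_L$ and $V_R$ can be isotoped so that their projections are transverse to $\calP(\calL_L)$ and to each other, meeting it only in $O(\overline{n})$ double points. Second, I must verify that the Seifert fibration, with $V_L$ and $V_R$ linking the annuli, still satisfies the hypotheses of \refthm{SelfIntersectionNumber}. I would first try placing $V_L$ and $V_R$ as loops in $B$ near the boundary of the left and right annulus clusters. Each then crosses each trip's projection at most twice, giving $4\overline{n}$ additional crossings. These combine with the earlier quadratic count to fit within the stated constant.
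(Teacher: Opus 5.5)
Your proposal matches the paper's proof. You reduce to the parent manifold via \refprop{DehnFillParentMfd_Gen} and monotonicity of simplicial volume under Dehn filling, then apply \refthm{SelfIntersectionNumber} with the crossing count $\frac32\overline{n}(\overline{n}-1)$ for $L_{\mathrm p}$ from \cite[Lemma~4.10]{Hui-Rodriguez:BunchesUpperVolume}, plus $2\overline{n}$ for the $V_{a_j}$ and $4\overline{n}$ for $V_L,V_R$, totalling $\frac32\overline{n}^2+\frac92\overline{n}$. The Lemma~4.10 count applies because $L_{\mathrm p}$ coincides with the parent link of the Lorenz link obtained by deleting the half twists, and no trefoil contribution needs subtracting, since the paper's count never includes one.
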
  

\begin{proof} 
By \refprop{DehnFillParentMfd_Gen}, the complement of the $(u,v)$-Lorenz-like link $L$ in the $3$-sphere can be obtained by applying a finite number of Dehn fillings on its parent manifold $S^3\setminus(\calL_{\textup{SF}}\cup \calL_L)$ (see \refdef{ParentMfd}).  Since Dehn fillings do not increase simplicial volume~\cite[Proposition~6.5.2]{Thurston:Geom&TopOf3Mfd}, it suffices to find an upper bound for the simplicial volume of the parent manifold. 

Observe that the parent link $L_{\mathrm{p}}$ of the $(u,v)$-Lorenz-like link $L$ is the same as the parent link of the Lorenz link obtained by removing the $u$~half twists (left strip) and $v$~half twists (right strip) of $L$. By \cite[Lemma 4.10]{Hui-Rodriguez:BunchesUpperVolume}, the parent link $L_{\mathrm{p}}$ can be continuously deformed to a location such that the number of self-intersections of its projection~$\calP(L_{\mathrm{p}})$ is at most $\frac{3}{2}\overline{n}(\overline{n}-1)$. 

The unknot(s) $V_{a_1}, \ldots, V_{a_{\#l}}$ for untwisting undesired full twist(s) contribute at most $2\overline{n}$ self-crossings of $\calP(\calL_L)$ because there are a total of $\overline{n}$ (blue) vertical strand(s) approaching the $y$-annuli and the number of crossings that each unknot creates is twice the number of strands it encircles.  
(See \reffig{LinkCompleInSFSpaceResizedProjection_TwoMore} for example, where the parent link $L_{\mathrm{p}}$ is the parent knot $K_{\mathrm{p}}$ and $V_{a_1}, \ldots, V_{a_{\#l}}$ is $V_2, V_3, V_6$ in that case.)

Hence, the total number of self-crossings in $\calP(\calL_L\setminus (V_L\cup V_R))$ is 
\[\frac{3}{2}\overline{n}(\overline{n}-1) + 2\overline{n}= \frac{3}{2}\overline{n}^2+\frac{1}{2}\overline{n}.\]  

With the addition of the two loops $V_L$ and $V_R$ in the parent manifold (See \reffig{LinkCompleInSFSpaceResizedProjection_TwoMore} for example), the total number of self-crossings in $\calP(\calL_L)$ becomes 
\[\frac{3}{2}\overline{n}^2+\frac{1}{2}\overline{n} +  2*2*\overline{n} = \frac{3}{2}\overline{n}^2+\frac{9}{2}\overline{n}\]

By \cite[Theorem~1.5]{Cremaschi-RodriguezMigueles:HypOfLinkCpmInSFSpaces}, we have 
\[\norm{S^3\setminus(\calL_{\textup{SF}}\cup \calL_L)} \leq 8 \ \iota(\calP(\calL_L),\calP(\calL_L)) \leq 12(\overline{n}^2+3\overline{n}).\]

Since Dehn fillings do not increase volumes \cite[Theorem~6.5.6]{Thurston:Geom&TopOf3Mfd}, it follows together with \cite[Theorem~6.5.5]{Thurston:Geom&TopOf3Mfd} that 
\[\mathrm{Vol}(L) = \mathrm{Vol}((S^3 \setminus L)^{\text{hyp}}) \leq v_{\textup{tet}} \norm{S^3\setminus(\calL_{\textup{SF}}\cup \calL_L)}  \leq  12\ v_{\textup{tet}}(\overline{n}^2+3\overline{n}),\] 
an upper bound that is quadratic in terms of the trip number. 
\end{proof} 

\begin{figure}
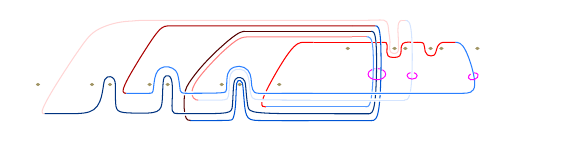
\caption{The projection of the link $\calL_K = K_{\mathrm{p}} \cup V_L \cup V_R \cup V_2 \cup V_3\cup V_6$ in \reffig{LinkCompleInSFSpaceResized_TwoMoreUnknots} in the $14$-punctured disc bounded by the unknot $U$.  The green loops $V_L$ and $V_R$ produce $2*2*5=20$ more crossings in the link projection. 
\label{Fig:LinkCompleInSFSpaceResizedProjection_TwoMore}}
\end{figure} 

Note that a link can be embedded in infinitely many universal templates and the trip number of the same link may differ in different templates (see Proposition~\ref{Prop:TripNumberDependsOnRepresentative} for a related result). 

\refprop{UniversalTemplates} states that the Lorenz-like template $\mathscr{L}(0,v)$ is universal for any negative integer $v$, and the generalised bunch algorithm works for all these universal templates with $v$ being an even integer.  Motivated from the above observations, we define the following invariant of link: 

\begin{definition}\label{Def:MinTripNumber}
For any link \(L\subset S^3\), the \emph{minimal trip number} of \(L\),
denoted by \(\widetilde n(L)\), is
\[
\widetilde n(L)
=
\min
\tau_{\mathscr L(0,2m)}(L'),
\]
where the minimum is taken over all negative integers \(m\) and all representatives
\(L'\) of \(L\) embedded in the Lorenz-like template \(\mathscr L(0,2m)\).
\end{definition}

The following corollary follows from \refthm{UpperBdTripNumber}. 

\begin{corollary} \label{Cor:UpperBdTripNumber}
If $L$ is a link in the $3$-sphere  with minimal trip number $\tilde{n}$, then the following inequality holds: 
\[\mathrm{Vol}(L) \leq 12\ v_{\textup{tet}}(\tilde{n}^2+3\tilde{n}), \]
where $v_{\textup{tet}}\approx 1.01494$ is the volume of the regular ideal tetrahedron.  
\end{corollary}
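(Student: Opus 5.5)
The corollary should follow directly from \refthm{UpperBdTripNumber} once we know that the minimum in \refdef{MinTripNumber} is attained by an admissible representative. First I will check that $\widetilde n(L)$ is well defined. By \refprop{UniversalTemplates}, for any negative integer $m$ the template $\mathscr L(0,2m)$ is universal. Hence $L$ has at least one representative $L'$ embedded in some $\mathscr L(0,2m)$. The set of trip numbers $\tau_{\mathscr L(0,2m)}(L')$ over all such pairs $(m,L')$ is therefore a nonempty set of positive integers. By well-ordering it has a minimum, which is $\tilde n$. So there exist a negative integer $m_0$ and a representative $L_0$ of $L$, embedded in $\mathscr L(0,2m_0)$, with $\tau_{\mathscr L(0,2m_0)}(L_0)=\tilde n$.

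Next I will apply \refthm{UpperBdTripNumber} with $u=0$ and $v=2m_0$, both of which are even integers, as that theorem requires. The theorem gives
\[
\mathrm{Vol}(L_0)\leq 12\,v_{\textup{tet}}(\tilde n^2+3\tilde n).
\]
Since $L_0$ is ambient isotopic to $L$, the complements $S^3\setminus L_0$ and $S^3\setminus L$ are homeomorphic. Their JSJ decompositions therefore agree, so by Mostow rigidity the hyperbolic pieces have the same volumes. This gives $\mathrm{Vol}(L)=\mathrm{Vol}(L_0)$, and the stated inequality follows.

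I do not expect a real obstacle. The only subtle points are the existence of the minimiser, which is handled by well-ordering, and checking the evenness hypothesis of \refthm{UpperBdTripNumber}. The latter holds by construction, because \refdef{MinTripNumber} restricts to templates of the form $\mathscr L(0,2m)$.
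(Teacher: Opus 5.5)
Your proposal is correct and follows the paper's argument: you apply \refthm{UpperBdTripNumber} with $u=0$ and $v=2m$ (both even) to a representative in some $\mathscr L(0,2m)$ that realises the minimal trip number, and you use the fact that $\mathrm{Vol}$ depends only on the link type. Your well-ordering step makes explicit what the paper leaves implicit when it passes from ``every representative'' to the minimum; trip numbers are nonnegative rather than positive integers, but this does not affect the argument.
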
  

\begin{proof} 
Let $m$ be any negative integer. Let $L'$ be any template representative of $L$ in the Lorenz-like template $\mathscr{L}(0,2m)$ with trip number $\overline{n}$. By Theorem~\ref{Thm:UpperBdTripNumber},
$$\mathrm{Vol}(L) \leq 12\ v_{\textup{tet}}(\overline{n}^2+3\overline{n}).$$
Since the inequality holds for any $\overline{n}$ for any template representative $L'$ in any Lorenz-like template of the form $\mathscr{L}(0,2m)$, we have
$$\mathrm{Vol}(L) \leq 12\ v_{\textup{tet}}(\tilde{n}^2+3\tilde{n}),$$
where $\tilde{n}$ is the minimal trip number of $L$.
\end{proof}

\section{Generalised T-links, associated Lorenz links, and volume}
\label{Sec:GeneralisedTLinks}

The goals of this section are to connect the universal braid description of
links by generalised T-links with the volume estimates coming from
Lorenz-like templates, and to show a main result (Theorem~\ref{Thm:VolumeLorenzBraidIndex}) in this paper. 

In~\cite[Theorem~3.10]{dePaiva-Hui-Rodriguez:GeneralisedTLinks}, we know that some generalised T-links coincide with Lorenz-like links in universal Lorenz-like template of the form~$\mathscr{L}(0,2m)$ with $m$ being a negative integer. The volume bound in the last section can thus be applied to those generalised T-links. Since the trip number of a Lorenz link is equal to its braid index~\cite{Franks-Williams:BraidsAndJonesPolynomial} while that of a Lorenz-like link may not, this is why an associated Lorenz link comes into play. 

A generalised T-link may contain a final twisting block which contains negative twists. After adding an appropriate number of full twists that cancel the negative twists, we obtain a classical T-link, which is also a Lorenz link~\cite{Birman-Kofman:NewTwistOnLorenzLinks}.
We call this Lorenz link
the \textit{associated Lorenz link}. 
A key observation from \cite{dePaiva-Hui-Rodriguez:GeneralisedTLinks} is that the trip number of the Lorenz-like representative that is ambient isotopic to the generalised T-link is the same as the trip number of the associated Lorenz link. Since the trip number of a Lorenz link agrees with its braid index \cite{Franks-Williams:BraidsAndJonesPolynomial}, the trip number appearing in the previous volume estimate can be replaced by the braid index of the associated
Lorenz link, which is a widely studied link invariant. 

The above observation forms a bridge between the topology of generalised T-links and the geometry of link complements. This session shows that the volume sum of hyperbolic pieces of any link complement in $S^3$ can be bounded above by a quadratic polynomial of the Lorenz braid index, which is a link invariant defined using generalised T-links and their associated Lorenz links. 

Before showing the main result, we will recall some notions and results from~\cite{dePaiva-Hui-Rodriguez:GeneralisedTLinks}. 

\begin{definition}\label{Def:GeneralisedTLink}
Let \(k\) be a positive integer. Suppose $r_1, \ldots, r_k, r_{k+1}$ are integers such that
\[
2\leq r_1<\cdots<r_k<r_{k+1}.
\]
Let \(s_1,\ldots,s_k\) be positive integers, let
\[
s_{k+1}\in\mathbb Z\setminus\{-1,0,1\},
\]
and let \(d\) be a nonegative integer. The \emph{generalised T-link}
\[
T((r_1,s_1),\ldots,(r_k,s_k),(r_{k+1},s_{k+1}),d)
\]
is the closure of the braid on \(d+r_{k+1}\) strands
\[
(\sigma_{d+1}\cdots\sigma_{d+r_1-1})^{s_1}
\cdots
(\sigma_{d+1}\cdots\sigma_{d+r_k-1})^{s_k}
(\sigma_{d+1}\cdots\sigma_{d+r_{k+1}-1})^{s_{k+1}}.
\]
\end{definition}

In Definition~\ref{Def:GeneralisedTLink}, if \(s_{k+1}<0\), then the last factor is 
\[
(\sigma_{d+1}\cdots\sigma_{d+r_{k+1}-1})^{s_{k+1}}
=
(\sigma_{d+r_{k+1}-1}^{-1}\cdots
 \sigma_{d+1}^{-1})^{-s_{k+1}}.
\]

The following theorem is proved in
\cite[Theorem 1.1]{dePaiva-Hui-Rodriguez:GeneralisedTLinks}.

\begin{theorem}\label{Thm:AllLinksGeneralisedTLinks}
Every link in \(S^3\) can be represented as a generalised T-link.
\end{theorem}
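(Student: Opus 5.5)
The statement just given is Theorem~\ref{Thm:AllLinksGeneralisedTLinks}, that every link in $S^3$ is a generalised T-link. The plan is to pass through the universal Lorenz-like templates.

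\textbf{Step 1: embed the link in a template.} Fix a negative even integer $v=2m$. By \refprop{UniversalTemplates}, the template $\mathscr L(0,2m)$ is universal. So any link $L\subset S^3$ is ambient isotopic to a Lorenz-like link $L'\subset\mathscr L(0,2m)$, with labelled code words as in (\ref{Words}).

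\textbf{Step 2: cut the template open.} Cutting along the branch line, $\mathscr L(0,2m)$ becomes the split Lorenz template with $m$ extra negative full twists inserted in the right strip. By \refthm{UnionBunchesOfTurns_Gen} and \refcor{Algorithm}, $L'$ is obtained in two stages. First take the Lorenz link $L''$ with the same code words in the untwisted Lorenz template. Then insert $m$ full twists on the bundle of strands that pass through the right ear.

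\textbf{Step 3: rewrite $L''$ as a T-link.} By Birman--Kofman, $L''$ is a T-link. I would redo their braid reduction while keeping track of the right-ear strands. In their normal form, the strands travelling through the right ear at the last stage form a block of consecutive strands. The aim is to arrange these as the final $r_{k+1}$ strands of a braid $(\sigma_1\cdots\sigma_{r_1-1})^{s_1}\cdots(\sigma_1\cdots\sigma_{r_k-1})^{s_k}$, after conjugating and shifting indices by $d$.

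\textbf{Step 4: absorb the twists.} A full twist on $r_{k+1}$ strands is $(\sigma_{d+1}\cdots\sigma_{d+r_{k+1}-1})^{r_{k+1}}$, and it is central in the braid group on those strands. So the $m$ negative full twists can be slid to the end of the word and merged into the final exponent $s_{k+1}$, which may then become negative. The parameter $d$ accounts for the strands that do not pass through the twisted region. Finally, check the normalisation: ensure $s_{k+1}\notin\{-1,0,1\}$ by merging or discarding degenerate blocks, and ensure $r_k<r_{k+1}$.

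\textbf{Main obstacle.} The hard part is Step 3: showing that the right-strip strands really form a contiguous final block, compatible with the T-link normal form, so that the twist region matches $\sigma_{d+1},\dots,\sigma_{d+r_{k+1}-1}$ exactly. If a direct argument fails, I would use a fallback. Every link is the closure of some braid, and the universality proof in Ghrist's work proceeds by realising braid generators inside the template. From this one might read off a generalised T-link presentation directly, by inserting enough positive full twists to make the braid positive and Lorenz, and then cancelling them with a single negative final block.
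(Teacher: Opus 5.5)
Your architecture is the one the paper relies on, but the decisive step is missing.

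The paper does not prove this statement in the text; it quotes \cite[Theorem~1.1]{dePaiva-Hui-Rodriguez:GeneralisedTLinks}. The results it records afterwards give the statement in three lines:
\begin{enumerate}
\item by \refprop{UniversalTemplates}, the link lies in $\mathscr L(0,2m)$ for some $m<0$;
\item by \refthm{TnLinksLorenzLikeTemplates}, it is then a $T^{2m}$-link;
\item by the case $n=m\leq 0$ of \refthm{GeneralisedTLinksT2nLinksTemplates}, every $T^{2m}$-link is a generalised T-link.
\end{enumerate}
Your Step~1 is the first of these and your Step~4 is the content of the third. Your Step~3 is exactly the second. There you only assert the key claim, that after the Birman--Kofman reduction the right-strip strands form a final block of consecutive strands carrying the twist, and you flag it yourself as the main obstacle. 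Since this is the only non-formal step, the proposal as written does not prove the theorem.

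What has to be shown is the following. The twist box of $\mathscr L(0,2m)$ acts on all strands passing through the right strip. Their number (the total number of $y$-letters) is in general larger than the trip number, and the passage from a Lorenz braid to a T-braid involves Markov destabilisations that reduce the number of strands. You must show that this reduction can be carried out so that:
\begin{itemize}
\item the strands through the twist box survive as a block of consecutive strands; and
\item every T-block ends up acting inside that block.
\end{itemize}
That is, you must land on $T^{2m}((r_1,s_1),\dots,(r_k,s_k),(R;d))$ with $r_k\leq R$. Only then is $\Delta^{2m}_{R,d+R}=(\sigma_{d+1}\cdots\sigma_{d+R-1})^{mR}$ central in a subgroup containing the whole word. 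Step~4 needs this in order to merge the twist into a final block $(R,mR)$ or $(r_k,s_k+mR)$. Even then, the cases $s_k+mR\in\{-1,0,1\}$, and the requirement of at least one block before the last, force a re-presentation that you only gesture at.

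Your fallback does not avoid the gap. The positive twists you add to reach a Lorenz link live on the right-strip strands, so cancelling them by a single final block is the same unproved claim. Moreover, adding central full twists to an arbitrary braid makes it positive but not a T-braid. For example, $\sigma_1^3\sigma_2^3\Delta^{2N}$ is never conjugate to a $3$-strand T-braid $\sigma_1^a(\sigma_1\sigma_2)^b$: compare the exponent sum modulo $6$ and the trace of the image in $\mathrm{SL}(2,\mathbb Z)$.

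The simplest repair is to replace Steps~2--4 by citations of \refthm{TnLinksLorenzLikeTemplates} and \refthm{GeneralisedTLinksT2nLinksTemplates}.
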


We also recall the following family of links, all numerical variables are integers unless otherwise specified.

\begin{definition}\label{Def:TnLink}
Let \(n\in\mathbb Z\), let \(d\geq0\), and let \(R\geq1\). For \(k\geq1\),
suppose that
\[
2\leq r_1<\cdots<r_k\leq R
\]
and that \(s_1,\ldots,s_k>0\). The \emph{\(T^n\)-link}
\[
T^n((r_1,s_1),\ldots,(r_k,s_k),(R;d))
\]
is the closure of the braid on \(d+R\) strands
\[
(\sigma_{1+d}\cdots\sigma_{d+r_1-1})^{s_1}
\cdots
(\sigma_{1+d}\cdots\sigma_{d+r_k-1})^{s_k}
\Delta^n_{R,d+R},
\]
where \(\Delta^n_{R,d+R}\) denotes \(n\) signed half twists on the
\(R\) consecutive strands \(d+1,\ldots,d+R\).

If \(k=0\), the product preceding \(\Delta^n_{R,d+R}\) is omitted, and
the \(T^n\)-link is the closure of \(\Delta^n_{R,d+R}\). In particular,
when \(R=1\), the link \(T^n((1;d))\) is the unlink with \(d+1\)
components.
\end{definition}

We will also use the following correspondence, proved in
\cite[Theorem~3.5]{dePaiva-Hui-Rodriguez:GeneralisedTLinks}.

\begin{theorem}\label{Thm:TnLinksLorenzLikeTemplates}
Let \(n\in\mathbb Z\). A link \(L\subset S^3\) can be embedded in the
Lorenz-like template \(\mathscr L(0,n)\) if and only if \(L\) is a
\(T^n\)-link. Equivalently, the class of links contained in
\(\mathscr L(0,n)\) coincides with the class of \(T^n\)-links.
\end{theorem}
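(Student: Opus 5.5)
The plan is to reduce both directions to the Birman--Kofman correspondence between Lorenz braids and T-links, treating the $n$ half twists on the right strip of $\mathscr L(0,n)$ as a separate braid factor $\Delta^n_{R,d+R}$ that can be kept apart from the rest of the isotopy.

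\emph{Embedded link to $T^n$-link.} Let $L\subset\mathscr L(0,n)$ and cut the template along its branch line, exactly as in the proof of \refthm{UnionBunchesOfTurns_Gen}. What remains is the split template of the Lorenz template, with the right strip reglued after $n$ half twists. Reading $L$ transversely to the semiflow, it becomes a closed braid of the following shape:
\begin{itemize}
\item a Lorenz braid $\beta_L$ on $p+q$ strands, where $p$ is the number of $x$-points and $q$ the number of $y$-points on the branch line;
\item followed by $\Delta^n$ acting on the $q$ consecutive strands that pass through the right strip.
\end{itemize}
Set $R=q$. I then run the Birman--Kofman argument from \cite{Birman-Kofman:NewTwistOnLorenzLinks} that converts a Lorenz braid into a T-braid
\[
(\sigma_1\cdots\sigma_{r_1-1})^{s_1}\cdots(\sigma_1\cdots\sigma_{r_k-1})^{s_k}.
\]
The point to check is that this conversion can be done by braid moves and conjugations supported away from the twisted block, or that commute with $\Delta$ on those strands. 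Then $\Delta^n_{R,d+R}$ survives unchanged at the end of the word.

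The strands that never enter the block of the $r_i$'s become the $d$ shifted strands. This gives the indices $d+1,\ldots,d+r_i-1$ and the bound $r_k\le R$. The degenerate cases are handled directly:
\begin{itemize}
\item if $L$ uses only the right strip, then $k=0$;
\item if $R=1$, we get the unlink $T^n((1;d))$.
\end{itemize}

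\emph{$T^n$-link to embedded link.} Conversely, given $T^n((r_1,s_1),\ldots,(r_k,s_k),(R;d))$, I first ignore the twist factor. The positive part is a T-braid, so by Birman--Kofman it is a Lorenz braid realised in the split template. I choose this realisation so that the last $R$ strands, carrying the $r_i$-blocks, are exactly the strands crossing the right strip. The $d$ extra strands are added as strands traversing that strip without participating in the positive blocks; I would use the freedom in the code words, namely extra $y$-trips, to arrange this. Regluing the right strip with $n$ half twists then inserts $\Delta^n_{R,d+R}$, so the closure lies in $\mathscr L(0,n)$.

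\emph{Main obstacle.} The hardest step is the bookkeeping in the first direction. One must identify precisely which strands pass through the right ear and show that the Birman--Kofman moves can be performed while keeping those $R$ strands consecutive, in positions $d+1,\ldots,d+R$. My first attempt would be to conjugate so that the right-strip strands sit on the right. I would then check, using the ordering of $y$-points in the full $y$-bunch, that every Birman--Kofman rearrangement occurs either among the left strands or is a full-block move on the right strands. The latter commute with $\Delta$, since $\Delta$ conjugates $\sigma_i$ to $\sigma_{R-i}$ on that block.
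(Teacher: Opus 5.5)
\textbf{There is a genuine gap.} Your outline matches the shape of the construction the paper imports from \cite[Theorem~3.5]{dePaiva-Hui-Rodriguez:GeneralisedTLinks}; compare its use in the proof of Corollary~\ref{Cor:AssociatedLorenzLinkTn}. That shape is: cut $\mathscr L(0,n)$ along the branch line, read the link as a Lorenz-type permutation braid followed by $\Delta^n$ on the right-strip strands, then run Birman--Kofman while carrying $\Delta^n$ along. But the only step not already in Birman--Kofman is the one you defer as ``the point to check'', and the mechanism you offer for it does not work.

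\emph{The conversion cannot be supported away from the twisted block.} In $T^n((r_1,s_1),\ldots,(r_k,s_k),(R;d))$ the blocks $\sigma_{d+1}\cdots\sigma_{d+r_i-1}$ act on strands of the twisted block $d+1,\ldots,d+R$. So Birman--Kofman must create crossings among exactly the strands carrying $\Delta^n$. It does so by absorbing the $p$ left strands, and the strand number drops from $p+q$ to at most $q$, so this is not a sequence of braid relations and conjugations. The absorbed left strands in general cross only part of the right block. Such moves are neither ``among the left strands'' nor ``full-block moves on the right strands'', and they cannot be slid through a twist box on a sub-block.

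\emph{The commuting claim is false as justified.} The relation $\Delta\sigma_i\Delta^{-1}=\sigma_{R-i}$ says precisely that $\Delta$ does not commute with $\sigma_i$; for $R\ge 3$ only even powers of $\Delta$ are central in $B_R$. For odd $n$ the twist reverses the order of the right-strip strands. Anything pushed across it is therefore reflected, and the blocks no longer sit on the first $r_i$ strands of the twisted block.

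What is actually needed is an isotopy from the closed Lorenz braid to the closed T-braid supported in the complement of a box on the right strip. For $n=2m$ this is equivalent to isotoping the Lorenz link together with an unknot $C$ encircling the right strip, sending $C$ to an unknot encircling strands $d+1,\ldots,d+R$, so that $m$ twists along the disc bounded by $C$ produce $\Delta^{2m}_{R,d+R}$. You would also need a separate treatment of the leftover half twist when $n$ is odd.

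\emph{The bookkeeping of $d$ and the converse also need repair.} In Definition~\ref{Def:TnLink} the first $d$ strands are touched neither by the positive blocks nor by $\Delta^n_{R,d+R}$. The right-strip strands that miss the $r_i$-blocks are strands $d+r_k+1,\ldots,d+R$, and these do pass through the twist. So identifying them with the $d$ shifted strands contradicts the definition.

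In the converse you propose to pad a Birman--Kofman realisation with idle right-strip strands ``using extra $y$-trips''. This is not available as stated. Take $n$ even, so $\Delta^n$ is pure. In the Lorenz part, a right strand crossed by no left strand is then a fixed point of the return map, so there is at most one such strand, namely the orbit $y^\infty$. Any other added orbit contains an $x$, and its left strand crosses right strands. That changes the $d$-vector, and hence the positive T-part you were trying to preserve. So when $R-r_k+d\ge 2$, realising the idle strands needs a genuine isotopy argument rather than a strand-for-strand placement, and the proposal does not supply one.
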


We will also use the following result from
\cite[Theorem 3.9 and Corollary~3.11]{dePaiva-Hui-Rodriguez:GeneralisedTLinks}.

\begin{theorem}\label{Thm:GeneralisedTLinksT2nLinksTemplates}
Let \(n\in\mathbb Z\).

If \(n\leq0\), then the class of \(T^{2n}\)-links coincides with the class
of generalised T-links
\[
T((r_1,s_1),\ldots,(r_k,s_k),
(r_{k+1},s_{k+1}),d)
\]
satisfying
\[
s_{k+1}\geq nr_{k+1}.
\]

If \(n>0\), then the class of \(T^{2n}\)-links coincides with the union of
this class of generalised T-links satisfying $s_{k+1}\geq nr_{k+1}$ and the class of trivial links
represented by \(T((2,1),d)\).

More precisely, if
\[
L=
T((r_1,s_1),\ldots,(r_k,s_k),
(r_{k+1},s_{k+1}),d)
\]
is a generalised T-link satisfying
\[
s_{k+1}\geq nr_{k+1},
\]
then \(L\) is equivalent to the \(T^{2n}\)-link
\[
T^{2n}((r_1,s_1),\ldots,(r_k,s_k),
(r_{k+1},s_{k+1}-nr_{k+1}),(r_{k+1};d)).
\]
If \(s_{k+1}-nr_{k+1}=0\), the block
\[
(r_{k+1},s_{k+1}-nr_{k+1})
\]
is omitted.
\end{theorem}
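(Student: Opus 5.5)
The plan is to prove the displayed ``more precisely'' statement first, since it is a direct braid identity. Then I would derive the two class equalities from it by reading it in both directions, treating the degenerate exponents separately.

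\textbf{Step 1: the braid identity.} Write $r=r_{k+1}$ and $\delta=\sigma_{d+1}\cdots\sigma_{d+r-1}$. The standard fact in $B_r$, transported to the strands $d+1,\ldots,d+r$, is that $\delta^{r}=\Delta^2_{r,d+r}$ is the full twist on those strands. Hence for any integer $s_{k+1}$,
\[
\delta^{s_{k+1}}=\delta^{s_{k+1}-nr}\,\Delta^{2n}_{r,d+r}.
\]
Substituting this into the braid of Definition~\ref{Def:GeneralisedTLink} gives exactly the braid of the $T^{2n}$-link
\[
T^{2n}((r_1,s_1),\ldots,(r_k,s_k),(r,s_{k+1}-nr),(r;d))
\]
in Definition~\ref{Def:TnLink}, with $R=r$. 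The hypothesis $s_{k+1}\ge nr$ is used only to ensure that the new last exponent is nonnegative, as Definition~\ref{Def:TnLink} requires; the block is dropped when it is zero. Closures of equal braids are equal links, so this proves the precise statement and the inclusion ``generalised T-links with $s_{k+1}\ge nr_{k+1}$ $\subseteq$ $T^{2n}$-links''.

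\textbf{Step 2: the reverse inclusion.} Take a $T^{2n}$-link $T^{2n}((r_1,s_1),\ldots,(r_k,s_k),(R;d))$ and run the identity of Step~1 backwards: absorb $\Delta^{2n}_{R,d+R}=(\sigma_{d+1}\cdots\sigma_{d+R-1})^{nR}$ into a final block. There are two cases.
\begin{itemize}
\item If $r_k<R$, set $r_{k+1}=R$ and $s_{k+1}=nR$.
\item If $r_k=R$, merge the last block into $(R,\,s_k+nR)$ and use the previous blocks as the positive blocks.
\end{itemize}
In both cases the resulting final exponent is $\ge nR$.

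\textbf{Step 3: degenerate cases, the main obstacle.} Step~2 can produce outputs that Definition~\ref{Def:GeneralisedTLink} forbids:
\begin{itemize}
\item a final exponent in $\{-1,0,1\}$, which happens for $n=0$ with $r_k<R$, or when $s_k+nR\in\{-1,0,1\}$;
\item no positive blocks at all ($k=0$);
\item $R=1$.
\end{itemize}
I would handle these by Markov destabilisation and by absorbing trivial blocks, reducing to a generalised T-link with fewer strands or a smaller $k$. At each reduction I would check that the condition $s_{k+1}\ge nr_{k+1}$ survives; this is where the sign of $n$ matters.
\begin{itemize}
\item For $n\le 0$, an exponent $0$ or $\pm1$ on the last block can be removed: drop the block, or destabilise the $\sigma^{\pm1}$ on the top strand. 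The new last block then has a positive exponent, so it satisfies $s\ge nr$ automatically. Unlinks and the case $R=1$ would also be realised inside this class, for instance by a representative whose last block has exponent $-1$ after destabilisation.
\item For $n>0$, a nonnegative last exponent no longer guarantees $s\ge nr$. The genuinely new links are the unlinks $T^{2n}((1;d))$ and closures of pure full twists with $k=0$. The former are exactly the closures of $\sigma_{d+1}$ on $d+2$ strands, namely $T((2,1),d)$; this explains the extra class in the statement.
\end{itemize}
I expect this case bookkeeping, showing that nothing else escapes the class for $n>0$ and that every edge case lands in the class for $n\le 0$, to be the delicate part. I would organise it by induction on the number of strands $d+R$.
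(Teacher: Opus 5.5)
The paper gives no proof of this statement; it quotes it from \cite{dePaiva-Hui-Rodriguez:GeneralisedTLinks} (Theorem~3.9 and Corollary~3.11 there). Your Step~1 is correct. It fully proves the ``more precisely'' clause, and hence the forward inclusion, which is the direction used in Theorem~\ref{Thm:VolumeBoundAssociatedLorenzLink}.

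The gap is in Step~3, which is where the class equalities are actually decided, and the moves you sketch leave the target class.

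\textbf{The proposed moves are not admissible.} Definition~\ref{Def:GeneralisedTLink} excludes $s_{k+1}\in\{-1,0,1\}$ and, as written, requires $k\geq1$ blocks before the last one. So a representative ``whose last block has exponent $-1$'' is not a generalised T-link. Your remark ``positive exponent, hence $s\geq nr$'' overlooks that exponent $1$ is also excluded, and that destabilising and merging can leave a single block or the trivial braid.

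\textbf{At $n=0$ no bookkeeping can repair this.} A generalised T-link with $s_{k+1}\geq0$ has $s_{k+1}\geq2$ and all exponents positive. If it is a knot, then $d=0$, and it is the closure of a positive braid on $m=r_{k+1}$ strands with at least $2(m-1)$ crossings. By Lemma~\ref{lem:genus-positive-braid-knot} its genus is at least $(m-1)/2>0$. Hence the unknot $T^0((1;0))$ is a $T^0$-link outside the class. So the $n\leq0$ clause, as quoted with this paper's definitions, is false at $n=0$, whereas your plan asserts that unlinks land in the class.

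\textbf{The two-block definition creates further escapes.} Under it, the trefoil $\widehat{\sigma_1^3}=T^0((2,3),(2;0))=T^2((2,1),(2;0))$ is outside the class. For $n=0$ a two-block positive knot has genus at least $r_{k+1}/2\geq 3/2$; for $n=1$ the braid index is $r_{k+1}\geq3$ by Franks--Williams. The Hopf link $\widehat{\sigma_1^2}=T^2((2;0))$ is outside the class for the same braid-index reason. That link is one of the ``pure full twists'' you call genuinely new for $n>0$ and then drop, even though it is not of the form $T((2,1),d)$.

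\textbf{What the induction needs.} Before running it, fix the conventions under which the statement holds:
\begin{itemize}
\item allow single-block generalised T-links $T((r,s),d)$ with $s\notin\{-1,0,1\}$ (this is implicit in listing $T((2,1),d)$ separately, which fails only because $s=1$);
\item treat $n=0$ like $n>0$, with the unlinks $T^{2n}((1;d))=T((2,1),d)$ as the only exceptions.
\end{itemize}
With these conventions your reduction closes up. For $n>0$ no degenerate exponent arises except $R=1$, since $nR\geq2$ and $s_k+nR\geq3$. For $n\leq0$, destabilising terminal exponents $\pm1$ and dropping zero blocks ends either in a positive final exponent $\geq2$ or in the trivial braid. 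For $n<0$ the one missing ingredient is an in-class unlink, for example $T((2,2),(3,-2),d)$: the closure of $\sigma_1^{s}(\sigma_1\sigma_2)^{-2}$ is $T(2,s-3)$, and $-2\geq 3n$. This fix does not affect the paper, whose uses of the reverse inclusion only involve $n<0$.
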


The Lorenz-like template \(\mathscr L(0,2n)\) is obtained from the Lorenz template \(\mathscr L(0,0)\) by adding \(2n\) signed half twists in the right strip. This gives a natural way to associate a Lorenz link to every link embedded in \(\mathscr L(0,2n)\). 

\begin{definition}\label{Def:AssociatedLorenzLink}
Let \(n\in\mathbb Z\), and let \(L\) be a link embedded in the
Lorenz-like template \(\mathscr L(0,2n)\). The \emph{associated Lorenz
link} of \(L\), denoted by \(L^+\), is the link embedded in the Lorenz
template \(\mathscr L(0,0)\) obtained by removing the \(2n\) signed half
twists from the right strip while keeping the same code word.
\end{definition}

Equivalently, \(L\) is obtained from \(L^+\) by inserting \(2n\) signed
half twists among the strands travelling through the right strip. This
operation changes the twisting in the right strip, but it does not change
the trip number.

We denote by \(\operatorname{br}(L)\) the braid index of a link \(L\),
that is, the minimum number of strands among all closed braid
representatives of \(L\).

\begin{lemma}\label{lem:trip-number-associated-lorenz}
Let \(n\in\mathbb Z\), let \(L\) be a link embedded in the Lorenz-like
template \(\mathscr L(0,2n)\), and let \(L^+\) be its associated Lorenz
link. Then
\[
\tau_{\mathscr L(0,2n)}(L)
=
\tau_{\mathscr L(0,0)}(L^+)
=
\operatorname{br}(L^+).
\]
\end{lemma}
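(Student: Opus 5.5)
The lemma consists of two equalities, and I will prove them separately.

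\emph{First equality.} By \refdef{AssociatedLorenzLink}, $L^+$ is defined by keeping every labelled code word of $L$ and only removing the $2n$ signed half twists from the right strip. The trip number $\tau_{\mathscr L(u,v)}$ is read off purely from the code words, as introduced after (\ref{Words}). It equals $\overline n=\sum_{i=1}^c n_i$, the total number of subwords $x_i^{k_i}y_i^{l_i}$, that is, the number of passages through the left ear followed by the right ear. Removing full twists from a strip does not change which strip each orbit traverses or in what order, so the code words, and hence the word periods $n_i$, are unchanged. (This is the same observation used in the proof of \refthm{UnionBunchesOfTurns_Gen}: with an even number $2n$ of half twists, the split template and the set of code words agree with those of the Lorenz template.) Therefore $\tau_{\mathscr L(0,2n)}(L)=\overline n=\tau_{\mathscr L(0,0)}(L^+)$.

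\emph{Second equality.} Since $L^+$ lies in the Lorenz template $\mathscr L(0,0)$, it is a Lorenz link. I will invoke the theorem of Franks and Williams~\cite{Franks-Williams:BraidsAndJonesPolynomial} (see also \cite{Birman-Williams:KnottedPeriodicOrbitsInDynamicalSystemsI, Birman-Kofman:NewTwistOnLorenzLinks}): for Lorenz links, the trip number equals the braid index. The argument runs in two directions.
\begin{itemize}
\item Upper bound: the Lorenz template can be isotoped so that each trip winds once around a braid axis, exhibiting $L^+$ as a closed positive braid on $\overline n$ strands. This gives $\operatorname{br}(L^+)\le\overline n$.
\item Lower bound: Franks--Williams use the Morton--Franks--Williams inequality on the HOMFLY polynomial of this positive braid, which has a full twist. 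This forces the braid index to be at least $\overline n$.
\end{itemize}

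The only subtle point is that the trip number depends on the template representative. I must therefore make sure the representative of $L^+$ in $\mathscr L(0,0)$ is exactly the one with the same code words as $L$. This holds by construction, so the cited equality $\tau=\operatorname{br}$ for Lorenz links applies directly to that representative. I expect no real obstacle beyond carefully citing the Franks--Williams result; the first equality is bookkeeping about code words.
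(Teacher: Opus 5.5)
Your proposal is correct and follows the paper's proof. The first equality holds because passing to $L^+$ keeps the code words and hence the word period $\overline n$, and the second equality is the Franks--Williams theorem that trip number equals braid index for Lorenz links; your sketch of that theorem (a positive $\overline n$-strand braid containing a full twist, plus the Morton--Franks--Williams inequality) only unpacks the citation that the paper invokes directly.
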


\begin{proof}
Passing from \(L\subset\mathscr L(0,2n)\) to its associated Lorenz link
\(L^+\subset\mathscr L(0,0)\) removes only the \(2n\) signed half twists
from the right strip. This operation preserves the code words
and therefore preserves the word period, which is the same as the trip number. Hence
\[
\tau_{\mathscr L(0,2n)}(L)
=
\tau_{\mathscr L(0,0)}(L^+).
\]

Since \(L^+\) is a Lorenz link, the theorem of Franks and
Williams \cite{Franks-Williams:BraidsAndJonesPolynomial} gives
\[
\tau_{\mathscr L(0,0)}(L^+)
=
\operatorname{br}(L^+).
\]
Combining the two equalities proves the result.
\end{proof}

We adopt the convention that boundary-parallel trivial components are
omitted when passing from a Lorenz-like representative to its associated
Lorenz link.

\begin{corollary}\label{Cor:AssociatedLorenzLinkTn}
Let \(n\in\mathbb Z\), let
\[
2\leq r_1<\cdots<r_k\leq r_{k+1},
\]
let \(s_1,\ldots,s_k>0\), and let \(d\geq0\). Then the T-link
\[
T((r_1,s_1),\ldots,(r_k,s_k))
\]
is the Lorenz link associated with the \(T^n\)-link
\[
T^n((r_1,s_1),\ldots,(r_k,s_k),(r_{k+1};d)).
\]
\end{corollary}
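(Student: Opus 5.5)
The plan is to unwind the definitions and track the $T^n$-link through \refthm{TnLinksLorenzLikeTemplates} and Definition~\ref{Def:AssociatedLorenzLink}. Put $R=r_{k+1}$ and $L=T^n((r_1,s_1),\ldots,(r_k,s_k),(R;d))$. By \refthm{TnLinksLorenzLikeTemplates}, $L$ embeds in $\mathscr L(0,n)$. The proof of that correspondence in \cite{dePaiva-Hui-Rodriguez:GeneralisedTLinks} gives an explicit embedding, which I will use concretely:
\begin{itemize}
\item the $d$ extra strands are trivial components running parallel to the boundary of the template;
\item the $R$ strands $d+1,\ldots,d+R$ are the strands passing through the right strip;
\item the half twists $\Delta^n_{R,d+R}$ are exactly the $n$ signed half twists built into the right strip of $\mathscr L(0,n)$;
\item the positive blocks $(\sigma_{d+1}\cdots\sigma_{d+r_i-1})^{s_i}$ are the usual T-link braid that a Lorenz link carries in the Lorenz template, as in Birman--Kofman \cite{Birman-Kofman:NewTwistOnLorenzLinks}.
\end{itemize}

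Next I apply Definition~\ref{Def:AssociatedLorenzLink}: delete the $n$ signed half twists from the right strip and keep the code word. On the braid, this deletes the factor $\Delta^n_{R,d+R}$ and leaves
\[
(\sigma_{d+1}\cdots\sigma_{d+r_1-1})^{s_1}\cdots(\sigma_{d+1}\cdots\sigma_{d+r_k-1})^{s_k}
\]
on $d+R$ strands. The definition only speaks of $2n$ half twists, so for odd $n$ I will read it the same way: remove all the twisting from the right strip.

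In this braid, strands $d+r_k+1,\ldots,d+R$ and strands $1,\ldots,d$ meet no generator. Their closures are split unknotted components that are boundary-parallel in the template. By the convention stated just before the corollary, these are omitted. After shifting indices down by $d$, what remains is the closure of $(\sigma_1\cdots\sigma_{r_1-1})^{s_1}\cdots(\sigma_1\cdots\sigma_{r_k-1})^{s_k}$, which is $T((r_1,s_1),\ldots,(r_k,s_k))$. By Birman--Kofman this is a Lorenz link.

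The main obstacle is the identification in the first step. I must make sure that the template embedding from \refthm{TnLinksLorenzLikeTemplates} really puts $\Delta^n_{R,d+R}$ into the twisted right strip, and puts the positive blocks into the untwisted Lorenz part with unchanged code words. Only then does removing the strip twists correspond to deleting $\Delta^n$ from the braid word. I would settle this by quoting the construction in \cite[Theorem~3.5]{dePaiva-Hui-Rodriguez:GeneralisedTLinks} directly, and otherwise by isotoping the twist of the right strip down to the branch line, where it acts on the $R$ strands entering the right ear. A secondary point is checking that the trivial components dropped are exactly the $d$ parallel strands and the untouched strands among the $R$.
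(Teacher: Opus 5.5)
Your proposal is correct and follows essentially the same route as the paper. Both use the explicit embedding from the proof of \cite[Theorem~3.5]{dePaiva-Hui-Rodriguez:GeneralisedTLinks} to identify $\Delta^n_{r_{k+1},d+r_{k+1}}$ with the signed half twists of the right strip. Both then delete that factor at the braid level, discard the boundary-parallel trivial components by the stated convention, and shift indices to obtain $(\sigma_1\cdots\sigma_{r_1-1})^{s_1}\cdots(\sigma_1\cdots\sigma_{r_k-1})^{s_k}$. You also list the discarded strands explicitly ($1,\ldots,d$ and $d+r_k+1,\ldots,d+r_{k+1}$) and note that Definition~\ref{Def:AssociatedLorenzLink} is only stated for an even number of half twists; these are small clarifications the paper leaves implicit.
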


\begin{proof}
The construction in the proof of
 \cite[Theorem~3.5]{dePaiva-Hui-Rodriguez:GeneralisedTLinks} gives a representative of
\[
T^n((r_1,s_1),\ldots,(r_k,s_k),(r_{k+1};d))
\]
in the Lorenz-like template \(\mathscr L(0,n)\), in which the factor
\[
\Delta^n_{r_{k+1},d+r_{k+1}}
\]
is realised by the \(n\) signed half twists in the right strip.

Passing to the associated Lorenz link removes these half twists and,
at the braid level, removes the factor
\(\Delta^n_{r_{k+1},d+r_{k+1}}\). The construction in the proof of \cite[Theorem~3.5]{dePaiva-Hui-Rodriguez:GeneralisedTLinks} shows that, after omitting the boundary-parallel
trivial components, the remaining Lorenz braid is equivalent to
\[
(\sigma_1\cdots\sigma_{r_1-1})^{s_1}
\cdots
(\sigma_1\cdots\sigma_{r_k-1})^{s_k}.
\]
Its closure is
\[
T((r_1,s_1),\ldots,(r_k,s_k)),
\]
which proves the result.
\end{proof}

Note that every link can be embedded in infinitely many universal
Lorenz-like templates. Indeed, Proposition~\ref{Prop:UniversalTemplates}
shows that \(\mathscr L(0,v)\) is universal for every negative integer
\(v\). On the other hand, the trip number depends on the chosen
Lorenz-like representative and not only on the ambient isotopy class of
the link, as the following proposition shows.

\begin{proposition}\label{Prop:TripNumberDependsOnRepresentative}
There exists a link \(L\subset S^3\) admitting Lorenz-like representatives
with arbitrarily large trip number. In particular, the trip number is not
an invariant of the ambient isotopy class of a link.
\end{proposition}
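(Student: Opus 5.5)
We need a single link $L$ with Lorenz-like representatives of arbitrarily large trip number. The plan is to take $L$ to be a fixed simple link, concretely the unknot or the $(d+1)$-component unlink, and to exhibit a family of representatives using \refthm{TnLinksLorenzLikeTemplates} and \refthm{GeneralisedTLinksT2nLinksTemplates}. Those results identify the code words, and hence the trip number, of a representative with the braid data.

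By \refthm{GeneralisedTLinksT2nLinksTemplates}, any generalised T-link
\[
L_R = T((r_1,s_1),\ldots,(r_k,s_k),(r_{k+1},s_{k+1}),d)
\]
with $s_{k+1}\ge nr_{k+1}$ is the $T^{2n}$-link
\[
T^{2n}((r_1,s_1),\ldots,(r_{k+1},s_{k+1}-nr_{k+1}),(r_{k+1};d)).
\]
This link sits in $\mathscr L(0,2n)$. By \refcor{AssociatedLorenzLinkTn} and Lemma~\ref{lem:trip-number-associated-lorenz}, its trip number equals $\operatorname{br}$ of the associated Lorenz link $T((r_1,s_1),\ldots,(r_{k+1},s_{k+1}-nr_{k+1}))$.

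The goal is therefore to produce, for each $R$, a braid of this form whose closure is always the same link, but whose associated Lorenz link has braid index growing with $R$.

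The natural choice is $k=1$ with $r_1=R-1$, $s_1=1$, $r_2=R$, $s_2=-1$ and $d=0$. The braid is then
\[
(\sigma_1\cdots\sigma_{R-2})(\sigma_{R-1}^{-1}\cdots\sigma_1^{-1}) = \sigma_{R-1}^{-1}.
\]
Its closure is the unknot, or after adjusting the strand count a trivial link; the condition $s_2\notin\{-1,0,1\}$ has to be handled, as discussed below. Taking $n=-1$ gives the exponent $s_2-nr_2 = R-1$. The associated Lorenz link is then $T((R-1,1),(R,R-1))$. Its braid index, by the Birman--Kofman formula \cite[Corollary~8]{Birman-Kofman:NewTwistOnLorenzLinks}, is roughly $\min(R,\text{number of }y\text{-strands})$, which grows like $R-1$. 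So the trip numbers of these unknot representatives are unbounded.

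The main obstacle is meeting the constraint $s_{k+1}\notin\{-1,0,1\}$ while keeping the closure fixed. To get around it I would use $s_2=-2$ together with a compensating first block. One option is blocks $(R-1,2)$, $(R,-2)$, where a cancellation like $(\sigma_1\cdots\sigma_{R-2})^2(\sigma_{R-1}^{-1}\cdots\sigma_1^{-1})^2$ should reduce by Markov destabilisation to a fixed small link. Alternatively, one can work directly with $T^{2n}$-links and vary $n$: for instance, take the unlink $T^{2n}((R;0))$ with a full twist count chosen so that the twist on $R$ strands is trivial up to Markov moves. Verifying that the closure is independent of $R$ via explicit braid relations and destabilisations is the step I expect to need care.

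The remaining steps are routine. One computes the trip number as the braid index of the positive T-link, using the Franks--Williams equality with trip number, and observes that it tends to infinity. The final sentence of the statement then follows at once, since a single isotopy class has representatives with different trip numbers.
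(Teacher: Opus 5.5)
\textbf{There is a genuine gap.} Your framework is the right one: realise one fixed link as a $T^{2n}$-link, place it in $\mathscr L(0,2n)$ by Theorem~\ref{Thm:TnLinksLorenzLikeTemplates}, and read off the trip number as $\operatorname{br}$ of the associated Lorenz link via Lemma~\ref{lem:trip-number-associated-lorenz}. But none of the families you write down represents a single link, and that is the whole content of the proposition.

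\begin{itemize}
\item \emph{Your ``natural choice''.} The braid $(\sigma_1\cdots\sigma_{R-2})(\sigma_1\cdots\sigma_{R-1})^{-1}$ is not equal to $\sigma_{R-1}^{-1}$; it is only conjugate to it. Either way, the closure of $\sigma_{R-1}^{-1}$ on $R$ strands is the unlink with $R-1$ components, not the unknot. The number of components grows with $R$.
\item \emph{The variant with blocks $(R-1,2),(R,-2)$.} Writing $a=\sigma_1\cdots\sigma_{R-2}$, this braid is conjugate to $a\sigma_{R-1}^{-1}a^{-1}\sigma_{R-1}^{-1}$. Its permutation is a $3$-cycle together with $R-3$ fixed points, so its closure has $R-2$ components.
\item \emph{The family $T^{2n}((R;0))$.} This is the closure of $n$ full twists on $R$ strands, i.e.\ the torus link $T(R,nR)$ with $R$ components (the $R$-component unlink when $n=0$).
\end{itemize}

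So the step you defer as ``needing care'' is exactly where the argument breaks. Cancelling a positive block against a negative block leaves the unused strands to close up into new components, so the link type is not held fixed.

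The missing idea is to raise the strand number by positive Markov stabilisation first, and only then split off negative full twists for the template to absorb. Set $A_R=\sigma_1\cdots\sigma_{R-1}$. For every $R\geq 3$ the closure of $\sigma_1^2A_R$ is the trefoil $T((2,3))$; this is the paper's choice. The closure of $A_R$ itself, the unknot, would serve equally well.

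Fix $n<0$ and write $A_R=A_R^{1-nR}A_R^{nR}$, where $A_R^{nR}=\Delta_{R,R}^{2n}$. This exhibits the trefoil as the $T^{2n}$-link $T^{2n}((2,2),(R,1-nR),(R;0))$. By Corollary~\ref{Cor:AssociatedLorenzLinkTn}, its associated Lorenz link is $T((2,2),(R,1-nR))$.

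Since $1-nR\geq R+1$, this positive $R$-braid contains a full twist on all $R$ strands. Franks--Williams then gives braid index exactly $R$, hence trip number exactly $R$. This replaces your rough Birman--Kofman estimate with an exact value. Working directly with $T^{2n}$-links in this way also removes the restriction $s_{k+1}\notin\{-1,0,1\}$ you were trying to work around.
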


\begin{proof}
Consider the T-link
\[
L=T((2,3)),
\]
which is the closure of the braid \(\sigma_1^3\). Its standard Lorenz template representative has trip number equal to two.

Let \(R\geq3\). Set
\[
A_R=\sigma_1\sigma_2\cdots\sigma_{R-1}.
\] 
The closure of
\[
\sigma_1^2A_R
=
\sigma_1^3\sigma_2\cdots\sigma_{R-1}
\]
is ambient isotopic to \(L\), since successive positive Markov destabilisations on
the rightmost strands reduce this braid to \(\sigma_1^3\).

Fix an integer \(n<0\). We may rewrite the last braid as
\[
\sigma_1^2A_R
=
\sigma_1^2A_R^{\,1-nR}A_R^{\,nR}.
\]
Since
\[
A_R^{\,R}=\Delta_{R,R}^2, 
\] we have
\[
A_R^{\,nR}=\Delta_{R,R}^{2n}. 
\] 
Thus the same link \(L\) is represented by the \(T^{2n}\)-link
\[
T^{2n}((2,2),(R,1-nR),(R;0)).
\]
By Theorem~\ref{Thm:TnLinksLorenzLikeTemplates}, this gives a
representative of \(L\) in the Lorenz-like template
\(\mathscr L(0,2n)\). By Corollary~\ref{Cor:AssociatedLorenzLinkTn}, its associated Lorenz link is
\[
L_R^+=T((2,2),(R,1-nR)).
\]

Since \(n<0\), we have
\[
1-nR\geq R+1.
\]
Consequently, the standard positive \(R\)-braid representing \(L_R^+\)
contains a full twist on all \(R\) strands. Therefore, by Franks and
Williams \cite[Corollary 2.4]{Franks-Williams:BraidsAndJonesPolynomial}, we have that
\[
\operatorname{br}(L_R^+)=R.
\]
By Lemma~\ref{lem:trip-number-associated-lorenz}, the corresponding
Lorenz-like representative of \(L\) has trip number
\[
\tau_{\mathscr L(0,2n)}(L)
=
\operatorname{br}(L_R^+)
=
R.
\]

Since \(R\) can be chosen arbitrarily large, the fixed link \(L\) admits
Lorenz-like representatives with arbitrarily large trip number.
\end{proof}

\begin{corollary}\label{Cor:VolumeBoundT2nLink}
Let \(n\in\mathbb Z\), and let
\[
L=
T^{2n}((r_1,s_1),\ldots,(r_k,s_k),(r_{k+1};d))
\]
be a \(T^{2n}\)-link in \(S^3\). Let \(\operatorname{Vol}(L)\) denote the
sum of the volumes of the hyperbolic pieces of \(S^3\setminus L\). If
\[
\beta=
\operatorname{br}\bigl(
T((r_1,s_1),\ldots,(r_k,s_k))
\bigr),
\]
then
\[
\operatorname{Vol}(L)
\leq
12v_{\textup{tet}}(\beta^2+3\beta).
\]
\end{corollary}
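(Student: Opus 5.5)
Our plan is to chain three earlier results: the template correspondence for \(T^{2n}\)-links, the identification of trip number with the braid index of the associated Lorenz link, and the template volume bound of \refthm{UpperBdTripNumber}.

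First, by Theorem~\ref{Thm:TnLinksLorenzLikeTemplates} applied with parameter \(2n\), the \(T^{2n}\)-link
\[
L=T^{2n}((r_1,s_1),\ldots,(r_k,s_k),(r_{k+1};d))
\]
has a representative \(L'\) embedded in the Lorenz-like template \(\mathscr L(0,2n)\). We take \(L'\) to be the specific representative built in the proof of \cite[Theorem~3.5]{dePaiva-Hui-Rodriguez:GeneralisedTLinks}, in which the factor \(\Delta^{2n}_{r_{k+1},d+r_{k+1}}\) is realised by the \(2n\) signed half twists in the right strip. Since \(0\) and \(2n\) are both even, \refthm{UpperBdTripNumber} applies to \(L'\) with \(u=0\), \(v=2n\). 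This gives
\[
\operatorname{Vol}(L)\leq 12v_{\textup{tet}}\bigl(\overline n^2+3\overline n\bigr),
\qquad \overline n=\tau_{\mathscr L(0,2n)}(L').
\]

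Second, we identify \(\overline n\) with \(\beta\). By Lemma~\ref{lem:trip-number-associated-lorenz}, \(\tau_{\mathscr L(0,2n)}(L')=\operatorname{br}((L')^+)\). By Corollary~\ref{Cor:AssociatedLorenzLinkTn}, the associated Lorenz link \((L')^+\) is \(T((r_1,s_1),\ldots,(r_k,s_k))\). Hence \(\overline n=\beta\), and substituting into the bound above gives the claim. The function \(x\mapsto x^2+3x\) is increasing for \(x\geq 0\), so even an inequality \(\overline n\le\beta\) would suffice.

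The main obstacle is the convention that boundary-parallel trivial components are omitted when passing to the associated Lorenz link. If the \(d\) extra strands, or trivial pieces, contribute trips in \(L'\) but are dropped in \((L')^+\), then Lemma~\ref{lem:trip-number-associated-lorenz} could give \(\overline n\) strictly larger than \(\beta\), and the inequality would run the wrong way. We would handle this as follows.
\begin{enumerate}
\item Check in the construction of \cite[Theorem~3.5]{dePaiva-Hui-Rodriguez:GeneralisedTLinks} that those trivial components are unknotted, unlinked circles separated from the rest of \(L'\) by an essential sphere, or are otherwise boundary parallel.
\item Deduce that removing them does not change the sum of the hyperbolic volumes in the JSJ decomposition: a split unknot contributes no hyperbolic piece, since the complement of a split link is a connected sum.
\item Apply \refthm{UpperBdTripNumber} to the template link \(L''\) obtained by deleting those components. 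Its trip number is exactly the trip number of the T-link \(T((r_1,s_1),\ldots,(r_k,s_k))\) placed in \(\mathscr L(0,2n)\), which equals \(\beta\) by the Franks--Williams theorem.
\end{enumerate}
In the degenerate case \(k=0\), the link \(L\) is a closure of pure twisting, hence a torus link plus unknots. It has no hyperbolic pieces, so the bound holds trivially.
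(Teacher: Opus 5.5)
Your main argument is the paper's proof: realise $L$ in $\mathscr L(0,2n)$ by Theorem~\ref{Thm:TnLinksLorenzLikeTemplates}, bound the volume by Theorem~\ref{Thm:UpperBdTripNumber}, and obtain $\tau=\beta$ exactly from Corollary~\ref{Cor:AssociatedLorenzLinkTn} together with Lemma~\ref{lem:trip-number-associated-lorenz}. The paper takes that equality as given, since the omission convention is built into those two results, so your contingency plan is unnecessary; note also that as written it would only justify deleting genuinely split components, such as the $d$ untouched strands, and not the strands $d+r_k+1,\ldots,d+r_{k+1}$ (when $r_k<r_{k+1}$), which for $n\neq0$ are linked with the rest of $L$ through $\Delta^{2n}_{r_{k+1},d+r_{k+1}}$.
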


\begin{proof}
By Theorem~\ref{Thm:TnLinksLorenzLikeTemplates}, the link \(L\) admits the
corresponding representative in the Lorenz-like template
\(\mathscr L(0,2n)\). Let \(\tau\) denote the trip number of this
representative. By Theorem~\ref{Thm:UpperBdTripNumber},
\[
\operatorname{Vol}(L)
\leq
12v_{\textup{tet}}(\tau^2+3\tau).
\]

By Corollary~\ref{Cor:AssociatedLorenzLinkTn}, applied with \(2n\) in place
of \(n\), the Lorenz link associated to this representative is
\[
L^+
=
T((r_1,s_1),\ldots,(r_k,s_k)).
\]
Lemma~\ref{lem:trip-number-associated-lorenz} therefore gives
\[
\tau
=
\operatorname{br}(L^+)
=
\operatorname{br}\bigl(
T((r_1,s_1),\ldots,(r_k,s_k))
\bigr)
=
\beta.
\]
Substituting \(\tau=\beta\) into the preceding inequality yields
\[
\operatorname{Vol}(L)
\leq
12v_{\textup{tet}}(\beta^2+3\beta). \qedhere
\]
\end{proof}

\begin{theorem}\label{Thm:VolumeBoundAssociatedLorenzLink}
Let
\[
L=
T((r_1,s_1),\ldots,(r_k,s_k),(r_{k+1},s_{k+1}),d)
\]
be a generalised T-link, and suppose that
\[
s_{k+1}\geq nr_{k+1}
\]
for some integer \(n\). Let \(\operatorname{Vol}(L)\) denote the sum of
the volumes of the hyperbolic pieces of \(S^3\setminus L\). Define
\[
L_n^+
=
T((r_1,s_1),\ldots,(r_k,s_k),
(r_{k+1},s_{k+1}-nr_{k+1})),
\]
where the final block is omitted if
\(s_{k+1}-nr_{k+1}=0\). Let
\[
\beta=\operatorname{br}(L_n^+).
\]
Then
\[
\operatorname{Vol}(L)
\leq
12v_{\textup{tet}}(\beta^2+3\beta).
\]
\end{theorem}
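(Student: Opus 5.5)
The plan is to reduce the statement directly to Corollary~\ref{Cor:VolumeBoundT2nLink} by way of Theorem~\ref{Thm:GeneralisedTLinksT2nLinksTemplates}. Since \(s_{k+1}\geq nr_{k+1}\), Theorem~\ref{Thm:GeneralisedTLinksT2nLinksTemplates} shows that \(L\) is equivalent to the \(T^{2n}\)-link
\[
T^{2n}((r_1,s_1),\ldots,(r_k,s_k),(r_{k+1},s_{k+1}-nr_{k+1}),(r_{k+1};d)),
\]
with the final block omitted when \(s_{k+1}-nr_{k+1}=0\). Because \(\operatorname{Vol}\) depends only on the complement, it suffices to bound the volume of this \(T^{2n}\)-link.

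I then apply Corollary~\ref{Cor:VolumeBoundT2nLink}, taking the list of blocks \((r_1,s_1),\ldots,(r_k,s_k),(r_{k+1},s_{k+1}-nr_{k+1})\) (or only the first \(k\) blocks when the last one vanishes) and taking \(R=r_{k+1}\). The hypotheses of Definition~\ref{Def:TnLink} require
\[
2\leq r_1<\cdots<r_k\leq r_{k+1}=R
\]
and positive exponents. These hold here: \(r_k<r_{k+1}\) is given, and \(s_{k+1}-nr_{k+1}\geq 0\), with the zero case handled by omitting the block. The corollary then gives the bound with
\[
\beta=\operatorname{br}\bigl(T((r_1,s_1),\ldots,(r_k,s_k),(r_{k+1},s_{k+1}-nr_{k+1}))\bigr)=\operatorname{br}(L_n^+),
\]
which is exactly the claimed inequality.

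The main point needing care is that Corollary~\ref{Cor:VolumeBoundT2nLink} was stated with blocks satisfying \(r_i\leq R\) and \(r_k\le r_{k+1}\). Here the last positive block has \(r=R\), so the statement must allow a block of size equal to \(R\). This is permitted by Definition~\ref{Def:TnLink} and by Corollary~\ref{Cor:AssociatedLorenzLinkTn}, whose hypothesis is \(r_k\leq r_{k+1}\). I would therefore verify explicitly that the associated Lorenz link of the \(L\) representative in \(\mathscr L(0,2n)\) is \(L_n^+\) by Corollary~\ref{Cor:AssociatedLorenzLinkTn}. Its trip number then equals \(\operatorname{br}(L_n^+)\) by Lemma~\ref{lem:trip-number-associated-lorenz}, and Theorem~\ref{Thm:UpperBdTripNumber} applies with \(u=0\), \(v=2n\), both even.

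A second subtlety concerns \(n>0\) and the case where all blocks are omitted. If \(k\) blocks remain, \(L_n^+\) is a nonempty T-link. Degenerate cases such as trivial links have \(\operatorname{Vol}=0\), so the inequality holds trivially.
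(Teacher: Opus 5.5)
Your proposal is correct and takes essentially the same route as the paper. The paper's proof applies Theorem~\ref{Thm:GeneralisedTLinksT2nLinksTemplates}, Theorem~\ref{Thm:TnLinksLorenzLikeTemplates}, Corollary~\ref{Cor:AssociatedLorenzLinkTn} (with $2n$ in place of $n$), Theorem~\ref{Thm:UpperBdTripNumber} and Lemma~\ref{lem:trip-number-associated-lorenz} directly; this is exactly the chain packaged in Corollary~\ref{Cor:VolumeBoundT2nLink} that you cite and then unpack. Your remark on degenerate cases is unnecessary: $k\geq 1$ in Definition~\ref{Def:GeneralisedTLink}, so $L_n^+$ always keeps at least the first $k$ blocks.
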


\begin{proof}
By Theorem~\ref{Thm:GeneralisedTLinksT2nLinksTemplates}, the link \(L\) is
equivalent to the \(T^{2n}\)-link
\[
L'
=
T^{2n}((r_1,s_1),\ldots,(r_k,s_k),
(r_{k+1},s_{k+1}-nr_{k+1}),(r_{k+1};d)),
\]
where the block
\[
(r_{k+1},s_{k+1}-nr_{k+1})
\]
is omitted if \(s_{k+1}-nr_{k+1}=0\).

By Theorem~\ref{Thm:TnLinksLorenzLikeTemplates}, the link \(L'\) admits a
representative
\[
\widetilde L\subset\mathscr L(0,2n).
\]
Moreover, by replacing $n$ with $2n$ in Corollary~\ref{Cor:AssociatedLorenzLinkTn}, we have that the Lorenz link associated to \(\widetilde L\) is precisely \(L_n^+\).

Let
\[
\tau=\tau_{\mathscr L(0,2n)}(\widetilde L).
\]
Since \(L\), \(L'\), and \(\widetilde L\) represent the same link type,
Theorem~\ref{Thm:UpperBdTripNumber} gives
\[
\operatorname{Vol}(L)
=
\operatorname{Vol}(L')
\leq
12v_{\textup{tet}}(\tau^2+3\tau).
\]
By Lemma~\ref{lem:trip-number-associated-lorenz},
\[
\tau
=
\operatorname{br}(L_n^+)
=
\beta.
\]
Substituting \(\tau=\beta\) into the preceding inequality yields
\[
\operatorname{Vol}(L)
\leq
12v_{\textup{tet}}(\beta^2+3\beta). \qedhere
\]
\end{proof}

\begin{definition}\label{Def:LorenzBraidIndex}
Let \(L\subset S^3\) be a link. Consider all generalised T-link
representations of \(L\),
\[
L=
T((r_1,s_1),\ldots,(r_k,s_k),(r_{k+1},s_{k+1}),d),
\]
and all integers \(n\) satisfying
\[
s_{k+1}\geq n r_{k+1}.
\]
For each such pair of choices, let
\[
L^+_n
=
T((r_1,s_1),\ldots,(r_k,s_k),
(r_{k+1},s_{k+1}-nr_{k+1}))
\]
be the associated Lorenz link, where the final block is omitted if
\(s_{k+1}-nr_{k+1}=0\). The \emph{Lorenz braid index} of \(L\), denoted by
\(\operatorname{lbi}(L)\), is defined by
\[
\operatorname{lbi}(L)
=
\min \operatorname{br}(L^+_n),
\]
where the minimum is taken over all generalised T-link representations
of \(L\) and all integers \(n\) satisfying \(s_{k+1}\geq n r_{k+1}\).
\end{definition}

Since every link in \(S^3\) admits a generalised T-link
representation, and since one may choose an integer \(n\) satisfying
\(s_{k+1}\geq nr_{k+1}\), the set
\[
\left\{
\operatorname{br}(L_n^+)
\right\}
\]
appearing in Definition~\ref{Def:LorenzBraidIndex} is nonempty. Its
elements are non-negative integers. Hence, by the well-ordering principle,
this set has a least element, so \(\operatorname{lbi}(L)\) is
well-defined. Since the minimum is taken over all generalised T-link
representations of \(L\), the resulting value depends only on the ambient
isotopy class of \(L\).

The Lorenz braid index is the braid-theoretic counterpart of the minimal
trip number introduced in Definition~\ref{Def:MinTripNumber}. The two
invariants agree. 

\begin{proposition}\label{Prop:MinTripNumberEqualsLBI}
For every link \(L\subset S^3\),
\[
\widetilde n(L)=\operatorname{lbi}(L),
\]
where \(\widetilde n(L)\) denotes the minimal trip number of \(L\).
\end{proposition}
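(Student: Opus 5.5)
We prove the two inequalities \(\widetilde n(L)\leq\operatorname{lbi}(L)\) and \(\operatorname{lbi}(L)\leq\widetilde n(L)\) separately. In both directions the dictionary is the same: Theorem~\ref{Thm:GeneralisedTLinksT2nLinksTemplates} links generalised T-links with \(T^{2n}\)-links; Theorem~\ref{Thm:TnLinksLorenzLikeTemplates} links \(T^{2n}\)-links with representatives in \(\mathscr L(0,2n)\); and Lemma~\ref{lem:trip-number-associated-lorenz} together with Corollary~\ref{Cor:AssociatedLorenzLinkTn} identifies trip numbers with braid indices of associated Lorenz links.

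For \(\widetilde n(L)\leq\operatorname{lbi}(L)\), choose a generalised T-link representation \(T((r_1,s_1),\ldots,(r_{k+1},s_{k+1}),d)\) of \(L\) and an integer \(n\) with \(s_{k+1}\geq nr_{k+1}\) that realise the minimum \(\operatorname{br}(L_n^+)=\operatorname{lbi}(L)\). As in the proof of Theorem~\ref{Thm:VolumeBoundAssociatedLorenzLink}, this yields a representative \(\widetilde L\subset\mathscr L(0,2n)\) whose associated Lorenz link is \(L_n^+\). Lemma~\ref{lem:trip-number-associated-lorenz} then gives \(\tau_{\mathscr L(0,2n)}(\widetilde L)=\operatorname{lbi}(L)\).

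Definition~\ref{Def:MinTripNumber} only allows \(m<0\), so we must ensure the template exponent \(2n\) is negative. Decreasing \(n\) preserves the constraint \(s_{k+1}\geq nr_{k+1}\), so we replace \(n\) by a negative \(n'\leq n\). The associated Lorenz link changes from \(L_n^+\) to \(L_{n'}^+\), which has \((n-n')\) extra full twists on \(r_{k+1}\) strands in its final block. Its braid index equals \(\operatorname{br}(L_n^+)\) when \(r_{k+1}\leq\operatorname{br}(L_n^+)\), but it may jump to \(r_{k+1}\) otherwise (compare the proof of Proposition~\ref{Prop:TripNumberDependsOnRepresentative}). This sign issue is the main obstacle. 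I would handle it in one of two ways. The first option is to check that the minimising representation can be taken with \(n\leq0\) from the outset, arguing that when \(n>0\) the last block already contains full twists on all \(r_{k+1}\) strands, so \(\operatorname{br}(L_n^+)=r_{k+1}\) by Franks--Williams \cite[Corollary~2.4]{Franks-Williams:BraidsAndJonesPolynomial} and adding further twists does not change it. The second option, if \(n=0\) is allowed, is to observe that \(\mathscr L(0,0)\) representatives can be transported to \(\mathscr L(0,-2)\) without changing the trip number.

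For \(\operatorname{lbi}(L)\leq\widetilde n(L)\), let \(L'\subset\mathscr L(0,2m)\) with \(m<0\) realise \(\widetilde n(L)\). By Theorem~\ref{Thm:TnLinksLorenzLikeTemplates}, \(L'\) is a \(T^{2m}\)-link \(T^{2m}((r_1,s_1),\ldots,(r_k,s_k),(R;d))\), and by Corollary~\ref{Cor:AssociatedLorenzLinkTn} together with Lemma~\ref{lem:trip-number-associated-lorenz} we have \(\widetilde n(L)=\operatorname{br}(T((r_1,s_1),\ldots,(r_k,s_k)))\).

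We next convert \(L'\) into a generalised T-link. Since \(\Delta^{2m}_{R,d+R}=(\sigma_{d+1}\cdots\sigma_{d+R-1})^{mR}\), the braid becomes \(\cdots(\sigma_{d+1}\cdots\sigma_{d+R-1})^{mR}\), with last block \((R,mR)\). If \(r_k=R\), this last block merges with \((r_k,s_k)\) to give \((R,s_k+mR)\). With \(n=m\), the definition of \(L_m^+\) then returns exactly \(T((r_1,s_1),\ldots,(r_k,s_k))\) (or omits the final block appropriately), and \(s_{k+1}\geq mr_{k+1}\) holds trivially. This matches the converse direction of Theorem~\ref{Thm:GeneralisedTLinksT2nLinksTemplates}, which I would invoke directly instead of redoing the braid algebra. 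Degenerate cases also need checking: \(s_{k+1}\in\{-1,0,1\}\), \(r_k=R\), and trivial links. The first two are handled by the merging and omission conventions, and the trivial links by the \(T((2,1),d)\) clause. These cases are routine, so the sign bookkeeping in the first inequality remains the main point needing care.
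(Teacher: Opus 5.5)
\paragraph{Where the proposal fails.} Your overall route matches the paper's: you prove two inequalities using Theorem~\ref{Thm:GeneralisedTLinksT2nLinksTemplates}, Theorem~\ref{Thm:TnLinksLorenzLikeTemplates}, Corollary~\ref{Cor:AssociatedLorenzLinkTn} and Lemma~\ref{lem:trip-number-associated-lorenz}. Your direction $\operatorname{lbi}(L)\le\widetilde n(L)$ is the paper's first half. The gap is in $\widetilde n(L)\le\operatorname{lbi}(L)$. You correctly notice that Definition~\ref{Def:MinTripNumber} allows only $m<0$, while Definition~\ref{Def:LorenzBraidIndex} allows every integer $n$. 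Neither of your repairs closes this.

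Your first option rests on a false claim. If $n>0$, then $s_{k+1}\ge r_{k+1}$, so it is $L$ itself (i.e.\ $L_0^+$) whose last block contains a full twist. The last block of $L_n^+$ has exponent $s_{k+1}-nr_{k+1}$, which can be smaller than $r_{k+1}$, so $\operatorname{br}(L_n^+)=r_{k+1}$ does not follow. A concrete case is $L=T((2,3),(5,6))$:
\begin{itemize}
\item The only admissible positive value is $n=1$. Then $L_1^+=T((2,3),(5,1))$ is the closure of $\sigma_1^4\sigma_2\sigma_3\sigma_4$, which destabilises to $\sigma_1^4$, so $\operatorname{br}(L_1^+)=2$.
\item For every $n'\le0$, $L_{n'}^+=T((2,3),(5,6-5n'))$ is a positive $5$-braid containing a full twist on all five strands, so $\operatorname{br}(L_{n'}^+)=5$ by Franks--Williams.
\end{itemize}
So lowering $n$ inside a fixed representation can strictly increase the value, and the minimum may be attained only at $n>0$.

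Your second option is asserted without proof and at best covers $n=0$. The obvious compensation writes $\widehat A=\widehat{(A\Delta^2_R)\Delta^{-2}_R}$, which changes the associated Lorenz link to $\widehat{A\Delta_R^2}$, whose braid index is $R$. Trip number is therefore not preserved unless you bring in extra input, for example a T-braid representative of $\widehat A$ on exactly $\operatorname{br}(\widehat A)$ strands. You do not supply that input, and it would not help for $n>0$ anyway, where the twists sit on $r_{k+1}$ strands.

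\paragraph{How the paper handles this.} The paper never confronts the sign issue. In its second half it takes $\widetilde L\subset\mathscr L(0,2n)$ for whatever $n$ realises $\operatorname{lbi}(L)$ and writes $\widetilde n(L)\le\tau_{\mathscr L(0,2n)}(\widetilde L)$. It also treats the unlinks $T((2,1),d)$ inside $\mathscr L(0,2n)$ with $n>0$. In effect it reads Definition~\ref{Def:MinTripNumber} with $m$ ranging over all integers. This is harmless for the volume bound, because Theorem~\ref{Thm:UpperBdTripNumber} holds for all even $u,v$.

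Under that reading your sign discussion is unnecessary and your argument coincides with the paper's. Under the literal reading $m<0$, one further ingredient is needed: any $\mathscr L(0,2n)$-representative with $n\ge0$ must be replaceable by an $\mathscr L(0,2m)$-representative with $m<0$ of no larger trip number. Neither your proposal nor the paper supplies this, and the example above shows it cannot come from varying $n$ within a single generalised T-link representation.
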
 

\begin{proof}
We first prove that
\[
\operatorname{lbi}(L)\leq \widetilde n(L).
\]
Let
\[
L'\subset \mathscr L(0,2n)
\]
be a Lorenz-like representative of \(L\). By
Theorem~\ref{Thm:TnLinksLorenzLikeTemplates}, the link \(L'\) is represented
by a \(T^{2n}\)-link.

Suppose first that this \(T^{2n}\)-link is not one of the %exceptional
trivial links \(T((2,1),d)\) arising when \(n>0\). Then it
determines a generalised T-link representation
\[
L=
T((r_1,s_1),\ldots,(r_k,s_k),
(r_{k+1},s_{k+1}),d)
\]
satisfying
\[
s_{k+1}\geq nr_{k+1}.
\]
Moreover, this representation is equivalent to
\[
T^{2n}((r_1,s_1),\ldots,(r_k,s_k),
(r_{k+1},s_{k+1}-nr_{k+1}),(r_{k+1};d)).
\]
By Corollary~\ref{Cor:AssociatedLorenzLinkTn}, applied with \(2n\) in
place of \(n\), the associated Lorenz link is
\[
L_n^+
=
T((r_1,s_1),\ldots,(r_k,s_k),
(r_{k+1},s_{k+1}-nr_{k+1})).
\]
Lemma~\ref{lem:trip-number-associated-lorenz} therefore gives
\[
\tau_{\mathscr L(0,2n)}(L')
=
\operatorname{br}(L_n^+).
\]
Since \(L_n^+\) is one of the associated Lorenz links considered in the
definition of \(\operatorname{lbi}(L)\), we obtain
\[
\operatorname{lbi}(L)
\leq
\tau_{\mathscr L(0,2n)}(L').
\]
Taking the minimum over all Lorenz-like representatives \(L'\) gives
\[
\operatorname{lbi}(L)\leq \widetilde n(L).
\]

Conversely, choose a generalised T-link representation
\[
L=
T((r_1,s_1),\ldots,(r_k,s_k),
(r_{k+1},s_{k+1}),d)
\]
and an integer \(n\) satisfying
\[
s_{k+1}\geq nr_{k+1}
\]
such that
\[
\operatorname{lbi}(L)=\operatorname{br}(L_n^+).
\]
By Theorem~\ref{Thm:GeneralisedTLinksT2nLinksTemplates}, this generalised
T-link is equivalent to the \(T^{2n}\)-link
\[
\widetilde L = T^{2n}((r_1,s_1),\ldots,(r_k,s_k),
(r_{k+1},s_{k+1}-nr_{k+1}),(r_{k+1};d)).
\]
Theorem~\ref{Thm:TnLinksLorenzLikeTemplates} gives a representative
\[
\widetilde L\subset\mathscr L(0,2n).
\]
By Corollary~\ref{Cor:AssociatedLorenzLinkTn}, its associated Lorenz link
is precisely \(L_n^+\). Hence
\[
\widetilde n(L)
\leq
\tau_{\mathscr L(0,2n)}(\widetilde L)
=
\operatorname{br}(L_n^+)
=
\operatorname{lbi}(L),
\]
where the equality in the middle follows from
Lemma~\ref{lem:trip-number-associated-lorenz}.

Combining the two inequalities gives
\[
\widetilde n(L)=\operatorname{lbi}(L).
\]

It remains to consider the trivial links \(T((2,1),d)\) arising when
\(n>0\). The link \(T((2,1),d)\) is the unlink with \(d+1\) components.
It admits a representative in \(\mathscr L(0,2n)\) consisting of
\(d+1\) boundary-parallel trivial components contained in the left strip
of the template. None of these components travels from one strip to the
other. Hence this representative has trip number zero, and therefore
\[
\widetilde n\bigl(T((2,1),d)\bigr)=0.
\]

Since this representative does not meet the right strip, removing the
\(2n\) half twists produces no nontrivial Lorenz component. Thus its
associated Lorenz link is the empty link. Adopting the convention
\[
\operatorname{br}(\varnothing)=0,
\]
we obtain
\[
\operatorname{lbi}\bigl(T((2,1),d)\bigr)=0.
\]
Consequently,
\[
\widetilde n\bigl(T((2,1),d)\bigr)
=
\operatorname{lbi}\bigl(T((2,1),d)\bigr)
=
0. \qedhere
\]
\end{proof}

\begin{theorem}\label{Thm:VolumeLorenzBraidIndex}
Let \(L\subset S^3\) be a link, and let \(\operatorname{Vol}(L)\) denote the
sum of the volumes of the hyperbolic pieces in the JSJ decomposition of
\(S^3\setminus L\). Then
\[
\operatorname{Vol}(L)
\leq
12v_{\mathrm{tet}}
\bigl(
\operatorname{lbi}(L)^2
+
3\operatorname{lbi}(L)
\bigr),
\]
where \(v_{\mathrm{tet}}\) denotes the volume of the regular ideal
tetrahedron.
\end{theorem}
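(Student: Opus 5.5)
The plan is to derive the theorem directly from the minimisation already built into the definition of $\operatorname{lbi}(L)$, using Theorem~\ref{Thm:VolumeBoundAssociatedLorenzLink} for a minimising representation and handling the degenerate trivial case separately. The function $f(x)=12v_{\mathrm{tet}}(x^2+3x)$ is increasing on $[0,\infty)$, so it suffices to bound $\operatorname{Vol}(L)$ by $f(\beta)$ for one admissible choice with $\beta=\operatorname{lbi}(L)$.

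\emph{Main case.} By Theorem~\ref{Thm:AllLinksGeneralisedTLinks}, $L$ admits a generalised T-link representation. The set of values $\operatorname{br}(L_n^+)$ is nonempty and consists of non-negative integers, so the minimum in Definition~\ref{Def:LorenzBraidIndex} is attained. I will therefore fix a representation
\[
L=T((r_1,s_1),\ldots,(r_{k+1},s_{k+1}),d)
\]
together with an integer $n$ such that $s_{k+1}\geq nr_{k+1}$ and $\operatorname{br}(L_n^+)=\operatorname{lbi}(L)$. Applying Theorem~\ref{Thm:VolumeBoundAssociatedLorenzLink} to this representation and this $n$ gives
\[
\operatorname{Vol}(L)\leq 12v_{\mathrm{tet}}(\beta^2+3\beta),
\qquad \beta=\operatorname{br}(L_n^+)=\operatorname{lbi}(L),
\]
which is the claimed inequality.

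\emph{Alternative route.} One could instead combine Corollary~\ref{Cor:UpperBdTripNumber} with Proposition~\ref{Prop:MinTripNumberEqualsLBI}, which gives $\operatorname{Vol}(L)\leq 12v_{\mathrm{tet}}(\widetilde n^2+3\widetilde n)$ with $\widetilde n=\operatorname{lbi}(L)$. This is a one-line argument and automatically covers the case treated below, where the minimal trip number is realised by an unlink representative. I would present this as the main proof and give the direct argument above as a check.

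\emph{Degenerate case and main obstacle.} The only delicate point is bookkeeping at the boundary: when $s_{k+1}-nr_{k+1}=0$ the final block is omitted, and when $\operatorname{lbi}(L)=0$ the associated Lorenz link may be empty, with the convention $\operatorname{br}(\varnothing)=0$. In the latter case $L$ is an unlink $T((2,1),d)$, whose complement has no hyperbolic pieces. Hence $\operatorname{Vol}(L)=0\leq 0$, consistent with the bound. I will check explicitly that Theorem~\ref{Thm:VolumeBoundAssociatedLorenzLink} and the conventions for omitted blocks and boundary-parallel components are compatible, so that the inequality holds uniformly for all links.
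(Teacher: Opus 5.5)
Your direct argument is the paper's proof. Realise $\operatorname{lbi}(L)$ by a generalised T-link representation and an admissible $n$ (Theorem~\ref{Thm:AllLinksGeneralisedTLinks} plus well-ordering), then apply Theorem~\ref{Thm:VolumeBoundAssociatedLorenzLink}. Keep that as the proof rather than promoting the alternative route, which is weaker than it looks.

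The alternative route needs the inequality $\widetilde n(L)\leq\operatorname{lbi}(L)$ from Proposition~\ref{Prop:MinTripNumberEqualsLBI}. However, Definition~\ref{Def:MinTripNumber} minimises only over templates $\mathscr L(0,2m)$ with $m<0$. Definition~\ref{Def:LorenzBraidIndex}, by contrast, allows every integer $n$ with $s_{k+1}\geq nr_{k+1}$, including $n\geq0$. Section~\ref{Sec:BoundedVolumeUnboundedBraid} bounds $\operatorname{lbi}(K_j)$ in exactly this way, by removing $n=k'_j>0$ positive full twists. For a minimiser with $n\geq0$, the paper's argument for this direction produces a representative in $\mathscr L(0,2n)$, and that representative does not enter the minimum defining $\widetilde n(L)$. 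No argument is given that some template $\mathscr L(0,2m)$ with $m<0$ achieves the same trip number. The direct route never has to compare $\widetilde n$ with $\operatorname{lbi}$. This is because Theorem~\ref{Thm:UpperBdTripNumber}, and hence Theorem~\ref{Thm:VolumeBoundAssociatedLorenzLink}, applies to $\mathscr L(0,2n)$ for every integer $n$.

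Your degenerate case is also unnecessary for the direct route. Every $L_n^+$ in Definition~\ref{Def:LorenzBraidIndex} contains the block $(r_1,s_1)$, so it is never empty. Hence $\operatorname{lbi}(L)\geq1$, and the minimum is attained by an honest representation. The convention $\operatorname{br}(\varnothing)=0$ appears only inside the proof of Proposition~\ref{Prop:MinTripNumberEqualsLBI}. Treating that case separately does no harm, since an unlink has $\operatorname{Vol}=0$.
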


\begin{proof}
By Theorem~\ref{Thm:AllLinksGeneralisedTLinks} and the definition of the Lorenz braid index, there exists a generalised T-link representation of $L$,
\[
L=
T((r'_1,s'_1),\ldots,(r'_\ell,s'_\ell),
(r'_{\ell+1},s'_{\ell+1}),d'),
\]
and an integer \(n\) satisfying
\[
s'_{\ell+1}\geq n r'_{\ell+1}
\]
such that the associated Lorenz link
\[
L_n^+
=
T((r'_1,s'_1),\ldots,(r'_\ell,s'_\ell),
(r'_{\ell+1},s'_{\ell+1}-nr'_{\ell+1}))
\]
satisfies
\[
\operatorname{br}(L_n^+)=\operatorname{lbi}(L).
\]
Here the final block is omitted if
\(s'_{\ell+1}-nr'_{\ell+1}=0\).

Applying Theorem~\ref{Thm:VolumeBoundAssociatedLorenzLink} to this
generalised T-link representation gives
\[
\operatorname{Vol}(L)
\leq
12v_{\textup{tet}}
\bigl(
\operatorname{br}(L_n^+)^2
+
3\operatorname{br}(L_n^+)
\bigr).
\]
Since
\[
\operatorname{br}(L_n^+)=\operatorname{lbi}(L),
\]
we conclude that
\[
\operatorname{Vol}(L)
\leq
12v_{\textup{tet}}
\bigl(
\operatorname{lbi}(L)^2
+
3\operatorname{lbi}(L)
\bigr). \qedhere
\]
\end{proof}

\section{Bounded volume and unbounded classical complexity}
\label{Sec:BoundedVolumeUnboundedBraid}
In this section, we show that the Lorenz braid index has a better ability to capture geometric
information in some settings compared to the classical topological invariants. 

More precisely, we construct an explicit family of hyperbolic Lorenz knots
whose classical braid index and Seifert genus both tend to infinity, while
their Lorenz braid indices remain uniformly bounded. By \refthm{VolumeLorenzBraidIndex}, 
the volumes of these knot complements are therefore uniformly
bounded.

The construction is based on adding many full twists on a growing number of
strands. These full twists increase the classical braid index and the genus
of the resulting knots. However, from the point of view of Lorenz-like
templates, this twisting can be removed to recover an associated Lorenz
link whose braid index is independent of the amount of twisting. Thus the
Lorenz braid index records the bounded parent-link complexity underlying
the construction, rather than the large twisting complexity visible in a
classical braid representative.

The main technical point is to prove that the knots in the family are
hyperbolic. We first recall a hyperbolicity criterion for positive
generalised \(T\)-knots and a braid-index computation for the associated
positive part. We then use these facts, together with results on full
twists and generalised cabling, to rule out the torus and satellite cases
for the twisted family. This produces hyperbolic Lorenz knots with bounded
Lorenz braid index but unbounded classical braid index and unbounded Seifert
genus.
 
The following lemma is precisely the hyperbolicity criterion proved in
\cite[Theorem~7.10]{de2024lorenz}, using the improvement of
\cite[Corollary~1.3]{twofulltwists} which allows the case \(k\geq 1\).

\begin{lemma}\label{lem:positive-part-hyperbolic}
Let \(p,q\) be positive coprime integers, and suppose that
\[
p>q\geq r_n>\cdots>r_1>1.
\]
Then, for every \(k\geq1\), the generalised \(T\)-knot
\[
T((r_1,s_1),\ldots,(r_n,s_n),(p,kp+q))
\]
is hyperbolic.
\end{lemma}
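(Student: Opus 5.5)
The plan is to follow the Thurston trichotomy for knot complements. Set
\[
K_k=T((r_1,s_1),\ldots,(r_n,s_n),(p,kp+q)).
\]
This is a positive braid closure on \(p\) strands, hence a Lorenz knot. Since \(p\) and \(q\) are coprime and \(q\geq r_n\), the last block \((p,kp+q)\) splits as the torus braid \((\sigma_1\cdots\sigma_{p-1})^{q}\) followed by \(k\) full twists \(\Delta_p^{2k}\) on all \(p\) strands. So \(K_k\) is the closure of \(\beta\,\Delta_p^{2k}\), where
\[
\beta=(\sigma_1\cdots\sigma_{r_1-1})^{s_1}\cdots(\sigma_1\cdots\sigma_{r_n-1})^{s_n}(\sigma_1\cdots\sigma_{p-1})^{q}.
\]
The closure of \(\beta\) is the T-knot \(T((r_1,s_1),\ldots,(r_n,s_n),(p,q))\). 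To prove \(K_k\) hyperbolic, I will show it is neither a torus knot nor a satellite knot.

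\emph{Torus case.} Because \(\beta\Delta_p^{2k}\) contains a full twist on \(p\) strands, Franks--Williams gives \(\operatorname{br}(K_k)=p\). Suppose \(K_k\) were a torus knot \(T(a,b)\) with \(a<b\). Then \(a=p\), and the genus would have to agree with \(\tfrac12(p-1)(b-1)\). I would compute \(2g(K_k)\) from the positive braid as (crossings) \(-p+1\). Comparing these two expressions, the contributions of the blocks \((r_i,s_i)\) with \(r_i\leq q<p\) cannot be absorbed into a \((p,b)\) torus knot. The condition \(q\geq r_n\) is what is used here: by Birman--Kofman/de Paiva, such blocks can be rewritten as a torus knot only when they reassemble into a full \(p\)-strand block, and that is impossible under this condition. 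This case is the computational and more routine part, and I would follow the argument of \cite[Theorem~7.10]{de2024lorenz} for it.

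\emph{Satellite case.} This is the main obstacle. For \(k\geq2\) it is handled in \cite{de2024lorenz}. There, satellite T-knots with full twists are shown to be generalised cables of torus knots, and the inequality \(p>q\geq r_n\) rules out a cable pattern: a companion torus knot would need braid index dividing \(p\) and would have to carry all the lower blocks, contradicting the coprimality of \(p\) and \(q\). To reach every \(k\geq1\), I would invoke \cite[Corollary~1.3]{twofulltwists}. As I understand it, that result says the following: if a braid closure \(\widehat{\beta\Delta^{2k}}\) with \(k\geq1\) is satellite, then the companion torus is visible as a braided satellite structure already present in \(\widehat{\beta\Delta^{2}}\). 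This reduces a satellite structure on \(K_k\) to one on a knot covered by the \(k\geq2\) analysis, or to a satellite structure on the braid \(\beta\) itself, which the same generalised-cabling classification excludes.

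Combining the two cases, \(K_k\) is neither torus nor satellite, and hence hyperbolic for all \(k\geq1\). I expect the satellite case to be the hardest step, specifically verifying that the hypotheses of \cite[Corollary~1.3]{twofulltwists} apply to \(\beta\) with only a single full twist.
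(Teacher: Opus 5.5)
Your proposal ultimately rests on the same two citations as the paper, in the same roles. The paper gives no independent argument for this lemma: it quotes \cite[Theorem~7.10]{de2024lorenz} and uses \cite[Corollary~1.3]{twofulltwists} to extend it to $k\geq1$, and both of your cases defer to these results. The reasoning you add around the citations, however, does not stand on its own, and the torus-case step fails.

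\textbf{Torus case.} Comparing braid index and genus cannot exclude a torus knot. Take $n=1$, $(r_1,s_1)=(2,4)$, $p=5$, $q=2$, which satisfy all the hypotheses.
\begin{itemize}
\item $K_k=T((2,4),(5,5k+2))$ is a knot, the closure of a positive $5$-braid with a full twist and $20k+12$ crossings. So $\operatorname{br}(K_k)=5$ and $2g(K_k)=20k+8$.
\item The torus knot $T(5,5k+3)$ has the same braid index and $2g=(5-1)(5k+2)=20k+8$.
\end{itemize}
So your comparison does not prevent the lower block from being ``absorbed''. The principle you attribute to Birman--Kofman/de~Paiva is not a result you can quote.

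What does work is finer braid-theoretic information, for instance Los's uniqueness of minimal braid representatives of torus knots \cite{Los}, as in the paper's proof of Theorem~\ref{hyperbolic}. Write $\delta=\sigma_1\sigma_2\sigma_3\sigma_4$. In this example, Los's theorem would force $\sigma_1^4\delta^2$ to be conjugate to $\delta^3$. That is false: $(\sigma_1^4\delta^2)^5$ has pairwise linking number $4$ on cyclically adjacent strands and $2$ on the others, so it is not $\delta^{15}=\Delta^6$.

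\textbf{Satellite case.} Your passage from $k=1$ to the case $k\geq2$ is asserted rather than derived; for $k=1$ the knot is $\widehat{\beta\Delta^2}$ itself. It also rests on your guessed paraphrase of \cite[Corollary~1.3]{twofulltwists}. The reduction becomes an argument once you know the following: an essential torus for $\widehat{\beta\Delta^2}$ can be isotoped off the braid axis, with companion a positive closed braid on $m\geq2$ strands (compare \cite[Theorem~5.3 and Lemma~3.1]{Ito2025} and \cite[Theorem~4.1]{Satellite}). Adding further full twists then keeps the torus essential, which contradicts the case $k\geq2$. That braidedness statement is the real input and must be cited, not inferred.

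A minor point: splitting $(\sigma_1\cdots\sigma_{p-1})^{kp+q}$ into $(\sigma_1\cdots\sigma_{p-1})^{q}$ times $k$ full twists is pure algebra. It uses neither $\gcd(p,q)=1$ nor $q\geq r_n$.
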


\begin{lemma}\label{lem:braid-index-positive-part}
Let \(p,q\) be positive integers, and suppose that
\[
p>r_n>\cdots>r_1>1.
\]
Then, for every \(k\geq1\), the generalised \(T\)-knot
\[
L_k=
T((r_1,s_1),\ldots,(r_n,s_n),(p,kp+q))
\]
has braid index equal to \(p\).
\end{lemma}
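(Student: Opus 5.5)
The braid $\beta_k=(\sigma_1\cdots\sigma_{r_1-1})^{s_1}\cdots(\sigma_1\cdots\sigma_{r_n-1})^{s_n}(\sigma_1\cdots\sigma_{p-1})^{kp+q}$ on $p$ strands is positive. Its closure is $L_k$, so $\operatorname{br}(L_k)\le p$, and it remains to prove the matching lower bound. The plan is to apply the Franks--Williams criterion used in the proof of Proposition~\ref{Prop:TripNumberDependsOnRepresentative}: by \cite[Corollary~2.4]{Franks-Williams:BraidsAndJonesPolynomial}, a positive braid on $p$ strands that contains a full twist $\Delta_{p,p}^2$ has closure of braid index exactly $p$. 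So the whole argument reduces to exhibiting a full twist inside $\beta_k$.

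To do this, set $A_p=\sigma_1\cdots\sigma_{p-1}$. We have $A_p^{\,p}=\Delta_{p,p}^2$, and this full twist is central in $B_p$. Since $k\ge1$ and $q\ge1$, we have $kp+q\ge p$, so
\[
A_p^{\,kp+q}=\Delta_{p,p}^{2k}A_p^{\,q}.
\]
Hence
\[
\beta_k=\Delta_{p,p}^{2k}\,(\sigma_1\cdots\sigma_{r_1-1})^{s_1}\cdots(\sigma_1\cdots\sigma_{r_n-1})^{s_n}A_p^{\,q}.
\]
This is a positive word containing at least one full twist on all $p$ strands.

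Applying the Franks--Williams corollary then gives $\operatorname{br}(L_k)=p$. An alternative route is through the Morton--Franks--Williams inequality: positivity gives an upper bound on the $v$-span, and the full twist makes the lower bound $\operatorname{br}\ge p$ sharp. Equivalently, one could note that $L_k$ is a Lorenz link of trip number $p$. Its code word has every trip passing through the full-twist region, so the result also follows from the trip-number identity of \cite{Franks-Williams:BraidsAndJonesPolynomial, Birman-Williams:KnottedPeriodicOrbitsInDynamicalSystemsI} together with the Birman--Kofman formula \cite[Corollary~8]{Birman-Kofman:NewTwistOnLorenzLinks}. That formula yields $\operatorname{br}=\min(r_{\max},\text{(sum of }s_i\text{ with the largest }r_i))$, which here is $\min(p,kp+q)=p$.

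The only point that needs care is checking that the full twist can be extracted without destroying positivity of the remaining word, and that the hypotheses of Franks--Williams are met. These hypotheses are that the braid is on exactly $p$ strands, is positive, and has a positive full twist factor. Both points are immediate from centrality of $\Delta^2_{p,p}$ and $kp+q\ge p$. The hypothesis $p>r_n$ ensures that $p$ is genuinely the largest strand count, so no smaller-strand destabilisation can occur before the final block. I do not expect a serious obstacle; the main thing is citing the Franks--Williams corollary in the right form.
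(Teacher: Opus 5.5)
Your main argument is correct and is essentially the paper's proof: you take $\operatorname{br}(L_k)\le p$ from the positive $p$-strand braid, use $k\ge1$ to split at least one full twist off $(\sigma_1\cdots\sigma_{p-1})^{kp+q}$, and then apply \cite[Corollary~2.4]{Franks-Williams:BraidsAndJonesPolynomial}. One caution about your optional aside: the formula you attribute to Birman--Kofman, $\min(r_k,s_k)$ (or $\min(r_k,\sum s_i)$), is not the braid index of a general T-link. For example, $T((2,3),(3,1))$ is the closure of $\sigma_1^4\sigma_2$, i.e.\ the $(2,4)$ torus link, so its braid index is $2$. It only gives the right value here because the last block has $s=kp+q\ge p=r$, and the main argument does not need it.
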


\begin{proof}
The knot \(L_k\) is the closure of the positive braid
\[
B_k=
(\sigma_1\cdots\sigma_{r_1-1})^{s_1}\cdots
(\sigma_1\cdots\sigma_{r_n-1})^{s_n}
(\sigma_1\cdots\sigma_{p-1})^{kp+q}
\]
on \(p\) strands. Hence
\[
\operatorname{br}(L_k)\leq p.
\]

Since \(k\geq1\), the final factor contains at least one full twist on all
\(p\) strands:
\[
(\sigma_1\cdots\sigma_{p-1})^{kp+q}
=
\bigl((\sigma_1\cdots\sigma_{p-1})^p\bigr)^k
(\sigma_1\cdots\sigma_{p-1})^q.
\]
Thus \(B_k\) is a positive \(p\)-braid containing a full twist on all
\(p\) strands. By the theorem of Franks and Williams
\cite[Corollary~2.4]{Franks-Williams:BraidsAndJonesPolynomial}, this braid
realises the braid index of its closure. Therefore
\[
\operatorname{br}(L_k)=p. \qedhere
\]
\end{proof}

\begin{definition}
By a \emph{generalized $q$--cabling} of a link $L$ we mean a link $L'$
contained in the interior of a tubular neighbourhood $L\times D^2$ of $L$ such that:
\begin{enumerate}
\item each fiber $D^2$ intersects $L'$ transversely in $q$ points; and
\item all strands of $L'$ are oriented in the same direction as $L$ itself.
\end{enumerate}
\end{definition}

The following theorem, due to Williams~\cite{Williams}, states that braid
index is multiplicative under generalised cabling.

\begin{theorem}\label{Williams}
Let \(L\) be a link whose components are all non-trivial knots, and let
\(L'\) be a generalised \(q\)-cabling of \(L\). Then
\[
\beta(L')=q\,\beta(L),
\]
where \(\beta(\cdot)\) denotes braid index.
\end{theorem}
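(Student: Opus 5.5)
The plan is to prove the two inequalities $\beta(L')\leq q\,\beta(L)$ and $\beta(L')\geq q\,\beta(L)$ separately. The upper bound is elementary. The lower bound carries all the difficulty, and for it I would use Birman--Menasco braid foliation techniques applied to the cabling torus (or tori, one for each component of $L$).

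\emph{Upper bound.} Choose a closed braid representative of $L$ with $\beta(L)$ strands around an axis $A$. Let $N=L\times D^2$ be a thin tubular neighbourhood. Each meridian disc of the open book with binding $A$ meets $N$ in $\beta(L)$ small discs, each transverse to $L$. By condition (1) of the definition, $L'$ meets every fibre $D^2$ in $q$ points. By condition (2), all strands of $L'$ run in the same direction as $L$. After an isotopy inside $N$ that is supported fibrewise, $L'$ is therefore transverse to every page of the open book, with the same positive orientation. Hence $L'$ is a closed braid about $A$ with $q\,\beta(L)$ strands, which gives $\beta(L')\leq q\,\beta(L)$.

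\emph{Lower bound.} Start from a closed braid representative of $L'$ with $m=\beta(L')$ strands about an axis $A$. Let $T=\partial N$ be the union of companion tori. Because every component of $L$ is a non-trivial knot, each component of $T$ is incompressible in $S^3\setminus (L'\cup A)$, and it is not boundary parallel to $L'$, since $L'$ is not isotopic to the core. The next step is to put $T$ in the standard position of Birman--Menasco (\emph{Studying links via closed braids}), which I would do as follows.
\begin{enumerate}
\item Make $T$ meet $A$ transversely, and give $T$ the braid foliation induced by the pages.
\item Remove inessential singular tiles by isotopies of $T$ in the complement of $L'$, and by exchange moves on $L'$. These moves keep $L'$ a closed braid with $m$ strands.
\item Conclude that, after these simplifications, each component of $T$ is a braided torus: it meets each page in essential circles bounding discs that meet the core transversely and positively.
\end{enumerate}

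Once $T$ is braided, the core of each solid torus is a closed braid about $A$. Suppose it meets each page in $b$ discs, so that $b\geq\beta(L)$. Each of these discs is a fibre of $N$ up to isotopy, and so it contains exactly $q$ points of $L'$. The reason is that the algebraic intersection count of $L'$ with a fibre equals $q$, and positivity of the braid $L'$ forces every intersection to have the same sign. It follows that $m\geq q\,b\geq q\,\beta(L)$.

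\emph{Main obstacle.} I expect the hardest step to be step (3): showing that the essential torus can be made braided without increasing the strand number of $L'$. The braid foliation may contain saddle--saddle tiles that cannot be removed by exchange moves alone. I would first try the Birman--Menasco classification of tilings of essential tori in braid complements, reducing to the cases where the torus is braided or is pierced by $A$ in a way that yields an exchange or destabilisation. A destabilisation would contradict the minimality of $m$. For multi-component $L$, I would run the argument on each component torus simultaneously, arguing that the simplifications on disjoint tori do not interfere.
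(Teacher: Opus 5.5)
The paper gives no proof of this statement. It imports it as a black box from Williams's work on generalised cables, so your proposal has to stand on its own. Your upper bound is fine. The only thing to add is that re-identifying the given product structure on $N$ with the page-disc fibration may change the pattern braid by full twists in $B_q$, and this does not affect transversality. Your final count is also fine once $T$ is braided. The page-discs are then meridian discs of the solid torus $V$, $L'$ crosses each of them positively and hence in exactly $q$ points, and the cores form a closed braid representative of $L$ whose total number of strands is at least $\beta(L)$.

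The genuine gap is step (3), which is the whole content of the theorem and which you only announce. Moreover, as formulated it cannot be right. Steps (1)--(3) use only that $T$ is essential and that $m$ is minimal; they never use that $L'$ is a generalised cable. By your own definition, if $T=\partial V$ is braided, then the page-discs it bounds meet the core, so they are meridian discs fibring $V$. Hence $L'\cap V$ is a closed braid in $V$ and has positive winding number in $V$. Consequently, for a winding-number-zero satellite, such as a Whitehead double of a non-trivial knot, the companion torus can never be put in braided position, for any braid representative. This is why any general positioning theorem for essential tori in closed-braid complements, such as Birman--Menasco's, must allow non-braided positions even for minimal braids.

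So your dichotomy ``braided, or an exchange move or destabilisation is available'' fails in general. What is missing is an argument that uses the cabling hypotheses to rule out the non-braided positions without increasing $m$. Those hypotheses are that the pattern is a $q$-braid in $V$ with all strands oriented like the core, and that each companion is non-trivial. That step is exactly what Williams's theorem supplies on top of the braid-foliation machinery. Without it, the inequality $\beta(L')\geq q\,\beta(L)$ is not proved.

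A secondary point is also unaddressed. Exchange moves made to simplify one component torus move $L'$ and may spoil the position of another component torus. So the claim that the simplifications on different tori do not interfere needs its own argument.
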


The following lemma was proved in \cite[Lemma~2.3]{twofulltwists}.

\begin{lemma}\label{Lemma10}
Let \(L'\) be a generalized \(q\)-cabling of the unknot \(L\), where \(L\) is represented by a positive braid with \(n\) strands, and assume \(n > 1\). Suppose further that \(\partial N(L)\) is disjoint from the braid axis of \(L\). Then \(L'\) can be represented as a positive braid with \(s - 1\) full twists, where \(n = s q\). In particular, \(L'\) has braid index equal to \(q\).
\end{lemma}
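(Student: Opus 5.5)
The plan is to reduce everything to the standard picture of the unknot as a torus-knot-type curve in the solid torus complementary to the braid axis, and then to read off $L'$ as a braid on $q$ strands. Throughout, I interpret $n$ as the number of strands of the given positive braid presentation of $L'$ about the axis $A$; then $L$ winds $s=n/q$ times around $A$. This is the reading consistent with $n=sq$.

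\emph{Step 1: normalise $L$.} Let $V=S^3\setminus N(A)$, a solid torus. The hypothesis that $\partial N(L)$ is disjoint from $A$ means $N(L)\subset V$, so $L$ is a closed $s$-braid in $V$. The closure $L$ is an unknot, and I will use that it is also a positive braid closure. By Bennequin's equality, the genus of a positive braid closure is $(c-s+1)/2$, where $c$ is the word length. So $c=s-1$, and each generator $\sigma_i$ must occur exactly once for the closure to be connected. Any such word is conjugate to $\sigma_1\cdots\sigma_{s-1}$. Hence, after an isotopy inside $V$, $L$ is the $(s,1)$ curve on a concentric torus in $V$, and $N(L)$ is a standard neighbourhood of it.

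\emph{Step 2: express $L'$ inside $N(L)$ as a $q$-braid pattern.} Since $L'$ meets every meridian disc of $N(L)$ transversely in $q$ coherently oriented points, $L'$ is the closure of a $q$-braid $\gamma$ in $N(L)\cong S^1\times D^2$. To make $\gamma$ well defined, I measure it against the framing of $N(L)$ induced by the torus containing the $(s,1)$ curve. Re-embedding the pattern along the $(s,1)$ curve produces the satellite in two stages: first the $q$ parallel torus-framed copies of the $(s,1)$ curve, and then $\gamma$ inserted. The $q$ parallel copies form the torus link $T(sq,q)=T(q,sq)$. The key claim is that, using the symmetry of $S^3$ that exchanges the two core circles of the Heegaard tori (which turns the $(s,1)$ unknot into a core), $L'$ becomes the closure of the $q$-braid $\gamma\,\Delta_q^{2s}$. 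Correcting from the torus framing to the framing compatible with the original $n$-strand braid $\gamma$ changes the number of twists by one. This yields $L'=\widehat{\gamma'\,\Delta_q^{2(s-1)}}$ with $\gamma'$ the pattern braid read in the Seifert framing.

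\emph{Step 3: positivity and braid index.} I next need $\gamma'$ to be a positive $q$-braid. For this I will compare the given positive $n$-braid word of $L'$ with the $sq$-strand braid obtained from Step 2 by cabling $\sigma_1\cdots\sigma_{s-1}$. In that comparison, every crossing of the original word is either one of the cabling crossings, which are positive, or a crossing of the pattern inside a single cabled strand. Hence the pattern crossings are positive, up to conjugation and the full twists already accounted for. Once $L'$ is the closure of a positive $q$-braid containing a full twist (this needs $s-1\ge 1$; I would treat $s=1$ separately, where $L'$ is directly a $q$-braid), Franks--Williams \cite[Corollary~2.4]{Franks-Williams:BraidsAndJonesPolynomial} gives $\operatorname{br}(L')=q$. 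If $s=1$, the upper bound $q$ is immediate, and the lower bound again comes from Franks--Williams applied to the positive word.

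The main obstacle is the framing bookkeeping in Step 2: determining exactly whether the swap produces $s$ or $s-1$ full twists, and checking that the residual pattern braid remains positive rather than acquiring negative twists. I would first test the count on the simplest case $\gamma=\sigma_1$, $q=2$, $s=2$, comparing crossing counts and writhes of the two braid presentations. I would then fix the general framing shift by a writhe computation: the $n$-braid has writhe $\text{wr}(\gamma)+q^2(s-1)$ from cabling, while the $q$-braid form has writhe $\text{wr}(\gamma')+(s-1)q(q-1)$. Stabilising the latter up to $n$ strands and comparing these with the common self-linking number pins down the shift.
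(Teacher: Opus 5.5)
The paper does not prove this lemma; it quotes it from \cite[Lemma~2.3]{twofulltwists}. Your proposal therefore has to stand on its own, and it has a genuine gap in Step~3.

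After Steps~1--2, all you know is that the given positive $n$-braid $P$ of $L'$ is conjugate in $B_{sq}$ to the cabled braid $C\gamma'$. Equivalently, the two closures are isotopic in the solid torus $V$. Here $C$ is the $q$-strand cabling of $\sigma_1\cdots\sigma_{s-1}$. The letters of $P$ do not come sorted into ``cabling crossings'' and ``pattern crossings''. Conjugation and braid relations redistribute crossings freely, so a positive word for a braid conjugate to $C\gamma'$ need not display the cabled structure at all. The only information that survives conjugation for free is the exponent sum, which gives $e(\gamma')=e(P)-(s-1)q^2$. That says nothing about the signs of the letters of $\gamma'$. So ``hence the pattern crossings are positive, up to conjugation'' asserts exactly what has to be proved.

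There are two ways to close this. One is to read the hypothesis as saying that the positive braid of $L'$ is itself a cabled positive braid, i.e.\ a positive companion word with a positive pattern inserted on one bundle; then Step~3 is immediate. The other is to give a real argument that positivity passes from a conjugate of this reducible braid to its interior braid. That is a nontrivial statement about reducible braids, not a crossing count; Garside-theoretic results on representatives with round reduction curves are one possible route.

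Your treatment of $s=1$ is also wrong. Franks--Williams \cite[Corollary~2.4]{Franks-Williams:BraidsAndJonesPolynomial} needs a full twist, and for $s=1$ there is none. In fact, under your reading of $n$ the conclusion is false. Take $L$ to be the core of $V$ (a positive $1$-braid), $q=n=2$, and $L'=\widehat{\sigma_1}\subset N(L)$. This is a positive generalised $2$-cabling with $\partial N(L)$ disjoint from the axis, yet $L'$ is the unknot, with braid index $1$. So the hypothesis $n>1$ must be understood as ``the companion braid has $s>1$ strands'', which is exactly what supplies the $s-1\geq 1$ full twists.

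The framing bookkeeping you flag as the main obstacle is routine. The blackboard framing of the closed braid diagram of $\sigma_1\cdots\sigma_{s-1}$ equals its writhe $s-1$, while the torus framing is $s$. Hence cabling that diagram with pattern $\gamma'$ gives $L'=\widehat{\gamma'\Delta_q^{2(s-1)}}$, with $\gamma'$ read in the blackboard framing, not the Seifert framing as you wrote. Drop the proposed self-linking comparison. Self-linking number is invariant only under transverse isotopy, and your two braid presentations are taken about different axes.
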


\begin{theorem}\label{hyperbolic}
Suppose that
\[
p'>p>q\geq r_n>\cdots>r_1>1,
\]
with
\[
\gcd(p,q)=1
\qquad\text{and}\qquad
\gcd(p,p')=1.
\]
Then, for every \(k,k'\geq1\), the generalised \(T\)-knot
\[
T((r_1,s_1),\ldots,(r_n,s_n),
(p,kp+q-1),(p',k'p'+1))
\]
is hyperbolic.
\end{theorem}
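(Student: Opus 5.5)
Let $K$ denote the knot
\[
K = T((r_1,s_1),\ldots,(r_n,s_n),(p,kp+q-1),(p',k'p'+1)).
\]
The plan is to view $K$ as obtained from a hyperbolic knot of the type in Lemma~\ref{lem:positive-part-hyperbolic} by adding full twists on all $p'$ strands. Then Thurston's trichotomy for knots in $S^3$ is applied: we rule out the torus and satellite cases, which leaves hyperbolicity.

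First we rewrite the braid. Set $A_m=\sigma_1\cdots\sigma_{m-1}$. Then
\[
A_{p'}^{k'p'+1}=\Delta_{p'}^{2k'}A_{p'},
\]
and $A_p^{kp+q-1}A_{p'}$ closes, after Markov destabilising the strands $p+1,\ldots,p'$, to the same knot as $A_p^{kp+q}$. So $K$ is the closure of the braid $\beta\,\Delta_{p'}^{2k'}$, where the closure $\hat\beta$ is the knot
\[
J=T((r_1,s_1),\ldots,(r_n,s_n),(p,kp+q)).
\]
The braid $\beta$ is a $p'$-strand braid whose closure has $p'-p$ trivially stabilised strands. By Lemma~\ref{lem:positive-part-hyperbolic}, $J$ is hyperbolic, since $\gcd(p,q)=1$ and $k\ge1$. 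Equivalently, $K$ is obtained from $S^3\setminus(J\cup C)$ by $-1/k'$ Dehn filling on the braid axis $C$, where $C$ encircles the $p'$ strands of $\beta$. Since $\gcd(p,p')=1$, the knot $K$ has one component.

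\textbf{Torus case.} The braid $\beta\Delta_{p'}^{2k'}$ is a positive $p'$-braid containing a full twist, so by Franks--Williams \cite[Corollary~2.4]{Franks-Williams:BraidsAndJonesPolynomial} we have $\operatorname{br}(K)=p'$. A torus knot $T(a,b)$ with braid index $p'$ that contains this many twists would have to be $T(p',b)$. We compare genus, which for a positive braid is $(c-p'+1)/2$ with $c$ the number of crossings. Setting $c=\sum s_i(r_i-1)+(kp+q-1)(p-1)+(k'p'+1)(p'-1)$, we get $b$ determined as $b=k'p'+1+\delta$, where $\delta(p'-1)=\sum s_i(r_i-1)+(kp+q-1)(p-1)$. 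Then we argue with a second invariant that $K$ differs from $T(p',b)$. The first choice is the fact that the closure of $\beta$ with the full twists removed, namely $J$, is hyperbolic rather than trivial. Concretely, Lemma~\ref{Lemma10} together with Theorem~\ref{Williams} applied to the JSJ structure forces $\hat\beta$ to be a torus knot or unknot, which contradicts the hyperbolicity of $J$. An alternative is to invoke the result of \cite{twofulltwists} that positive braids with full twists close to torus knots only in special forms.

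\textbf{Satellite case (main obstacle).} Suppose $K$ has an essential torus $T$. Since $K$ is the closure of a positive braid with at least one full twist, the results of \cite{twofulltwists}, as used in Lemma~\ref{lem:positive-part-hyperbolic}, let us isotope $T$ to be braided with respect to $C$, so $K$ becomes a generalised $q'$-cabling of a companion $L_0$ that is itself braided about $C$. If the companion is nontrivial, Theorem~\ref{Williams} gives $p'=\operatorname{br}(K)=q'\operatorname{br}(L_0)$. If the companion is an unknot, Lemma~\ref{Lemma10} applies instead. Both cases force $q'$ to divide $p'$ nontrivially, or force the winding data to be compatible with the number of strands $p'$. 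Removing the $k'$ full twists, which commute with the cabling structure, then shows that $J\cup C$ would contain an essential torus, or that $J$ would itself be a cable or satellite. This contradicts Lemma~\ref{lem:positive-part-hyperbolic}, given that $\gcd(p,p')=1$ prevents the $p$-strand hyperbolic part from being a sub-cable of a $q'$-cabling with $q'\mid p'$. The delicate point is making the essential torus braided and controlling its winding number relative to $C$. I expect this to require the full strength of the \cite{twofulltwists} machinery, and the coprimality $\gcd(p,p')=1$ is exactly what kills the remaining case. With the torus and satellite cases ruled out, $K$ is hyperbolic.
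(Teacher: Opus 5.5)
Your overall strategy is the paper's: remove the $k'$ full twists by $-1/k'$ surgery on the braid axis $C$ to reach the hyperbolic knot $J$, then rule out the torus and satellite cases. But neither exclusion is actually carried out, and each is missing a specific ingredient.

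\textbf{Torus case.} The genus computation only tells you which $T(p',b)$ the knot would have to be; in general it gives no contradiction. Lemma~\ref{Lemma10} and Theorem~\ref{Williams} do not apply here. A torus knot complement contains no essential torus, and you give no reason why the torus carrying $K$ should avoid the braid axis.

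The missing input is uniqueness of minimal braid representatives of torus knots, i.e.\ Los's theorem \cite{Los}: two minimal braid representatives of the same torus knot with the same axis are isotopic in the complement of the axis. You already have $\operatorname{br}(K)=p'$, so $\beta\Delta_{p'}^{2k'}$ is minimal. It is therefore isotopic, in the complement of $C$, to $(\sigma_1\cdots\sigma_{p'-1})^{b}$. The surgery on $C$ that removes the $k'$ full twists turns this into $(\sigma_1\cdots\sigma_{p'-1})^{b-k'p'}$. So $J=\hat\beta$ would be a torus knot or the unknot, contradicting Lemma~\ref{lem:positive-part-hyperbolic}. This is exactly the paper's argument.

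\textbf{Satellite case.} The paper puts the essential torus off the axis using \cite[Theorem~5.3 and Lemma~3.1]{Ito2025}, and gets the companion as a positive braid with $k'$ full twists about the same axis from \cite[Theorem~4.1]{Satellite}. After removing the twists, $J$ is a generalised $q'$-cabling of a companion $L_0'$, with $q'\mid p'$ and $q'\ge 2$.

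If $L_0'$ is knotted, your conclusion that $J$ is a satellite is valid, and slightly more direct than the paper's use of Theorem~\ref{Williams}. The gap is the case where $L_0'$ is unknotted. An essential torus in $S^3\setminus(J\cup C)$ contradicts nothing: Lemma~\ref{lem:positive-part-hyperbolic} is about $J$ alone, and a hyperbolic knot can sit inside an unknotted solid torus. Your remark that $\gcd(p,p')=1$ ``prevents a sub-cable'' is not an argument.

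What is needed:
\begin{itemize}
\item Apply Lemma~\ref{Lemma10} to $J=\widehat{B'}$, not to $K$. Its hypotheses hold because $L_0'$ is still a positive braid on $p'/q'\ge 2$ strands with $\partial N(L_0')$ disjoint from the axis. This gives $\operatorname{br}(J)=q'$.
\item Compare with $\operatorname{br}(J)=p$ from Lemma~\ref{lem:braid-index-positive-part}. Then $p=q'$ divides $p'$, impossible since $\gcd(p,p')=1$ and $p>1$.
\end{itemize}
Applying Lemma~\ref{Lemma10} to $K$ itself, as you propose, does nothing. The companion of $K$ is a positive braid on at least two strands with $k'\ge1$ full twists, so it is never the unknot; the unknotted case appears only after twist removal.

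\textbf{Minor point.} $K$ is not a knot because $\gcd(p,p')=1$. Full twists are pure braids and destabilisation preserves components, so $K$ and $J$ always have the same number of components.
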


\begin{proof}
Let
\[
A=
(\sigma_1\cdots\sigma_{r_1-1})^{s_1}\cdots
(\sigma_1\cdots\sigma_{r_n-1})^{s_n}.
\]
Then the knot in the statement is the closure of the positive braid
\[
B=
A(\sigma_1\cdots\sigma_{p-1})^{kp+q-1}
(\sigma_1\cdots\sigma_{p'-1})^{k'p'+1}
\]
on \(p'\) strands.

We first observe what happens after removing the \(k'\) full twists on the
\(p'\) strands. Let
\[
B'=
A(\sigma_1\cdots\sigma_{p-1})^{kp+q-1}
(\sigma_1\cdots\sigma_{p'-1}).
\]
The closure of \(B'\) is isotopic to
\[
T((r_1,s_1),\ldots,(r_n,s_n),(p,kp+q)).
\]
Indeed, the final factor
\[
\sigma_1\cdots\sigma_{p'-1}
\]
allows us to destabilise successively the rightmost \(p'-p\) strands.
After these destabilisations, the remaining braid is
\[
A(\sigma_1\cdots\sigma_{p-1})^{kp+q}.
\]
Therefore, by Lemma~\ref{lem:positive-part-hyperbolic}, the closure of
\(B'\) is hyperbolic. Moreover, by
Lemma~\ref{lem:braid-index-positive-part}, its braid index is equal to
\(p\).

We now prove that the closure \(K=\widehat B\) is hyperbolic. Since \(B\)
is a positive braid containing a full twist on all \(p'\) strands, the
theorem of Franks and Williams
\cite[Corollary~2.4]{Franks-Williams:BraidsAndJonesPolynomial} gives
\[
\operatorname{br}(K)=p'.
\]
In particular, \(K\) is non-trivial.

Suppose first that \(K\) is a torus knot. Since \(K\) has braid index
\(p'\), it is a \((p',m)\)-torus knot for some \(m\). The braid \(B\)
contains \(k'\) full twists on the \(p'\) strands. By Los' theorem
\cite[Corollary~1.2]{Los}, any two minimal braid representatives of the
same torus knot with the same braid axis are isotopic in the complement
of the axis. Therefore, after performing surgery on the braid axis which
removes these \(k'\) full twists, the resulting knot must again be a torus
knot.

However, the resulting knot is the closure of \(B'\), which is isotopic to
\[
T((r_1,s_1),\ldots,(r_n,s_n),(p,kp+q)).
\]
This knot is hyperbolic by
Lemma~\ref{lem:positive-part-hyperbolic}, and hence cannot be a torus knot.
This is a contradiction. Thus \(K\) is not a torus knot.

It remains to rule out the satellite case. Suppose that \(K\) is a
satellite knot, and let \(T\) be an essential torus in
\(S^3\setminus K\). Since \(B\) is positive and contains \(k'\) positive
full twists on all \(p'\) strands, the torus \(T\) can be isotoped to be
disjoint from the braid axis \cite[Theorem~5.3 and Lemma~3.1]{Ito2025}. Moreover, the core of the solid torus
bounded by \(T\) is represented by a positive braid with \(k'\) full
twists with respect to the same braid axis as \(B\) \cite[Theorem~4.1]{Satellite}.

Thus \(K\) is a generalised \(b\)-cabling of the core of the solid torus
bounded by \(T\), for some \(b>1\). In particular,
\[
b\mid p'.
\]

Now perform surgery on the braid axis which removes the \(k'\) full twists
on the \(p'\) strands. The knot \(K\) becomes the closure of \(B'\), and
the torus \(T\) becomes a torus \(T'\) disjoint from the braid axis of
\(B'\).

If \(T'\) is knotted, then Williams' theorem gives
\[
\operatorname{br}(\widehat{B'})
=
b\,\operatorname{br}(C'),
\]
where \(C'\) denotes the core of the solid torus bounded by \(T'\).
If \(T'\) is unknotted, then
Lemma~\ref{Lemma10} gives
\[
\operatorname{br}(\widehat{B'})=b.
\]
In either case,
\[
b\mid\operatorname{br}(\widehat{B'}).
\]

By Lemma~\ref{lem:braid-index-positive-part},
\[
\operatorname{br}(\widehat{B'})=p.
\]
Therefore
\[
b\mid p.
\]
Since also \(b\mid p'\) and \(\gcd(p,p')=1\), we obtain
\[
b=1,
\]
contradicting \(b>1\). Hence \(K\) is not a satellite knot.

We have shown that \(K\) is non-trivial, not a torus knot, and not a
satellite knot. By Thurston's hyperbolisation theorem for knots in
\(S^3\), the knot \(K\) is hyperbolic.
\end{proof}

\begin{lemma}\label{lem:associated-lorenz-fixed}
Let \(q,s_1,\ldots,s_n\) be positive integers, and suppose that
\[
p'>p>r_n>\cdots>r_1>1.
\]
For \(k,k'\geq1\), let
\[
K_{k,k'}
=
T((r_1,s_1),\ldots,(r_n,s_n),
(p,kp+q-1),(p',k'p'+1)).
\]
Then the Lorenz link obtained by removing the \(k'\) positive full twists
from the final block is independent of \(k'\). More precisely, this
associated Lorenz link is
\[
L_k
=
T((r_1,s_1),\ldots,(r_n,s_n),(p,kp+q)).
\]
In particular,
\[
\operatorname{br}(L_k)=p,
\]
and hence the braid index of the associated Lorenz link is independent of
\(k'\).
\end{lemma}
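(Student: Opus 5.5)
We view \(K_{k,k'}\) as a generalised T-link whose last block is \((r_{k+1},s_{k+1})=(p',k'p'+1)\). The admissible integer is \(n=k'\), since \(s_{k+1}=k'p'+1\geq k'p'\). The definition of the associated Lorenz link (Definition~\ref{Def:LorenzBraidIndex}) then gives
\[
(K_{k,k'})^+_{k'}=T((r_1,s_1),\ldots,(r_n,s_n),(p,kp+q-1),(p',1)).
\]
The final block \((p',1)\) is not omitted, because \(k'p'+1-k'p'=1\neq0\). This expression visibly does not depend on \(k'\). It remains to identify it with \(L_k\) and to compute its braid index.

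For the identification, we reuse the destabilisation argument from the proof of Theorem~\ref{hyperbolic}. The associated Lorenz link is the closure of the positive braid
\[
A(\sigma_1\cdots\sigma_{p-1})^{kp+q-1}(\sigma_1\cdots\sigma_{p'-1}),
\qquad
A=(\sigma_1\cdots\sigma_{r_1-1})^{s_1}\cdots(\sigma_1\cdots\sigma_{r_n-1})^{s_n},
\]
on \(p'\) strands. The generator \(\sigma_{p'-1}\) appears exactly once, so a Markov destabilisation removes the last strand. Repeating this for \(\sigma_{p'-2},\ldots,\sigma_p\) strips off the rightmost \(p'-p\) strands. We are left with
\[
A(\sigma_1\cdots\sigma_{p-1})^{kp+q-1}(\sigma_1\cdots\sigma_{p-1})=A(\sigma_1\cdots\sigma_{p-1})^{kp+q},
\]
whose closure is \(L_k=T((r_1,s_1),\ldots,(r_n,s_n),(p,kp+q))\).

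The braid index claim follows from Lemma~\ref{lem:braid-index-positive-part}. Its hypotheses are \(p>r_n>\cdots>r_1>1\), \(q>0\) and \(k\geq1\), and all of these hold here. So \(\operatorname{br}(L_k)=p\), which is independent of \(k'\).

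The main point needing care is the destabilisation step. One must check that after each destabilisation the remaining product is still a single occurrence of \(\sigma_1\cdots\sigma_{j-1}\) appended at the end, so that the process can be iterated. The factors before it involve only \(\sigma_1,\ldots,\sigma_{p-1}\), since \(p>r_n\), so the top generator occurs exactly once at every stage. This step is routine.
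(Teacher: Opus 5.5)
Your proposal is correct and follows essentially the same route as the paper. You reach the braid \(A(\sigma_1\cdots\sigma_{p-1})^{kp+q-1}(\sigma_1\cdots\sigma_{p'-1})\) by taking \(n=k'\) in Definition~\ref{Def:LorenzBraidIndex} (the paper instead factors out the \(k'\) full twists directly), then perform the same \(p'-p\) Markov destabilisations and invoke Lemma~\ref{lem:braid-index-positive-part} for \(\operatorname{br}(L_k)=p\), and you also check explicitly that the top generator occurs exactly once at each stage, a detail the paper leaves implicit.
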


\begin{proof}
Set
\[
A=
(\sigma_1\cdots\sigma_{r_1-1})^{s_1}\cdots
(\sigma_1\cdots\sigma_{r_n-1})^{s_n}.
\]
Then \(K_{k,k'}\) is the closure of the positive braid
\[
B_{k,k'}
=
A
(\sigma_1\cdots\sigma_{p-1})^{kp+q-1}
(\sigma_1\cdots\sigma_{p'-1})^{k'p'+1}
\]
on \(p'\) strands.

We may write the final factor as
\[
(\sigma_1\cdots\sigma_{p'-1})^{k'p'+1}
=
\left(
(\sigma_1\cdots\sigma_{p'-1})^{p'}
\right)^{k'}
(\sigma_1\cdots\sigma_{p'-1}).
\]
The factor
\[
\left(
(\sigma_1\cdots\sigma_{p'-1})^{p'}
\right)^{k'}
\]
consists of \(k'\) positive full twists on all \(p'\) strands. Removing
these full twists therefore gives the positive braid
\[
B_k'
=
A
(\sigma_1\cdots\sigma_{p-1})^{kp+q-1}
(\sigma_1\cdots\sigma_{p'-1}).
\]

The closure of \(B_k'\) admits \(p'-p\) successive positive Markov
destabilisations, removing the rightmost strands \(p',p'-1,\ldots,p+1\).
After these destabilisations, the final factor
\[
\sigma_1\cdots\sigma_{p'-1}
\]
reduces to
\[
\sigma_1\cdots\sigma_{p-1}.
\]
Hence the resulting braid on \(p\) strands is
\[
A
(\sigma_1\cdots\sigma_{p-1})^{kp+q-1}
(\sigma_1\cdots\sigma_{p-1})
=
A
(\sigma_1\cdots\sigma_{p-1})^{kp+q},
\]
whose closure is
\[
T((r_1,s_1),\ldots,(r_n,s_n),(p,kp+q))
=
L_k.
\]

Therefore, the associated Lorenz link depends on \(k\) and on the fixed
initial parameters, but not on \(k'\). Finally, by
Lemma~\ref{lem:braid-index-positive-part},
\[
\operatorname{br}(L_k)=p. \qedhere
\]
\end{proof}

\begin{corollary}\label{cor:uniform-volume-bound}
Using the assumptions of Theorem~\ref{hyperbolic}, for \(k,k'\geq1\), let
\[
K_{k,k'}
=
T((r_1,s_1),\ldots,(r_n,s_n),
(p,kp+q-1),(p',k'p'+1)).
\]
Then
\[
\operatorname{Vol}(K_{k,k'})
\leq
12v_{\textup{tet}}(p^2+3p)
\]
for every \(k,k'\geq1\). In particular, for each fixed \(k\), the volumes
of the knots \(K_{k,k'}\), with \(k'\geq1\), are uniformly bounded.
Indeed, the bound is independent of both \(k\) and \(k'\).
\end{corollary}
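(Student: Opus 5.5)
The plan is to apply Theorem~\ref{Thm:VolumeBoundAssociatedLorenzLink} directly to $K_{k,k'}$, viewed as a generalised T-link, with $n=k'$, and then to identify the associated Lorenz link using Lemma~\ref{lem:associated-lorenz-fixed}.

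First, I check that $K_{k,k'}$ is a generalised T-link in the sense of Definition~\ref{Def:GeneralisedTLink}. The parameters are $r_1<\cdots<r_n<p<p'$ with $d=0$ and final block $(r_{k+1},s_{k+1})=(p',k'p'+1)$. The last exponent satisfies $k'p'+1\geq p'+1\geq 2$, so it lies in $\mathbb Z\setminus\{-1,0,1\}$. The penultimate exponent $kp+q-1$ is positive because $k,q\geq1$ and $p\geq 2$. The strict increase $r_n<p$ follows from the hypothesis $q\geq r_n$ of Theorem~\ref{hyperbolic} only if $q<p$ is used, which is indeed assumed. If $q=r_n$ one only has $r_n\leq q<p$, but that still gives $r_n<p$.

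Second, I choose $n=k'$, which is admissible since $s_{k+1}=k'p'+1\geq k'p'$. The associated Lorenz link from Definition~\ref{Def:LorenzBraidIndex} is then
\[
T((r_1,s_1),\ldots,(r_n,s_n),(p,kp+q-1),(p',1)).
\]
Lemma~\ref{lem:associated-lorenz-fixed} shows, via $p'-p$ Markov destabilisations, that this link is $L_k=T((r_1,s_1),\ldots,(r_n,s_n),(p,kp+q))$ with $\operatorname{br}(L_k)=p$. Its hypotheses ($p'>p>r_n>\cdots>r_1>1$ and positive $q,s_i$) are implied by those of Theorem~\ref{hyperbolic}. Theorem~\ref{Thm:VolumeBoundAssociatedLorenzLink} then gives $\operatorname{Vol}(K_{k,k'})\leq 12v_{\mathrm{tet}}(\beta^2+3\beta)$ with $\beta=p$. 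The bound $12v_{\mathrm{tet}}(p^2+3p)$ visibly depends only on $p$, so it is uniform in both $k$ and $k'$.

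The one point needing care is that Theorem~\ref{Thm:VolumeBoundAssociatedLorenzLink} takes $\beta$ to be the braid index of the link written with the final block $(p',1)$, not of $L_k$. Braid index is an isotopy invariant and the two braids have isotopic closures, so the equality of $\beta$ with $\operatorname{br}(L_k)=p$ holds. Alternatively, one can argue $\operatorname{lbi}(K_{k,k'})\leq p$ and invoke Theorem~\ref{Thm:VolumeLorenzBraidIndex}, using that $p^2+3p$ is increasing in $p$. No genuine obstacle remains beyond this bookkeeping.
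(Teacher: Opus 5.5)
Your proposal is correct and follows the paper's proof. You take $n=k'$ in Theorem~\ref{Thm:VolumeBoundAssociatedLorenzLink}, identify the associated Lorenz link with $L_k$ via Lemma~\ref{lem:associated-lorenz-fixed}, and use $\operatorname{br}(L_k)=p$ from Lemma~\ref{lem:braid-index-positive-part}. Your extra bookkeeping only makes explicit what the paper absorbs into Lemma~\ref{lem:associated-lorenz-fixed}: you check the generalised T-link conditions, and you note that the literal associated link has final block $(p',1)$ and is isotopic to $L_k$, hence has the same braid index.
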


\begin{proof}
By Lemma~\ref{lem:associated-lorenz-fixed}, the Lorenz link associated
with the given generalised T-link representation of \(K_{k,k'}\) is
\[
L_k
=
T((r_1,s_1),\ldots,(r_n,s_n),(p,kp+q)).
\]
By Lemma~\ref{lem:braid-index-positive-part},
\[
\operatorname{br}(L_k)=p.
\]
Therefore, Theorem~\ref{Thm:VolumeBoundAssociatedLorenzLink} gives
\[
\begin{aligned}
\operatorname{Vol}(K_{k,k'})
&\leq
12v_{\textup{tet}}
\bigl(
\operatorname{br}(L_k)^2
+
3\operatorname{br}(L_k)
\bigr) \\
&=
12v_{\textup{tet}}(p^2+3p).
\end{aligned}
\]
This bound is independent of both \(k\) and \(k'\).
\end{proof}

\begin{lemma}\label{lem:genus-positive-braid-knot}
Let \(K\) be a knot represented as the closure of a positive braid
\(\beta\) on \(p\) strands with \(c\) crossings. Then
\[
g(K)=\frac{c-p+1}{2}.
\]
\end{lemma}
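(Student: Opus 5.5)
The plan is to compute the genus through the Seifert algorithm applied to the closed positive braid diagram $\widehat\beta$, and to show that the resulting surface is genus-minimising.

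First, the Euler characteristic. The closed braid diagram of $\beta$ on $p$ strands has exactly $p$ Seifert circles, namely the $p$ concentric circles obtained by smoothing every crossing, and $c$ crossings. Each crossing contributes a half-twisted band, so the Seifert surface $S$ has
\[
\chi(S)=p-c .
\]
Since $K$ is a knot, $S$ has one boundary component, so $\chi(S)=1-2g(S)$, giving
\[
g(S)=\frac{c-p+1}{2}.
\]
This yields the upper bound $g(K)\leq (c-p+1)/2$. As a consistency check, $c-p+1$ is even because $\widehat\beta$ is a knot: the permutation of $\beta$ is a $p$-cycle, which forces $c\equiv p-1 \pmod 2$.

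Second, minimality, which is the main point. I would invoke the classical result that the Seifert algorithm applied to a positive diagram (more generally, a homogeneous diagram) produces a minimal-genus Seifert surface. This is due to Stallings and to Cromwell, and can also be obtained via Gabai's Murasugi-sum argument: closed positive braids are fibred, and the Seifert surface from the braid diagram is a fibre surface. Fibre surfaces are minimal genus, so $g(K)=g(S)$.

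An alternative route to minimality is degree counting. For a positive braid closure, the Alexander polynomial has degree span $c-p+1=2g(S)$, since $K$ is fibred with monic Alexander polynomial of that degree, and $2g(K)\geq \operatorname{span}\Delta_K$. Either way, the only nontrivial step is citing minimality/fibredness for positive braids; everything else is the Euler characteristic count above.
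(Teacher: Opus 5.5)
Your proposal is correct and follows essentially the same route as the paper: the Euler characteristic count $\chi(S)=p-c$ for the canonical Seifert surface of the closed braid, then minimality via Stallings' theorem that closures of positive (homogeneous) braids are fibred with this surface as fibre, together with the fact that fibre surfaces are genus-minimising. Your parity check and the Alexander-polynomial alternative are correct extras, but the paper does not need them.
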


\begin{proof}
Apply Seifert's algorithm to the standard closed braid diagram of
\(\beta\). The resulting canonical Seifert surface \(F\) is obtained from
\(p\) disks, corresponding to the \(p\) Seifert circles, by attaching
\(c\) positively twisted bands, one for each crossing. Therefore,
\[
\chi(F)=p-c.
\]

Since \(\partial F=K\) is a knot, the surface \(F\) is connected and has
one boundary component. Hence
\[
\chi(F)=1-2g(F).
\]
It follows that
\[
1-2g(F)=p-c,
\]
and therefore
\[
g(F)=\frac{c-p+1}{2}.
\]

It remains to show that \(F\) is genus-minimising. Since every positive
braid is homogeneous, Stallings' theorem~\cite{Genus} implies that the
closure of \(\beta\) is fibered and that its canonical Seifert surface
\(F\) is a fiber surface. A fiber surface of a knot is genus-minimising~\cite[Theorem 2.4 (Neuwirth-Stallings)]{Stoimenow:MinimalGenus}. 

Consequently,
\[
g(K)=g(F)=\frac{c-p+1}{2}. \qedhere
\]
\end{proof}

\begin{theorem}\label{thm:bounded-volume-unbounded-braid}
Let \(s_1,\ldots,s_n\) be positive integers, and let \(p,q\) be positive
coprime integers satisfying
\[
p>q\geq r_n>\cdots>r_1>1.
\]
Fix \(k\geq1\). Let \(\{p'_j\}_{j\geq1}\) be a sequence of positive
integers such that
\[
p'_j>p,
\qquad
\gcd(p,p'_j)=1,
\qquad\text{and}\qquad
p'_j\longrightarrow\infty.
\]
For each \(j\), choose \(k'_j\geq1\), and define
\[
K_j=
T((r_1,s_1),\ldots,(r_n,s_n),
(p,kp+q-1),(p'_j,k'_jp'_j+1)).
\]
Then each \(K_j\) is a hyperbolic Lorenz knot. Moreover,
\[
\operatorname{br}(K_j)=p'_j,
\qquad
\operatorname{lbi}(K_j)\leq p,
\]
and
\[
g(K_j)\geq
\frac{k'_jp'_j(p'_j-1)}{2}.
\]
Consequently,
\[
\operatorname{br}(K_j)\longrightarrow\infty
\qquad\text{and}\qquad
g(K_j)\longrightarrow\infty,
\]
whereas
\[
\operatorname{Vol}(K_j)
\leq
12v_{\mathrm{tet}}(p^2+3p)
\]
for every \(j\). In particular, the hyperbolic volumes of the knots
\(K_j\) are uniformly bounded.
\end{theorem}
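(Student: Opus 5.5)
The proof assembles results already established in this section; I will treat each claim of the statement in turn.

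First, $K_j$ is a hyperbolic Lorenz knot. It is the closure of the positive braid
\[
B_j=A(\sigma_1\cdots\sigma_{p-1})^{kp+q-1}(\sigma_1\cdots\sigma_{p'_j-1})^{k'_jp'_j+1},
\qquad
A=(\sigma_1\cdots\sigma_{r_1-1})^{s_1}\cdots(\sigma_1\cdots\sigma_{r_n-1})^{s_n}.
\]
Since $r_1<\cdots<r_n<p<p'_j$ and all exponents are positive, it is a T-link and hence a Lorenz link by Birman--Kofman. Two exponents need checking. The exponent $kp+q-1\ge p\ge 1$ is positive because $k\ge1$ and $q\ge1$. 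If $q=1$ the block is $(p,kp)$, which is still a positive exponent; the case $r_n=q=1$ is excluded by $r_n>1$, so in fact $q\geq 2$. The hypotheses $p'_j>p>q\ge r_n$, $\gcd(p,q)=1$, $\gcd(p,p'_j)=1$ and $k,k'_j\ge1$ are exactly those of Theorem~\ref{hyperbolic}, so $K_j$ is a hyperbolic knot.

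Second, the two braid-theoretic quantities. The braid $B_j$ is a positive braid on $p'_j$ strands containing $k'_j\ge1$ full twists on all strands. By Franks--Williams \cite[Corollary~2.4]{Franks-Williams:BraidsAndJonesPolynomial} (as in the proof of Theorem~\ref{hyperbolic}), $\operatorname{br}(K_j)=p'_j\to\infty$. For $\operatorname{lbi}$, I regard $K_j$ as the generalised T-link with final block $(r_{k+1},s_{k+1})=(p'_j,k'_jp'_j+1)$ and $d=0$. The exponent $s_{k+1}\notin\{-1,0,1\}$ and the $r$'s are strictly increasing, so this representation is valid. I then take $n=k'_j$, which satisfies $s_{k+1}\ge nr_{k+1}$. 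The associated Lorenz link is $T(\ldots,(p,kp+q-1),(p'_j,1))$, and by Lemma~\ref{lem:associated-lorenz-fixed} it equals $L_k=T((r_1,s_1),\ldots,(r_n,s_n),(p,kp+q))$ with braid index $p$. By Definition~\ref{Def:LorenzBraidIndex}, $\operatorname{lbi}(K_j)\le p$. Theorem~\ref{Thm:VolumeLorenzBraidIndex}, together with monotonicity of $x\mapsto x^2+3x$ for $x\ge0$, then gives $\operatorname{Vol}(K_j)\le 12v_{\mathrm{tet}}(p^2+3p)$. Alternatively, this bound follows directly from Corollary~\ref{cor:uniform-volume-bound}.

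Third, the genus. I apply Lemma~\ref{lem:genus-positive-braid-knot} to $B_j$, which has $p'_j$ strands and crossing number
\[
c_j=\sum_i s_i(r_i-1)+(kp+q-1)(p-1)+(k'_jp'_j+1)(p'_j-1).
\]
Then $2g(K_j)=c_j-p'_j+1$. Dropping the nonnegative terms $\sum s_i(r_i-1)+(kp+q-1)(p-1)$ leaves
\[
2g(K_j)\ge k'_jp'_j(p'_j-1)+(p'_j-1)-p'_j+1=k'_jp'_j(p'_j-1),
\]
which is the claimed bound. Since $k'_j\ge1$ and $p'_j\to\infty$, we get $g(K_j)\to\infty$.

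The only step needing care is the hyperbolicity check, since everything else is bookkeeping. The main point there is confirming that the block $(p,kp+q-1)$ is an admissible positive block, and that the representation used for $\operatorname{lbi}$ is a legitimate generalised T-link in the sense of Definition~\ref{Def:GeneralisedTLink}. Both are immediate from the inequalities above, so I do not expect a genuine obstacle.
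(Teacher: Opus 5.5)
Your proposal follows the paper's proof essentially step for step: Theorem~\ref{hyperbolic} gives hyperbolicity, positivity plus Birman--Kofman gives Lorenzness, Franks--Williams gives $\operatorname{br}(K_j)=p'_j$, Lemma~\ref{lem:associated-lorenz-fixed} gives $\operatorname{lbi}(K_j)\le p$, and the crossing count in Lemma~\ref{lem:genus-positive-braid-knot} gives the genus bound; your explicit choice $n=k'_j$ and your check that the generalised T-link representation is admissible are slightly more careful than the paper. One caveat applies equally to the paper: neither argument checks that $K_j$ has a single component, which Lemma~\ref{lem:genus-positive-braid-knot} (and the torus/satellite-knot case analysis behind Theorem~\ref{hyperbolic}) needs, and the hypotheses do not force it. For instance, $r_1=2$, $s_1=1$, $p=3$, $q=2$, $k=k'_j=1$, $p'_j=4$ gives the positive $4$-braid $\sigma_1(\sigma_1\sigma_2)^4(\sigma_1\sigma_2\sigma_3)^5$ with $24$ crossings, whose permutation is even and hence not a $4$-cycle, so its closure is a link with more than one component.
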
 

\begin{proof}
By Theorem~\ref{hyperbolic}, each \(K_j\) is hyperbolic. Moreover, all
the exponents in its generalised T-link representation are positive,
so \(K_j\) is a T-link. Since T-links and Lorenz links coincide,
each \(K_j\) is a Lorenz knot \cite{Birman-Kofman:NewTwistOnLorenzLinks}.

We first bound the Lorenz braid index. By
Lemma~\ref{lem:associated-lorenz-fixed}, the Lorenz link associated with
the given generalised T-link representation of \(K_j\), obtained by
removing the \(k'_j\) full twists on the \(p'_j\) strands, is
\[
L_k=
T((r_1,s_1),\ldots,(r_n,s_n),(p,kp+q)).
\]
This link is independent of \(j\). By
Lemma~\ref{lem:braid-index-positive-part},
\[
\operatorname{br}(L_k)=p.
\]
Therefore,
\[
\operatorname{lbi}(K_j)
\leq
\operatorname{br}(L_k)
=
p
\]
for every \(j\).

The volume bound in terms of the Lorenz braid index now gives
\[
\begin{aligned}
\operatorname{Vol}(K_j)
&\leq
12v_{\mathrm{tet}}
\bigl(
\operatorname{lbi}(K_j)^2+
3\operatorname{lbi}(K_j)
\bigr)\\
&\leq
12v_{\mathrm{tet}}(p^2+3p).
\end{aligned}
\]
Thus the volumes are bounded above by a constant independent of \(j\).

The knot \(K_j\) is represented by a positive braid on \(p'_j\) strands
containing \(k'_j\geq1\) full twists on all \(p'_j\) strands. Hence, by
\cite[Corollary~2.4]{Franks-Williams:BraidsAndJonesPolynomial},
\[
\operatorname{br}(K_j)=p'_j.
\]
Since \(p'_j\to\infty\), it follows that
\[
\operatorname{br}(K_j)\to\infty.
\]

Finally, let \(c_j\) be the number of crossings in the positive braid
defining \(K_j\). By
Lemma~\ref{lem:genus-positive-braid-knot},
\[
g(K_j)=\frac{c_j-p'_j+1}{2}.
\]
The final block
\[
(\sigma_1\cdots\sigma_{p'_j-1})^{k'_jp'_j+1}
\]
alone contributes
\[
(k'_jp'_j+1)(p'_j-1)
\]
crossings. Therefore,
\[
\begin{aligned}
g(K_j)
&\geq
\frac{(k'_jp'_j+1)(p'_j-1)-p'_j+1}{2}\\
&=
\frac{k'_jp'_j(p'_j-1)}{2}.
\end{aligned}
\]
Since \(k'_j\geq1\) and \(p'_j\to\infty\), we conclude that
\[
g(K_j)\to\infty.  \qedhere
\]
\end{proof}
 
Theorem~\ref{thm:bounded-volume-unbounded-braid} provides an explicit family
of Lorenz knots exhibiting the phenomenon studied by Purcell and
Zupan~\cite{PurcellZupan}: classical topological complexity can grow
arbitrarily large without forcing the hyperbolic volume to grow. In this
family, both the classical braid index and the Seifert genus tend to
infinity, whereas, for fixed \(k\), the associated Lorenz link remains
unchanged and has braid index \(p\). Consequently, the Lorenz braid indices
remain uniformly bounded, and so do the hyperbolic volumes. Thus, the
Lorenz braid index captures a form of geometric complexity relevant to
volume which is not reflected by the classical braid index or the Seifert
genus.

\bibliographystyle{amsplain}  
\bibliography{biblio_MathSciNet.bib} 
\end{document}